\documentclass[12pt]{amsart}
\usepackage{amsmath,amssymb,latexsym,cancel,rotating}
\usepackage{graphicx,amssymb,mathrsfs,amsmath,color,fancyhdr,amsthm}
\usepackage[all]{xy}
\usepackage{verbatim} 
\usepackage[all]{xy}
\usepackage{graphicx}
\usepackage{rotating} 
\usepackage{tikz-cd}

\newtheorem{theorem}{Theorem}[section]
\newtheorem{lemma}[theorem]{Lemma}
\newtheorem{corollary}[theorem]{Corollary}
\newtheorem{definition}[theorem]{Definition}
\newtheorem{proposition}[theorem]{Proposition}

\newtheorem{remark}[theorem]{Remark}

\newtheorem{example}[theorem]{Example}

  \def\leq{\leqslant}  \def\geq{\geqslant}

\begin{document}

\title[Notes on Chevalley Groups and Root Category III]
{Notes on Chevalley Groups and Root Category \uppercase\expandafter{\romannumeral3}: the Region of Total Positivity}

\author[Buyan Li]{Buyan Li}
\address{Department of Mathematical Sciences, Tsinghua University, Beijing 100084, P. R. China}
\email{liby21@mails.tsinghua.edu.cn}

\author[Jie Xiao]{Jie Xiao}
\address{School of Mathematical Sciences, Beijing Normal University, Beijing 100875, P. R. China}
\email{jxiao@bnu.edu.cn}

\subjclass[2000]{16G20, 20G20}


\keywords{root category, total positivity}

\bibliographystyle{abbrv}

\maketitle

\begin{abstract}
    In \cite{notes1}, we use the root categories to realize Chevalley groups.
    Lusztig's theory of total positivity for reductive groups can be naturally applied to Chevalley groups.
    In this paper, we explicitly determine regions of $\mathbb{R}_{>0}^t$ for describing the size of monoids of totally positive elements, with respect to the root subgroups corresponding to the indecomposable objects in the root category.
\end{abstract}

\setcounter{tocdepth}{2}\tableofcontents

\section{Introduction}

Chevalley groups are a family of simple groups over arbitrary fields, constructed from simple complex Lie algebras.
Ringel \cite{RINGEL1990137} realizes the positive part of a Lie algebra using the representation theory of finite dimensional associative hereditary representation-finite algebras, and then Peng and Xiao \cite{1997ROOT} realize the whole Lie algebra using the root category.
Based on their work, we construct the Chevalley groups from the root category in \cite{notes1}.

In \cite{TR1}, Lusztig generalizes the classical definition of totally positivity for $\operatorname{GL}_n(\mathbb{R})$ to any split reductive algebraic group over $\mathbb{R}$.
Although the definitions of monoids of totally positive elements are elementary, they yields nontrivial conclusions and connection with the canonical basis.
This theory can be naturally applied to Chevalley groups.
Since Lusztig's definitions and properties all make use of the root subgroups corresponding to simple roots, we would like to ask what do the elements look like regarding root subgroups corresponding to any root, or equivalently, any indecomposable objects in the root category.

In this paper, we firstly define some regions in $\mathbb{R}_{>0}^m$ which can be viewed as arising from the root category.
Using these regions and an ordering of indecomposable objects of a fixed complete hereditary subcategory of the root category, we characterize the size of the monoid of totally positive elements with respect to the root subgroups corresponding to these indecomposable objects.

\section{Preliminaries}

\subsection{Root Categories and Chevalley Groups}

In this subsection, we recall the construction and some  properties of Chevalley groups arising from the root category, and refer \cite{notes1} for details.

Let $A$ be a finite dimensional associative representation-finite hereditary algebra over some base field $k$, and $\operatorname{mod}A$ be the category of finite dimensional $A$-modules. Let $D^b(A)$ be the bounded derived category of $\operatorname{mod}A$, which is a triangulated category with translantion functor $T$. The root category $\mathcal{R}$ of $A$ is the orbit category $D^b(A)/T^2$, which is also a triangulated category (see \cite{1997ROOT} or \cite{Keller}). 
Let $\operatorname{ind}\mathcal{R}$ be the set of all the indecomposable objects of $\mathcal{R}$. For any object $M\in \mathcal{R}$, we denote its isomorphism class by $[M]$.

Let $\mathcal{K}$ be the Grothendieck group of $\mathcal{R}$, i.e. an abelian group defined by generators $H_{[M]}$, for all $M\in \mathcal{R}$, and subject to relations $H_{[X]}+H_{[Z]}=H_{[Y]}$, if there exists a triangle $X\rightarrow Y\rightarrow Z \rightarrow TX$ in $\mathcal{R}$.
Let $\mathcal{N}$ be a free abelian group with a basis $\{u_{[X]}\}_{X\in \operatorname{ind}\mathcal{R}}$. For simplicity, we write $H_M$ instead of $H_{[M]}$, $u_X$ instead of $u_{[X]}$.

Denote the symmetric Euler form of $\mathcal{R}$ by $(-|-):\mathcal{K}\times \mathcal{K} \rightarrow \mathbb{Z}$. 
Define $d(X)=\operatorname{dim}_k \operatorname{End}_{\mathcal{R}}X$ for any $X\in \operatorname{ind}\mathcal{R}$.
Note that $A_{XY}=\frac{(H_X|H_Y)}{d(X)}$ is an integer.

For any $X,Y,L\in \operatorname{ind}\mathcal{R}$, let $\gamma_{XY}^L=\varphi_{XY}^L(1)-\varphi_{YX}^L(1)$ be the evaluation of the Hall polynomials in $\mathcal{R}$.
All the possible values of $\gamma_{XY}^L$ can be calculated by Ringel's result in \cite{RINGEL1990137}.
Using these to define structure constants, Peng and Xiao \cite{1997ROOT} realized the whole simple Lie algebra $\mathfrak{g}_{\mathbb{C}}=(\mathcal{K}\oplus \mathcal{N})\otimes \mathbb{C}$ corresponding to the root system of $\mathcal{R}$, as well as its $\mathbb{Z}$-form $\mathfrak{g}_{\mathbb{Z}}$.
Since the structure constants of $\mathfrak{g}_{\mathbb{Z}}$ are integers, one can construct Chevalley group as follows (see the definition for Chevalley group also in Carter \cite{carter}, Steinberg \cite{Steinberg}, Geck \cite{geck}).

For any $X\in \operatorname{ind}\mathcal{R}$ and $t\in \mathbb{C}$, define $E_{[X]}(t)=exp(t \operatorname{ad}u_X)$.
Since the coefficients of the action of $E_{[X]}(t)$ on a basis of $\mathfrak{g}_{\mathbb{Z}}$ are all integers, we can extend the definition of $E_{[X]}(t)$ to any field $\mathbb{K}$.
Let $\mathfrak{g}_{\mathbb{K}}=\mathfrak{g}_{\mathbb{Z}}\otimes \mathbb{K}$, and $\mathbf{G}=\mathbf{G}(\mathcal{R},\mathbb{K})$ be the subgroup of $\operatorname{Aut}(\mathfrak{g}_{\mathbb{K}})$ generated by $E_{[X]}(t)$, for all $X\in \operatorname{ind}\mathcal{R}$ and $t\in \mathbb{K}$.
This group is called the Chevalley group of the root category $\mathcal{R}$ over field $\mathbb{K}$.

For $X,Y\in \operatorname{ind}\mathcal{R}$ such that $X\ncong Y$ and $X \ncong TY$, integers $i,j\geq 0$, we inductively define isomorphism classes $[L_{X,Y,i,j}]$ (and their representatives $L_{X,Y,i,j}$) as follows.
Let $[L_{X,Y,1,0}]=[X]$ and $[L_{X,Y,0,1}]=[Y]$. 
If $[L_{X,Y,i,j}]$ is defined, and $L_{X,Y,i,j}$ and $X$ have indecomposable extension, then let $[L_{X,Y,i+1,j}]$ be the isomorphism class of this extension. 
Otherwise, we say $[L_{X,Y,i+1,j}]$ doesn't exist.
Similarly, if  $L_{X,Y,i,j}$ and $Y$ have indecomposable extension, then let $[L_{X,Y,i,j+1}]$ be the isomorphism class of this extension.
Otherwise, we say $[L_{X,Y,i,j+1}]$ doesn't exist.
Let $p_{XY}$ be the largest integer $r$ such that $[L_{TX,Y,r,1}]$ exists, and let $q_{XY}$ be the largest integer $s$ such that $[L_{X,Y,s,1}]$ exists.

For $X,Y\in \operatorname{ind}\mathcal{R}$ such that $X\ncong Y$ and $X \ncong TY$, $ t,s\in \mathbb{K}$, denote the commutator of $E_{[X]}(t)$ and $E_{[Y]}(s)$ by
\begin{align*}
    (E_{[X]}(t),E_{[Y]}(s)) = E_{[X]}(t) E_{[Y]}(s) E_{[X]}(t)^{-1} E_{[Y]}(s)^{-1} .
\end{align*}

\begin{proposition}\cite[Proposition 4.2]{notes1}\label{commutator}
    For $X,Y\in \operatorname{ind}\mathcal{R}$ such that $X\ncong Y$ and $X \ncong TY$, $ t,s\in \mathbb{K}$, we have
    \begin{equation*}
        (E_{[X]}(t),E_{[Y]}(s))= \prod\limits_{i,j>0} E_{[L_{X,Y,i,j}]}(C_{X,Y,i,j}t^i s^j),
    \end{equation*}
where the product is taken over all pairs of $i,j>0$ such that $[L_{X,Y,i,j}]$ exists, and in order of increasing $i+j$.

The constants $C_{X,Y,i,j}$ are given as follows (denote $L_{X,Y,i,j}$ simply by $L_{i,j}$)
\begin{align*}
    &C_{X,Y,1,1} = \gamma_{XY}^{L_{1,1}}, \qquad C_{X,Y,2,1} = \frac{1}{2!}\gamma_{XY}^{L_{1,1}}\gamma_{XL_{1,1}}^{L_{2,1}}, \qquad C_{X,Y,1,2} = -\frac{1}{2!}\gamma_{YX}^{L_{1,1}}\gamma_{YL_{1,1}}^{L_{1,2}}, \\
    &C_{X,Y,3,1} = \frac{1}{3!}\gamma_{XY}^{L_{1,1}}\gamma_{XL_{1,1}}^{L_{2,1}}\gamma_{XL_{2,1}}^{L_{3,1}},\qquad C_{X,Y,3,2} = \frac{1}{3} \gamma_{XY}^{L_{1,1}} \gamma_{XL_{1,1}}^{L_{2,1}} \gamma_{XL_{2,1}}^{L_{3,1}} \gamma_{YL_{3,1}}^{L_{3,2}},\\
    &C_{X,Y,1,3} = - \frac{1}{3!}\gamma_{YX}^{L_{1,1}}\gamma_{YL_{1,1}}^{L_{1,2}}\gamma_{YL_{1,2}}^{L_{1,3}} ,\qquad C_{X,Y,2,3} = \frac{1}{6} \gamma_{YX}^{L_{1,1}} \gamma_{YL_{1,1}}^{L_{1,2}} \gamma_{YL_{1,2}}^{L_{1,3}} \gamma_{XL_{1,3}}^{L_{2,3}}  . 
\end{align*}

\end{proposition}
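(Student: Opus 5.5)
The plan is to reduce the statement to the classical Chevalley commutator formula. Use the Peng--Xiao identification of $\mathfrak{g}_{\mathbb{Z}}$ with a Chevalley $\mathbb{Z}$-form, in which $u_X$ is the root vector $e_\alpha$ with $\alpha=\underline{\dim}\,X$ in the root system of $\mathcal{R}$. Then read off the structure constants $N_{\alpha,\beta}$ as the Hall numbers $\gamma_{XY}^L$. The hypotheses $X\ncong Y$ and $X\ncong TY$ say exactly that $\alpha\neq\pm\beta$, where $\beta$ is the root of $Y$. The key translation is that $[L_{X,Y,i,j}]$ exists precisely when $i\alpha+j\beta$ is a root. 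This holds because $[u_{L},u_X]=\gamma_{LX}^{L'}u_{L'}$ with $\gamma\neq 0$ exactly when $L$ and $X$ have an indecomposable extension $L'$ (Ringel's computation of the possible $\gamma$'s). In that case $L_{X,Y,i,j}$ is the indecomposable object whose class is $i\alpha+j\beta$.

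Given this dictionary, I would prove the formula directly by the standard Steinberg/Carter argument. Work in $\operatorname{End}(\mathfrak{g}_{\mathbb{Q}}[t,s])$, i.e.\ over the polynomial ring, and then specialize $t,s$ to $\mathbb{K}$. This is legitimate because all coefficients lie in $\mathbb{Z}$: the constants $C$ are integers, since the $\gamma$'s have the required divisibility, as in Carter. By the Baker--Campbell--Hausdorff-type identity
\[
E_{[X]}(t)\,\operatorname{ad}u_Y\,E_{[X]}(t)^{-1}=\exp(t\operatorname{ad}u_X)(\operatorname{ad} u_Y)=\sum_{i\ge0}\frac{t^i}{i!}(\operatorname{ad}u_X)^i\operatorname{ad}u_Y,
\]
we get $E_{[X]}(t)E_{[Y]}(s)E_{[X]}(t)^{-1}=\exp\big(s\sum_i \frac{t^i}{i!}\operatorname{ad}(\operatorname{ad}u_X)^i u_Y\big)$. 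Here $(\operatorname{ad}u_X)^iu_Y=\gamma_{XY}^{L_{1,1}}\gamma_{XL_{1,1}}^{L_{2,1}}\cdots u_{L_{i,1}}$.

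Next, the set of roots $\{i\alpha+j\beta: i,j>0\}$ spans a rank two subsystem of type $A_2$, $B_2$ or $G_2$. The elements $u_{L_{i,j}}$ span a nilpotent Lie algebra, and the corresponding unipotent group has a unique factorization as an ordered product of root elements in increasing height $i+j$. I would write $\exp(sY')E_{[Y]}(s)^{-1}$, where $Y'$ is the conjugated element above, as such an ordered product with unknown coefficients $C_{i,j}t^is^j$. The coefficients are then determined by comparing the terms of each bidegree $(i,j)$, working up in $i+j$. Homogeneity in $(t,s)$ forces each coefficient to be a monomial $t^is^j$. Equivalently, it suffices to match the formula with Carter's Theorem 5.2.2: since $N_{\alpha,\beta}=\gamma_{XY}^{L_{1,1}}$, his constants $C_{ij\alpha\beta}$, namely $M_{\alpha\beta i}=\frac1{i!}N_{\alpha\beta}N_{\alpha,\alpha+\beta}\cdots$ for $C_{i1}$, $(-1)^jM_{\beta\alpha j}$ for $C_{1j}$, and $\frac13M_{\alpha+\beta,\alpha,2}$ and $-\frac23M_{\beta+\alpha,\beta,2}$ for the $G_2$ terms, translate term by term. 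Antisymmetry $\gamma_{XY}^L=-\gamma_{YX}^L$ turns the sign $(-1)^j$ into the stated expressions.

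The main obstacle is the $G_2$ constants $C_{3,2}$ and $C_{2,3}$. There one must rewrite Carter's $\frac13 N_{\alpha+\beta,\alpha}\cdots$ as products along the specific paths $X,X,X,Y$ and $Y,Y,Y,X$ used in the statement. I would handle this by the Jacobi identity in $\mathfrak{g}_{\mathbb{Z}}$, expressed as Hall-number identities (associativity of the Ringel--Hall Lie bracket), and by an explicit check in the $G_2$ root category with its small number of indecomposables. In parallel I would verify the normalization $d(X)$ and $A_{XY}$ against the Chevalley basis convention $N_{\alpha,\beta}=\pm(p+1)$, where $p_{XY}$ plays the role of $p$.
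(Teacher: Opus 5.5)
Your overall route is sound; the paper itself gives no argument and just quotes the result from \cite{notes1}. The route is: identify $u_X$ with a root vector for $\xi=[X]$, read off $N_{\xi\eta}=\gamma_{XY}^{L_{1,1}}$ with $\eta=[Y]$, note that $L_{X,Y,i,j}$ is the indecomposable of class $i\xi+j\eta$, and get the constants from the Chevalley commutator computation. The step that fails as written is the ``term by term'' matching with Carter's Theorem 5.2.2, and that is exactly where the explicit constants come from. Carter's convention is $[x,y]=x^{-1}y^{-1}xy$, and his formula is for $[x_\beta(u),x_\alpha(t)]$, so
\[
(E_{[X]}(t),E_{[Y]}(s))=E_{[X]}(t)E_{[Y]}(s)E_{[X]}(-t)E_{[Y]}(-s)=[x_\xi(-t),x_\eta(-s)]_{\mathrm{Carter}} .
\]
Thus $X$ sits in Carter's $\beta$-slot, not his $\alpha$-slot. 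The correct dictionary is $C_{X,Y,a,b}=(-1)^a C_{ba\eta\xi}$, with Carter's $\alpha:=\eta$ and $\beta:=\xi$. This gives $C_{X,Y,i,1}=M_{\xi\eta i}$ and $C_{X,Y,1,j}=-M_{\eta\xi j}$ directly, with no antisymmetry involved. With your identification ($\alpha\leftrightarrow X$, Carter's $C_{1j}\leftrightarrow C_{X,Y,1,j}$) you get $+M_{\eta\xi 2}$ against the stated $-M_{\eta\xi 2}$. No use of $\gamma_{XY}^L=-\gamma_{YX}^L$ fixes this, because the two left-hand sides are commutators taken in opposite orders, and inverting an ordered product reverses it.

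For $G_2$ the slot matters more: the top coefficient is $\pm 2$ or $\pm 1$ according to whether the short root is in the first or second slot.
\begin{itemize}
\item Under the correct dictionary, $C_{X,Y,3,2}$ ($X$ short) comes from Carter's $C_{23}=-\frac23 M_{\beta+\alpha,\beta,2}$ and equals $\frac23M_{\xi+\eta,\xi,2}=\frac13N_{\xi+\eta,\xi}N_{\xi+\eta,2\xi+\eta}$. This is the expression you quote, but your list pairs it with Carter's $C_{32}=\frac13M_{\alpha+\beta,\alpha,2}=\frac16N_{\alpha+\beta,\alpha}N_{\alpha+\beta,2\alpha+\beta}$, which is off by a factor $2$.
\item $C_{X,Y,2,3}$ ($Y$ short) comes from Carter's $C_{32}$ and equals $\frac16 N_{\xi+\eta,\eta}N_{\xi+\eta,\xi+2\eta}$.
\item The Jacobi identity $N_{\xi\eta}N_{\xi+\eta,2\xi+\eta}=-N_{\xi,2\xi+\eta}N_{\eta,3\xi+\eta}$, together with $N_{\xi\eta}^2=1$, then gives the path form; the $(2,3)$ case is symmetric.
\end{itemize}
Your direct route is actually cleaner and needs no Chevalley normalization. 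For $X$ short, $u_Y$ and the $u_{L_{i,j}}$ span a two-step nilpotent algebra, so $\exp A\exp B=\exp(A+B+\frac12[A,B])$ gives $C_{X,Y,3,2}=\frac12(\frac16+\frac12)P=\frac13P$, where $P$ is the path product in the statement. For $Y$ short, write the commutator as $E_{[X]}(t)\cdot E_{[Y]}(s)E_{[X]}(-t)E_{[Y]}(-s)$. Then $u_X$ and the $u_{L_{i,j}}$ span a two-step nilpotent algebra, and you get $-\frac12(\frac16-\frac12)P=\frac16P$.

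A minor slip: $E_{[X]}(t)\,\operatorname{ad}u_Y\,E_{[X]}(t)^{-1}=\operatorname{ad}(E_{[X]}(t)u_Y)$, not the composite $\sum_i\frac{t^i}{i!}(\operatorname{ad}u_X)^i\operatorname{ad}u_Y$. Your next formula is the correct one.
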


Define automorphism $h_{[X]}(t)$ of $\mathfrak{g}_{\mathbb{K}}$ for each $X\in \operatorname{ind}\mathcal{R}$ and $t\in \mathbb{K}^{\times}$ by the action
\begin{align*}
    &h_{[X]}(t)u_{Y} = t^{A_{XY}} u_{Y}, \\
    &h_{[X]}(t)H_{S} = H_{S}.
\end{align*}

For $X\in \operatorname{ind}\mathcal{R}$, we use BGP reflection functor to define automorphism $n_{[X]}$ of $\mathfrak{g}_{\mathbb{K}}$. 
Precisely, for any indecomposable object $X$ in $\mathcal{R}$, let $\mathcal{A}$ be a complete hereditary subcategory of $\mathcal{R}$ such that $X$ is projective and simple in $\mathcal{A}$.
Suppose $X$ corresponds to sink $i$ in the corresponding quiver, and denote the BGP reflection functor by $R(S_i^+)$.
Let $\mathcal{A}'=R(S_i^+)\mathcal{A}$.
There exists a unique Lie algebra isomorphism $\phi_{\mathcal{A}}:\mathfrak{g}_{\mathbb{K}}\rightarrow \mathfrak{g}(\Phi(\mathcal{R}))_{\mathbb{K}}$, where $\mathfrak{g}(\Phi(\mathcal{R}))_{\mathbb{K}}=\mathbb{K}\otimes \mathfrak{g}(\Phi(\mathcal{R}))_{\mathbb{Z}}$ and $\mathfrak{g}(\Phi(\mathcal{R}))_{\mathbb{Z}}$ is the $\mathbb{Z}$-span of a Chevalley basis of $\mathfrak{g}(\Phi(\mathcal{R}))$.
Define
\begin{align*}
    n_{[X]}=\phi_{\mathcal{A}'}^{-1}\phi_{\mathcal{A}}:\mathfrak{g}_{\mathbb{K}}\rightarrow \mathfrak{g}_{\mathbb{K}}.
\end{align*}
We can prove that $n_{[X]}=E_{[X]}(1)E_{[TX]}(1)E_{[X]}(1)$.
For any $t\in \mathbb{K}^{\times}$ and $X\in \operatorname{ind}\mathcal{R}$, define $n_{[X]}(t)=h_{[X]}(t)n_{[X]}$. Then we have $n_{[X]}(t)=E_{[X]}(t) E_{[TX]}(t^{-1}) E_{[X]}(t)$, and thus $n_{[X]}(t),h_{[X]}(t)\in \mathbf{G}$.

For each $M,N\in \operatorname{ind}\mathcal{R}$, we introduce a symbol $\omega_M(N)$. 
If $M\cong N$ or $M\cong TN$, let $\omega_M(N)=TN$.
Otherwise, let 
\begin{equation*}
    \omega_M(N) = 
    \begin{cases}
        L_{TM,N,p_{MN}-q_{MN},1}  &  \text{if } p_{MN}-q_{MN} >0 \\
        L_{M,N,q_{MN}-p_{MN},1}  &  \text{if } p_{MN}-q_{MN} \leqslant 0
    \end{cases}
    .
\end{equation*}

Using this symbol, we have
\begin{align*}
    n_{[X]}u_Y=\eta_{XY}u_{\omega_X(Y)},
\end{align*}
for all $X,Y\in \operatorname{ind}\mathcal{R}$, where $\eta_{XY}=\pm 1$ is a product of some $\gamma$'s and some rational number, which can be calculated case by case.
Let $\eta_{XX}=\eta_{X,TX}=1$.
Note that for any $X,Y\in \operatorname{ind}\mathcal{R}$, we have $\eta_{X,TY}=\eta_{XY}$ and $\eta_{XY}\eta_{X,\omega_X(Y)}=(-1)^{A_{XY}}$.

Let $\mathbf{H}$ be the subgroup of $\mathbf{G}$ generated by $h_{[M]}(t)$, for all $M \in \operatorname{ind}\mathcal{R}, t \in \mathbb{K}^{\times}$, and let $\mathbf{N}$ be the subgroup of $\mathbf{G}$ generated by $\mathbf{H}$ and $n_{[M]},\forall M \in \operatorname{ind}\mathcal{R}$.
We have $\mathbf{H}$ is a normal subgroup of $\mathbf{N}$, and $\mathbf{N}/\mathbf{H}\cong \mathbf{W}$, where $\mathbf{W}$ is the Weyl group associated to the root system $\Phi$ of $\mathcal{R}$.

Then we summarize six conjugate relations as follows, which are useful in the calculations.

\begin{lemma}
    \cite[Lemma 4.7-4.12]{notes1} \label{conjugate}

    For any $ X,Y \in \operatorname{ind}\mathcal{R},  t \in \mathbb{K}^{\times}, s \in \mathbb{K}$,

    (1) $ n_{[X]}(t) E_{[Y]}(s) n_{[X]}(t)^{-1} = E_{[\omega_X(Y)]}(\eta_{XY}t^{-A_{XY}}s)$;

    (2) $h_{[X]}(t) E_{[Y]}(s) h_{[X]}(t)^{-1} = E_{[Y]}(t^{A_{XY}}s).$

    For any $X,Y \in \operatorname{ind}\mathcal{R},  t,s \in \mathbb{K}^{\times}$,

    (3) $ n_{[X]}(t) n_{[Y]}(s) n_{[X]}(t)^{-1} = n_{[\omega_X(Y)]}(\eta_{XY}t^{-A_{XY}}s) $;

    (4) $ n_{[X]}(t) h_{[Y]}(s) n_{[X]}(t)^{-1} = h_{[\omega_X(Y)]}(s)$;

    (5) $  h_{[X]}(t) h_{[Y]}(s) h_{[X]}(t)^{-1} = h_{[Y]}(s)$;

    (6) $ h_{[X]}(t) n_{[Y]}(s) h_{[X]}(t)^{-1} = n_{[Y]}(t^{A_{XY}}s).$
\end{lemma}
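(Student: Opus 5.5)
The six relations all reduce to how the automorphisms $n_{[X]}(t)$ and $h_{[X]}(t)$ act on $\mathfrak{g}_{\mathbb{K}}$, together with the principle that conjugating an exponential by an automorphism gives the exponential of the transported element. Precisely, if $g$ is a Lie algebra automorphism of $\mathfrak{g}_{\mathbb{K}}$ and $x$ is ad-nilpotent, then $g\,\mathrm{ad}(x)\,g^{-1}=\mathrm{ad}(gx)$. Hence
\[
g\exp(\mathrm{ad}\,x)g^{-1}=\exp(\mathrm{ad}(gx)).
\]
Over a general field $\mathbb{K}$ I will first verify the identities over $\mathbb{Z}[t,t^{-1},s]$, where the integrality of the divided powers is already known, and then specialize. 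So every relation follows once I compute $g u_Y$ for the relevant $g$.

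For (2): by definition $h_{[X]}(t)u_Y=t^{A_{XY}}u_Y$, and $h_{[X]}(t)$ is an automorphism, because it is the torus action attached to the grading of $\mathfrak{g}$ by $\mathcal{K}$. Therefore
\[
h_{[X]}(t)E_{[Y]}(s)h_{[X]}(t)^{-1}=\exp\bigl(s\,\mathrm{ad}(t^{A_{XY}}u_Y)\bigr)=E_{[Y]}(t^{A_{XY}}s).
\]

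For (1): $n_{[X]}(t)=h_{[X]}(t)n_{[X]}$ is an automorphism, and $n_{[X]}u_Y=\eta_{XY}u_{\omega_X(Y)}$. Hence $n_{[X]}(t)u_Y=\eta_{XY}t^{A_{X,\omega_X(Y)}}u_{\omega_X(Y)}$. The key numerical point is $A_{X,\omega_X(Y)}=-A_{XY}$. This holds because $\omega_X$ realizes the reflection $s_X$ on classes in $\mathcal{K}$, and
\[
(H_X\,|\,s_X H_Y)=(s_XH_X\,|\,H_Y)=-(H_X\,|\,H_Y).
\]
I will check this either from the definition of $\omega_X(Y)$ via $p_{XY}-q_{XY}$, which gives $H_{\omega_X(Y)}=H_Y-A_{XY}'H_X$ up to sign conventions, or directly from the BGP description of $n_{[X]}$. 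The degenerate cases $Y\cong X$ and $Y\cong TX$ use $\omega_X(Y)=TY$ and $A_{X,TY}=-A_{XY}$. This gives (1).

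For (3) I use $n_{[Y]}(s)=E_{[Y]}(s)E_{[TY]}(s^{-1})E_{[Y]}(s)$ and apply (1) to each factor. The result is
\[
E_{[\omega_X(Y)]}(c s)\,E_{[\omega_X(TY)]}(c' s^{-1})\,E_{[\omega_X(Y)]}(c s),\qquad c=\eta_{XY}t^{-A_{XY}}.
\]
To identify this with $n_{[\omega_X(Y)]}(cs)$, I need three facts:
\begin{itemize}
\item $\omega_X(TY)=T\omega_X(Y)$, which follows from $T$ commuting with the reflection, since $H_{TY}=-H_Y$;
\item $\eta_{X,TY}=\eta_{XY}$, which is given;
\item $A_{X,TY}=-A_{XY}$,
\end{itemize}
These give $c'=\eta_{XY}t^{A_{XY}}=c^{-1}$, using $\eta^2=1$.

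For (4) I write $h_{[Y]}(s)=n_{[Y]}(s)n_{[Y]}(1)^{-1}$ and apply (3) twice; the scalars $c$ cancel. Alternatively, I compare actions directly: both sides act on $u_Z$ by a power of $s$, and the exponents agree by $W$-invariance of $(-|-)$.

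For (5): the $h$'s act diagonally on the basis, so they commute.

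For (6): by (2) and the expression of $n_{[Y]}(s)$ as a product of three $E$'s, the middle factor becomes $E_{[TY]}(t^{A_{X,TY}}s^{-1})=E_{[TY]}((t^{A_{XY}}s)^{-1})$.

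The main obstacle is the precise bookkeeping in (1): showing $A_{X,\omega_X(Y)}=-A_{XY}$ and that the sign is exactly $\eta_{XY}$. This requires confirming that $\omega_X$ acts as the Weyl reflection on $\mathcal{K}$ in every case of the $L_{X,Y,i,j}$ strings, including the non-simply-laced normalization by $d(X)$. I would first try to prove it uniformly via $\phi_{\mathcal{A}}$ and the BGP reflection, which is known to induce $s_i$ on dimension vectors.
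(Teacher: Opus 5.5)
Your proof is correct. This paper does not prove the lemma; it quotes it from \cite[Lemmas 4.7--4.12]{notes1}, so there is no in-paper argument to compare against.

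Your route is the standard Chevalley-group argument:
\begin{itemize}
\item transport $\exp(s\,\mathrm{ad}\,u_Y)$ under the automorphisms $h_{[X]}(t)$ and $n_{[X]}(t)=h_{[X]}(t)n_{[X]}$ to get (1) and (2);
\item obtain (3) and (6) from the factorization $n_{[Y]}(s)=E_{[Y]}(s)E_{[TY]}(s^{-1})E_{[Y]}(s)$;
\item obtain (4) from $h_{[Y]}(s)=n_{[Y]}(s)n_{[Y]}^{-1}$ together with $h_{[Z]}(cs)h_{[Z]}(c)^{-1}=h_{[Z]}(s)$;
\item obtain (5) from the fact that the $h$'s are diagonal in the basis $\{u_Z\}\cup\{H_S\}$.
\end{itemize}

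The step you flag as the main obstacle is in fact immediate, and needs no BGP argument or special treatment of non-simply-laced normalizations.
\begin{itemize}
\item The sign needs no checking: $\eta_{XY}$ is by definition the scalar in $n_{[X]}u_Y=\eta_{XY}u_{\omega_X(Y)}$.
\item In both cases of the definition of $\omega_X(Y)$, one reads off $H_{\omega_X(Y)}=H_Y-(p_{XY}-q_{XY})H_X$.
\item The root-string identity gives $p_{XY}-q_{XY}=2(H_X|H_Y)/(H_X|H_X)=A_{XY}$, because $(H_X|H_X)=2\dim\operatorname{End}_{\mathcal{R}}X=2d(X)$ for an indecomposable (hence exceptional) $X$.
\item Therefore $A_{X,\omega_X(Y)}=A_{XY}-2A_{XY}=-A_{XY}$. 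The degenerate cases $Y\cong X$ and $Y\cong TX$ are covered by $H_{TY}=-H_Y$, as you note.
\end{itemize}

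Similarly, $\omega_X(TY)=T\omega_X(Y)$, which you need in (3), holds because an indecomposable object of $\mathcal{R}$ is determined by its class in $\mathcal{K}$ and $s_X$ is linear. The paper itself recalls this fact later, in the proof of the description of $\mathbf{U}^-_{>0}$.

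Your specialization from $\mathbb{Z}[t,t^{-1},s]$ to an arbitrary $\mathbb{K}$ is the right way to make the identity $g\exp(\mathrm{ad}\,x)g^{-1}=\exp(\mathrm{ad}(gx))$ legitimate in positive characteristic.
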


Now we arbitrarily fix a complete section of the root category $\mathcal{R}$, i.e. a connected full subquiver of the AR-quiver of $\mathcal{R}$, which contains one vertex for each $\tau$-orbit. Then we get a complete hereditary subcategory $\mathcal{B}$ of $\mathcal{R}$.
We fix a set of representatives $\{S_1,\cdots,S_n\}$ of isomorphism classes of simple objects in $\mathcal{B}$.

Note that $\mathbf{H}$ is an abelian subgroup of $\mathbf{G}$, and each element in $\mathbf{H}$ is of the form $\prod\limits_{i=1}^n h_{[S_i]}(t_i)$, $t_i\in \mathbb{K}^{\times}$.
Moreover, we have 
 $ \prod\limits_{i=1}^n h_{[S_i]}(t_i) =1$ if and only if 
       $ (t_1,\cdots,t_n) \in (\mathbb{K}^{\times})^n$ satisfies $\prod\limits_{i=1}^n t_i^{a_{ij}} = 1$, for all $ j=1,\cdots,n$,
    where $(a_{ij})$ is the Cartan matrix.

Let $\mathbf{U}^+$ (resp. $\mathbf{U}^-$) be the subgroup of $\mathbf{G}$ generated by $E_{[X]}(t)$, for all $t\in \mathbb{K}$ and $X\in \operatorname{ind}\mathcal{B}$ (resp. $X\in \operatorname{ind}T\mathcal{B}$).
Let $\mathbf{B}^+$ (resp. $\mathbf{B}^-$) be the subgroup of $\mathbf{G}$ generated by $\mathbf{H}$ and $\mathbf{U}^+$ (resp. $\mathbf{U}^-$).

\begin{proposition}
    Every element of $\mathbf{U}^+$ can be uniquely expressed in the form 
    \begin{align*}
        \prod\limits_{M\in \operatorname{ind}\mathcal{B}} E_{[M]}(t_M),
    \end{align*}
    where the product is taken over all indecomposable objects in $\mathcal{B}$ in increasing order (with respect to an order that is compatible with the length).
    Similar for $\mathbf{U}^-$.
\end{proposition}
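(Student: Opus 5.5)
Both existence and uniqueness will be proved by the standard filtration argument (as in Carter's treatment of $U$), built from the commutator formula of Proposition \ref{commutator}. For $X\in\operatorname{ind}\mathcal{B}$ let $\ell(X)$ be the length of $X$ in $\mathcal{B}$, i.e.\ the height of the corresponding positive root $\dim X$. Fix a total order on $\operatorname{ind}\mathcal{B}$ refining $\ell$. For $m\geq 1$ let $\mathbf{U}_m$ be the subgroup generated by the $E_{[M]}(t)$ with $\ell(M)\geq m$.

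The first point is that if $X,Y\in\operatorname{ind}\mathcal{B}$ are nonisomorphic, then every $L_{X,Y,i,j}$ with $i,j>0$ lies in $\operatorname{ind}\mathcal{B}$. Its class in $\mathcal{K}$ is $iH_X+jH_Y$, which is a positive root. Since $\mathcal{B}$ is a complete hereditary subcategory, the indecomposables of $\mathcal{R}$ with positive dimension vector are exactly those of $\mathcal{B}$. It follows that $\ell(L_{X,Y,i,j})=i\ell(X)+j\ell(Y)>\max(\ell(X),\ell(Y))$. Also $X\ncong TY$ holds automatically, because $TY\in T\mathcal{B}$.

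Hence Proposition \ref{commutator} gives $(\mathbf{U}_a,\mathbf{U}_b)\subseteq \mathbf{U}_{a+b}$ on generators, and in particular each $\mathbf{U}_m$ is normal in $\mathbf{U}^+=\mathbf{U}_1$. Moreover $E_{[M]}(t)E_{[M]}(s)=E_{[M]}(t+s)$, since $\operatorname{ad}u_M$ commutes with itself. Therefore $\mathbf{U}_m/\mathbf{U}_{m+1}$ is abelian, and it is generated by the images of the $E_{[M]}(t)$ with $\ell(M)=m$.

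Existence then follows by a collection process, by descending induction on $m$. I will show that every element of $\mathbf{U}_m$ is an ordered product $\prod_{\ell(M)\geq m}E_{[M]}(t_M)$. Take a word in the generators. Using the commutator formula, move every factor $E_{[M]}(t)$ with $\ell(M)=m$ to the left in the prescribed order. Each transposition introduces only factors lying in $\mathbf{U}_{m+1}$. Factors with the same $M$ are merged by additivity. The remainder lies in $\mathbf{U}_{m+1}$, where induction applies. The base case is $m$ greater than the largest root height, where $\mathbf{U}_m=1$.

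Uniqueness is the step I expect to be the real obstacle, since one needs an injectivity statement and not merely a rewriting. I would first try evaluating on the Lie algebra. Order the basis of $\mathfrak{g}_{\mathbb{K}}$ by height of the root, with $\mathcal{K}$ at height $0$ and $T\mathcal{B}$ negative. Each $E_{[M]}(t)$ is unipotent and raises height. Apply the product $g=\prod_M E_{[M]}(t_M)$ to a suitable vector: for instance, to $u_{TS}$ with $S$ simple, or to the $H$'s. Then compare the components of $g(H_{S_i})$ in height $m$ directly, namely $-\sum_{\ell(M)=m}t_M A_{M S_i}d(M)u_M$ plus terms coming from the already-determined $t$'s of smaller height. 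This recovers the $t_M$ inductively, provided the vectors $(A_{MS_i})_i$ suffice to separate distinct $M$ of a fixed height. They do, since the $H_M$ are distinct and the form is nondegenerate, at least when the characteristic is not too bad.

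If characteristic issues arise, I would fall back on the equivalent filtration statement. The map $\mathbb{K}^{\#\{\ell(M)=m\}}\to\mathbf{U}_m/\mathbf{U}_{m+1}$ is an isomorphism, which one checks via the action on $\mathfrak{g}_{\mathbb{K}}$ modulo higher terms: $E_{[M]}(t)u_{TS}$ has the term $t\,[u_M,u_{TS}]$. Uniqueness for $\mathbf{U}^-$ follows by the symmetric argument with $T\mathcal{B}$, or by applying the automorphism induced by $T$.
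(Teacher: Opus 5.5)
The paper does not prove this proposition; it is recalled from \cite{notes1}. The underlying argument is the classical one for $\mathbf{U}^+$ in \cite{carter}, and your proposal reproduces it. The height filtration $\mathbf{U}_m$ together with Proposition~\ref{commutator} gives normality, abelian quotients and existence, and uniqueness comes from faithfulness of the action on $\mathfrak{g}_{\mathbb{K}}$. This is correct, but your uniqueness step needs two corrections.

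\emph{The $H_{S_i}$ route.} You have misidentified the condition. The $u_M$ are distinct basis vectors, so the height-$m$ component $\mp\sum_{\ell(M)=m}t_M(H_{S_i}|H_M)u_M$ separates different $M$ automatically. What you actually need is that, for each $M$, some $(H_{S_i}|H_M)$ is nonzero in $\mathbb{K}$. For connected ADE of rank at least $2$ this holds in every characteristic, since some $(H_{S_i}|H_M)$ equals $\pm1$. It fails for $A_1$ in characteristic $2$: there $[u_S,H_S]=\pm2u_S=0$, so $E_{[S]}(t)$ fixes $\mathcal{K}\otimes\mathbb{K}$.

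\emph{The fallback.} It does work in all characteristics, but you must say which vector detects $t_M$. For $m=1$ it is the $\mathcal{K}$-component $\pm t_SH_S$ of $E_{[S]}(t_S)u_{TS}$, not a root-space term. For $m\geq 2$ you need a simple $S$ with $\dim M-\dim S$ a root, plus the fact that in the simply-laced case $[u_M,u_{TS}]$ is $\pm$ a basis vector.

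\emph{The classical test vector.} Carter's choice avoids both case distinctions. Suppose the first differing factor yields $E_{[M]}(t)$ with $t\neq0$ lying in the subgroup generated by the $E_{[N]}(s)$ with $N$ later than $M$. Apply both sides to $u_{TM}$. The left side has $\mathcal{K}$-component $\pm tH_M$, which is nonzero since $H_M$ is primitive in $\mathcal{K}$. The right side has no $\mathcal{K}$-component: producing one would require writing $\dim M$ as a sum of positive roots of height at least $\ell(M)$ that are different from $\dim M$, which is impossible.
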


For any $n\in \mathbf{N}$, denote its image in $\mathbf{N}/\mathbf{H}$ by $\overline{n}$, and define its length
\begin{align*}
    l(\overline{n}) = min\{ t \geq 0 | \overline{n} = \overline{n_{[S_{i_1}]}} \cdots \overline{n_{[S_{i_t}]}}, S_{i_1},\cdots,S_{i_t} \text{ are simple objects in } \mathcal{B} \}.
\end{align*}
A reduced expression of $\overline{n}$ is a sequence $(i_1,\cdots,i_t)$ such that $\overline{n} = \overline{n_{[S_{i_1}]}} \cdots \overline{n_{[S_{i_t}]}}$ and $t=l(\overline{n})$.
For a reduced expression $(i_1,\cdots,i_t)$ of $\overline{n}$, there is a well-defined set
\begin{align*}
    R(\overline{n}) = \{  [M] | M\in \operatorname{ind}\mathcal{B}, \omega_{S_{i_1}}\cdots \omega_{S_{i_t}}(M) \in \operatorname{ind}T\mathcal{B}    \}.
\end{align*}
We have $R(\overline{n}) = \{ [S_{i_t}], [\omega_{S_{i_t}}(S_{i_{t-1}})], \cdots,[\omega_{S_{i_t}} \cdots \omega_{S_{i_2}}(S_{i_1})]   \}$.

For any $n\in \mathbf{N}$, let $ \mathbf{U}^+_{\overline{n}} = \prod\limits_{M\in  R(\overline{n})} \mathbf{E}_{[M]}$, where $\mathbf{E}_{[M]}$ denotes the root subgroup generated by $E_{[M]}(t)$ for all $t\in \mathbb{K}$. (Note that in \cite{notes1} this is denoted by $U_{\overline{n}}^-$, and $U_{\overline{n}}^+$ represents a different product).

\begin{proposition}
    (Bruhat decomposition)

    (1) We have
     \begin{align*}
         \mathbf{G} = \bigsqcup\limits_{\overline{n}\in \mathbf{N}/\mathbf{H}} \mathbf{B}^+ n\mathbf{B}^+ .
    \end{align*}

    (2) Any $x\in \mathbf{G}$ has a unique expression of the form $x=u'hnu$ with $u'\in \mathbf{U}^+$, $h\in \mathbf{H}$, $n\in \mathbf{N}$, $u\in \mathbf{U}^+_{\overline{n}}$.
\end{proposition}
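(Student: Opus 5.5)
The plan is to follow the standard Chevalley-group argument (as in Carter, Steinberg), translated into the root-category language through Lemma \ref{conjugate} and the commutator formula of Proposition \ref{commutator}.

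\textbf{Part (1).} Let $\mathbf{D}=\bigcup_{\overline{n}}\mathbf{B}^+n\mathbf{B}^+$. Since $\mathbf{G}$ is generated by $\mathbf{B}^+$ together with the $n_{[S_i]}$, it suffices to show $n_{[S_i]}\mathbf{B}^+n\mathbf{B}^+\subseteq \mathbf{B}^+n\mathbf{B}^+\cup\mathbf{B}^+n_{[S_i]}n\mathbf{B}^+$; this gives $\mathbf{D}\supseteq \mathbf{G}$. First I would write $\mathbf{B}^+=\mathbf{H}\,\mathbf{E}_{[S_i]}\,\mathbf{U}^+_{(i)}$, where $\mathbf{U}^+_{(i)}$ is the product of root subgroups over $\operatorname{ind}\mathcal{B}\setminus\{[S_i]\}$. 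Next I would check that $\omega_{S_i}$ permutes $\operatorname{ind}\mathcal{B}\setminus\{[S_i]\}$, which is the BGP reflection at the sink $i$. By Lemma \ref{conjugate}(1) this gives $n_{[S_i]}\mathbf{U}^+_{(i)}n_{[S_i]}^{-1}=\mathbf{U}^+_{(i)}$, and by Lemma \ref{conjugate}(2),(4) $\mathbf{H}$ is normalized. This reduces the problem to $n_{[S_i]}\mathbf{E}_{[S_i]}n$. There are two cases:
\begin{itemize}
\item If $\omega_n^{-1}(S_i)\in\operatorname{ind}\mathcal{B}$, then $n_{[S_i]}E_{[S_i]}(t)n=n_{[S_i]}n\,E_{[\cdot]}(\pm t)\in\mathbf{B}^+n_{[S_i]}n\mathbf{B}^+$.
\item Otherwise, I would use the rank-one identity $E_{[TS_i]}(s)=E_{[S_i]}(s^{-1})\,n_{[S_i]}(\cdot)\,E_{[S_i]}(s^{-1})$ for $s\neq 0$, which follows from $n_{[X]}(t)=E_{[X]}(t)E_{[TX]}(t^{-1})E_{[X]}(t)$, to push into $\mathbf{B}^+n\mathbf{B}^+$.
\end{itemize}

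For disjointness, suppose $\mathbf{B}^+n\mathbf{B}^+=\mathbf{B}^+n'\mathbf{B}^+$. I would induct on $l(\overline{n})$, using the multiplication rule above and the fact that $\mathbf{B}^+\cap\mathbf{N}=\mathbf{H}$. The latter holds because elements of $\mathbf{U}^+$ act unipotently with respect to the grading by $\mathcal{K}$ and send $u_X$ to $u_X$ plus higher terms, whereas a nontrivial $\overline{n}$ moves some $u_X$ with $X\in\mathcal{B}$ into $T\mathcal{B}$.

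\textbf{Part (2).} Start from $x=b'nb$ with $b=h_1u_1$. Move $h_1$ to the left past $n$ using Lemma \ref{conjugate}(4). Then factor $\mathbf{U}^+=\mathbf{U}^{+\prime}_{\overline{n}}\mathbf{U}^+_{\overline{n}}$, where $\mathbf{U}^{+\prime}_{\overline{n}}$ is the product over $[M]\notin R(\overline{n})$. This factorization follows from the unique-expression proposition for $\mathbf{U}^+$, after reordering so that both sets are closed under the extension operation $L_{X,Y,i,j}$; closure follows from Proposition \ref{commutator} and the characterization of $R(\overline{n})$ as the roots sent negative. Since $n\mathbf{U}^{+\prime}_{\overline{n}}n^{-1}\subseteq\mathbf{U}^+$ by the definition of $R(\overline{n})$ and Lemma \ref{conjugate}(1), this gives existence.

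For uniqueness, suppose $u'hnu=v'h'n'v$. By (1), $\overline{n}=\overline{n'}$, so we may write $n'=nh''$. Then $n^{-1}(v'^{-1}u')n\in\mathbf{H}\,\mathbf{U}^+_{\overline{n}}\mathbf{U}^+_{\overline{n}}\cdots$. The left side lies in $n^{-1}\mathbf{U}^+n$, which is a product of root subgroups over $\omega_n^{-1}(\operatorname{ind}\mathcal{B})$. Here the subgroups for roots in $R(\overline{n})$ land in $\mathbf{U}^-$. Now $\mathbf{U}^-\mathbf{H}\mathbf{U}^+$ is a direct-product decomposition with trivial pairwise intersections, which can be checked via the action on highest-weight-type vectors $u_X$. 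This forces $u'=v'$, $hn=h'n'$ and $u=v$.

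The main obstacle I expect is the closure step: showing that $R(\overline{n})$ and its complement in $\operatorname{ind}\mathcal{B}$ are each closed under indecomposable extensions. This is what makes $\mathbf{U}^+_{\overline{n}}$ a subgroup and gives the factorization. I would prove it by translating $L_{X,Y,i,j}$ into root addition $\alpha_X\,i+\alpha_Y\,j$ and applying the standard fact that the inversion set of a Weyl group element is closed and co-closed.
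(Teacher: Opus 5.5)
Your proposal is correct and is the standard Carter--Steinberg argument: the key inclusion $n_{[S_i]}\mathbf{B}^+n\mathbf{B}^+\subseteq \mathbf{B}^+n\mathbf{B}^+\cup\mathbf{B}^+n_{[S_i]}n\mathbf{B}^+$, induction on $l(\overline{n})$ using $\mathbf{B}^+\cap\mathbf{N}=\mathbf{H}$, and the factorization of $\mathbf{U}^+$ along the inversion set $R(\overline{n})$ and its complement. The paper gives no proof of this proposition; it only recalls it from \cite{notes1}, which adapts Carter's treatment to the root category, so your route is the expected one. You rightly read uniqueness in (2) as uniqueness of $u'$, of the product $hn$, and of $u$. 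Your uniqueness step is slightly garbled, since the negative part of $n^{-1}\mathbf{U}^+n$ is indexed by $R(\overline{n}^{-1})$, not $R(\overline{n})$; conjugating by $n$ instead of $n^{-1}$ gives $n\mathbf{U}^+_{\overline{n}}n^{-1}\subseteq\mathbf{U}^-$, and $\mathbf{U}^+\cap\mathbf{H}\mathbf{U}^-=1$ then finishes it directly.
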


\begin{proposition}
    \cite[Cor 4.42,4.44]{notes1}
    Assume $\mathbb{K}$ is an algebraically closed field, then $\mathbf{G}$ is a semisimple linear algebraic group, and the Lie algebra of $\mathbf{G}$ is isomorphic to $\mathfrak{g}_{\mathbb{K}}$.
\end{proposition}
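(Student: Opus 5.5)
The plan is to identify $\mathbf{G}(\mathcal{R},\mathbb{K})$ with the classical Chevalley group attached to the simple Lie algebra $\mathfrak{g}(\Phi(\mathcal{R}))$ and a Chevalley basis, and then quote the classical theorem of Chevalley, as presented in Steinberg \cite{Steinberg} or Geck \cite{geck}. That theorem says that over an algebraically closed field the adjoint Chevalley group is a semisimple (adjoint) linear algebraic group whose Lie algebra is the corresponding $\mathfrak{g}_{\mathbb{K}}$.

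\textbf{Step 1: transport the setup to a Chevalley basis.} Fix the complete hereditary subcategory $\mathcal{B}$ and the Lie algebra isomorphism $\phi_{\mathcal{B}}:\mathfrak{g}_{\mathbb{K}}\to\mathfrak{g}(\Phi(\mathcal{R}))_{\mathbb{K}}$ recalled above. This isomorphism sends each $u_X$ to $\pm e_{\alpha}$, where $\alpha$ is the root given by the dimension vector of $X$ (negated for $X\in T\mathcal{B}$). It sends the $H_S$ to the corresponding coroot elements $h_{\alpha}$, up to the normalisation by $d(X)$. Since the structure constants of $\mathfrak{g}_{\mathbb{Z}}$ are the integers $\gamma$ and $A_{XY}$, this identifies $\mathfrak{g}_{\mathbb{Z}}$ with the $\mathbb{Z}$-span of a Chevalley basis.

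\textbf{Step 2: match the generators.} Conjugation by $\phi_{\mathcal{B}}$ sends $E_{[X]}(t)=\exp(t\,\mathrm{ad}\,u_X)$ to $x_{\alpha}(\pm t)=\exp(\pm t\,\mathrm{ad}\,e_\alpha)$. Hence $\phi_{\mathcal{B}}\mathbf{G}\phi_{\mathcal{B}}^{-1}$ is exactly the adjoint Chevalley group $G_{\mathrm{ad}}(\Phi,\mathbb{K})$ generated by all $x_\alpha(t)$.

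\textbf{Step 3: quote the classical results.} Apply the classical theory for algebraically closed $\mathbb{K}$. The group $G_{\mathrm{ad}}(\Phi,\mathbb{K})\subset\operatorname{Aut}(\mathfrak{g}_{\mathbb{K}})\subset GL(\mathfrak{g}_{\mathbb{K}})$ is Zariski closed and connected, being generated by the connected unipotent one-parameter subgroups $t\mapsto x_\alpha(t)$. Its unipotent radical is trivial, so it is semisimple. Its Lie algebra is spanned by the tangent vectors $\mathrm{ad}\,e_\alpha$ together with the Lie algebra of the torus $\mathbf{H}$. Since $\mathfrak{g}_{\mathbb{K}}$ has trivial centre for the adjoint representation (apart from the small-characteristic subtleties handled in \cite{Steinberg}), the Lie algebra is $\mathrm{ad}(\mathfrak{g}_{\mathbb{K}})\cong\mathfrak{g}_{\mathbb{K}}$.

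Alternatively, one can argue intrinsically with the structure established above. The Bruhat decomposition gives a dense open big cell $\mathbf{U}^-\mathbf{H}\mathbf{U}^+$, which is isomorphic as a variety to $\mathbb{A}^{N}\times(\mathbb{K}^\times)^n\times\mathbb{A}^N$ by the unique-expression proposition and the description of $\mathbf{H}$. This gives $\dim\mathbf{G}=2N+n=\dim\mathfrak{g}_{\mathbb{K}}$. The tangent space at $1$ contains the $\mathrm{ad}\,u_X$ and the torus directions, so it is all of $\mathfrak{g}_{\mathbb{K}}$. Semisimplicity follows because $\mathbf{B}^+$ and $\mathbf{B}^-$ are opposite Borel subgroups whose intersection is the torus $\mathbf{H}$. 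Any connected normal unipotent subgroup would then lie in $\mathbf{B}^+\cap\mathbf{B}^-=\mathbf{H}$, and hence be trivial.

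\textbf{Main obstacle.} I expect the hardest point to be characteristic issues in the Lie algebra identification. In characteristic $2$ or $3$ with non-simply-laced $\Phi$, $\mathfrak{g}_{\mathbb{K}}$ may have a nontrivial ideal or centre. So one must check that $\operatorname{Lie}(\mathbf{G})$ equals $\mathfrak{g}_{\mathbb{K}}$ as built from the Chevalley lattice, not merely $\mathrm{ad}\,\mathfrak{g}_{\mathbb{K}}$. I would handle this by citing the scheme-theoretic statement that $\operatorname{Lie}(G_{\mathbb{Z}})\otimes\mathbb{K}=\mathfrak{g}_{\mathbb{Z}}\otimes\mathbb{K}$ for the Chevalley group scheme, which is the form used in \cite{geck}.
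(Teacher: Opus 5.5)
\textbf{Context.} The paper gives no proof of this statement; it is quoted from \cite{notes1}. The first half of your plan is sound and is the natural route. The $u_X$ together with the normalised $H_{S_i}$ form a Chevalley basis, so $\mathbf{G}$ is by definition Chevalley's adjoint group inside $\operatorname{GL}(\mathfrak{g}_{\mathbb{K}})$. It is closed and connected because it is generated by the irreducible curves $t\mapsto E_{[X]}(t)$ through $1$. Semisimplicity can then be quoted from \cite{Steinberg}.

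Your intrinsic variant has a small gap. $\mathbf{B}^+\cap\mathbf{B}^-=\mathbf{H}$ only kills the unipotent radical, which gives reductivity, not semisimplicity. You also need two further points:
\begin{itemize}
\item A connected normal subgroup of $\mathbf{G}$ lying in $\mathbf{H}$ is central. A central $h\in\mathbf{H}$ satisfies $hE_{[X]}(t)h^{-1}=E_{[X]}(t)$ for all $X$, so it acts trivially on every $u_X$ and equals $1$.
\item $\mathbf{B}^{\pm}$ must actually be shown to be Borel subgroups.
\end{itemize}

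\textbf{The genuine gap: the Lie algebra half.} The fix you propose does not apply here.
\begin{itemize}
\item The identity $\operatorname{Lie}(G_{\mathbb{Z}})\otimes\mathbb{K}=\mathfrak{g}_{\mathbb{Z}}\otimes\mathbb{K}$ holds for the \emph{simply connected} Chevalley group scheme. Its torus has cocharacter lattice the coroot lattice $Q^\vee$.
\item $\mathbf{G}\subset\operatorname{Aut}(\mathfrak{g}_{\mathbb{K}})$ is the \emph{adjoint} group. Its torus $\mathbf{H}$ has character lattice the root lattice, hence cocharacter lattice the coweight lattice $P^\vee$.
\item Consequently $\operatorname{Lie}(\mathbf{G})$ is the $\mathbb{K}$-form of the lattice spanned by the root vectors and $P^\vee$ (equivalently, the stabiliser of $\mathfrak{g}_{\mathbb{Z}}$ under $\operatorname{ad}$), not of $\mathfrak{g}_{\mathbb{Z}}$.
\end{itemize}
These differ exactly when $p=\operatorname{char}\mathbb{K}$ divides $\det(a_{ij})=|P^\vee/Q^\vee|$. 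This happens in simply-laced types: $A_n$ with $p\mid n+1$; $D_n$ and $E_7$ with $p=2$; $E_6$ with $p=3$. So your diagnosis is off. Conversely, for $G_2$ and $F_4$, where $\det(a_{ij})=1$, there is no problem even though $\mathfrak{g}_{\mathbb{K}}$ may be non-simple.

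For such $p$ your tangent-space argument also breaks, for three linked reasons:
\begin{itemize}
\item $(t_i)\mapsto\prod_i h_{[S_i]}(t_i)$ is an inseparable isogeny onto $\mathbf{H}$, not an isomorphism.
\item The derivatives of the $h_{[S_i]}(t)$ therefore only give $\operatorname{ad}$ of the Cartan part.
\item $\operatorname{ad}$ has a kernel, namely the centre of $\mathfrak{g}_{\mathbb{K}}$.
\end{itemize}

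\textbf{Counterexample: type $A_1$, $p=2$.} Here $\mathfrak{g}_{\mathbb{K}}$ is the Heisenberg algebra: $H_S$ is central and $[u_S,u_{TS}]=\pm H_S$. On the other hand $\mathbf{G}\cong\operatorname{PGL}_2(\mathbb{K})$, with torus $\{\operatorname{diag}(s,s^{-1},1)\}$ in the basis $u_S,u_{TS},H_S$. So $\operatorname{Lie}(\mathbf{G})$ is spanned by $\operatorname{ad}u_S$, $\operatorname{ad}u_{TS}$ and $D=\operatorname{diag}(1,1,0)$, with
\[
[D,\operatorname{ad}u_S]=\operatorname{ad}u_S,\qquad [D,\operatorname{ad}u_{TS}]=\operatorname{ad}u_{TS},\qquad [\operatorname{ad}u_S,\operatorname{ad}u_{TS}]=0 .
\]
This algebra is not nilpotent, hence not isomorphic to $\mathfrak{g}_{\mathbb{K}}$.

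\textbf{What is needed.} The Lie algebra assertion requires an extra hypothesis such as $p\nmid\det(a_{ij})$, in particular characteristic $0$. Under that hypothesis your argument closes:
\begin{itemize}
\item $\operatorname{ad}$ is injective;
\item $\operatorname{Lie}(\mathbf{G})\supseteq\operatorname{ad}(\mathfrak{g}_{\mathbb{K}})$;
\item $\dim\mathbf{G}=2N+n=\dim\mathfrak{g}_{\mathbb{K}}$, which forces equality.
\end{itemize}
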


\subsection{Total Positivity and Reductive Groups}
For any split reductive connected algebraic group $G$ over some infinite field $K$, Lusztig define the submonoids $G_{\geq 0}$ and $G_{>0}$ in \cite{TR1},\cite{TR2}, which generalize the totally positive matrices in $\operatorname{GL}_n(\mathbb{R})$.
He studies these totally positive elements of $G$ using canonical bases and their positivity properties, and obtains various results, see for instance \cite{TC}.
Here we recall the definitions and some properties of these monoids.

Let $K=\mathbb{R}$ and let $G$ be a split reductive connected algebraic group over $\mathbb{R}$. 
We assume that $G$ is simply-laced.
Fix an $\mathbb{R}$-split maximal torus $T$ of $G$, and $W=N_G(T)/T$ is the Weyl group of $G$.
The element of maximal length in $W$ is denoted by $w_0$.
Let $B^+,B^-$ be a pair of opposed Borel subgroups containing $T$. 
Let $U^+$ (resp. $U^-$) be the unipotent radical of $B^+$ (resp. $B^-$), and $U^+_i$ (resp. $U^-_i$), $i\in I$ be the corresponding simple root subgroups of $U^+$ (resp. $U^-$), where $I$ is given by the root datum.
For each $i\in I$, let $\chi_i:\mathbb{R}^{\times}\rightarrow T$ be the simple coroot, and let $x_i:\mathbb{R}\rightarrow U^+_i$ and $y_i:\mathbb{R}\rightarrow U^-_i$ be the algebraic group isomorphisms, such that there is a homomorphism $\operatorname{SL}_2(\mathbb{R})\rightarrow G$ maps 
$\begin{pmatrix}
    1 & a \\ 0 & 1
\end{pmatrix}$
to $x_i(a)$, 
$\begin{pmatrix}
    b & 0 \\ 0 & b^{-1}
\end{pmatrix}$
to $\chi_i(b)$, and
$\begin{pmatrix}
    1 & 0 \\ c & 1
\end{pmatrix}$ 
to $y_i(c)$, for any $a,c\in \mathbb{R}, b\in \mathbb{R}^{\times}$.

Let $U^+_{\geq 0}$ (resp. $U^-_{\geq 0}$) be the submonoid of $U^+$ (resp. $U^-$) generated by $x_i(a)$ (resp. $y_i(a)$), for all $i\in I, a\in \mathbb{R}_{\geq 0}$.
Let $T_{>0}$ be the submonoid of $T$ generated by $\chi_i(a)$, for all $i\in I, a\in \mathbb{R}_{>0}$.
Let $G_{\geq 0}$ be the submonoid of $G$ generated by $U^+_{\geq 0}$, $U^-_{\geq 0}$ and $T_{>0}$.

Lusztig also gives definitions of these monoids by generators and relations.

\begin{definition}\label{u>=0}
    \cite[2.9]{TR2}
    Let $\mathcal{U}_{\geq 0}$ be the monoid with generators the symbols $i^a$, for all $i\in I,a\in \mathbb{R}_{\geq 0}$, and subject to relations:

    (1) For any $i\in I, a,b\in \mathbb{R}_{\geq 0}$, $i^ai^b=i^{a+b}$;

    (2) For $i\neq j$ in $I$, $i^{a_1}j^{a_2}i^{a_3}\cdots = j^{a_1'}i^{a_2'}j^{a_3'}\cdots$, both products have $t=o(s_is_j)$ (the order of $s_is_j$) factors, and $(a_1,\cdots,a_t),(a_1',\cdots,a_t')\in \mathbb{R}_{>0}^t$ satisfies some explicit bijection (See \cite[Proposition 2.5]{TR1} for case of type ADE). 
\end{definition}

Then there are isomorphisms of monoids $U^+_{\geq 0}\cong   \mathcal{U}_{\geq 0}\cong U^-_{\geq 0} $.

\begin{definition}\label{G>=0}
    \cite[2.10]{TR2}
    Let $\mathcal{G}_{\geq 0}$ be the monoid with generators the symbols $i^a,(-i)^a,\underline{i}^b$ for all $i\in I$, $a\in \mathbb{R}_{\geq 0}$ and $b\in \mathbb{R}_{>0}$, and subject to relations:

    (1) For $i\in I, \epsilon =\pm 1, a,b\in \mathbb{R}_{\geq 0}$, we have $(\epsilon i)^a (\epsilon i)^b = (\epsilon i)^{a+b}$.

    (2) For $i\neq j$ in $I$, $(\epsilon i)^{a_1}(\epsilon j)^{a_2}(\epsilon i)^{a_3}\cdots = (\epsilon j)^{a_1'}(\epsilon i)^{a_2'}(\epsilon j)^{a_3'}\cdots$, both products have $t=o(s_is_j)$ factors, and $(a_1,\cdots,a_t),(a_1',\cdots,a_t')\in \mathbb{R}_{>0}^t$ satisfies some explicit bijection (the same as Definition \ref{u>=0}(2)).

    (3) For $i\in I, a,c\in \mathbb{R}_{\geq 0}, b\in \mathbb{R}_{>0}$, we have 
    \begin{align*}
        i^a \underline{i}^b (-i)^c = (-i)^{\frac{c}{ac+b^2}} \underline{i}^{\frac{ac+b^2}{b}} i^{\frac{a}{ac+b^2}}.
    \end{align*}

    (4) For $i,j\in I$ and $a,b\in \mathbb{R}_{>0}$, we have $\underline{i}^a \underline{i}^b = \underline{i}^{ab}$, $\underline{i}^1=1$, $\underline{i}^a \underline{j}^b = \underline{j}^b \underline{i}^a$.

    (5) For $i,j\in I$, $\epsilon =\pm 1$, $a\in \mathbb{R}_{>0}$ and $b\in \mathbb{R}_{\geq 0}$, we have 
    \begin{align*}
        \underline{j}^a (\epsilon i)^b = (\epsilon i)^{a^{\epsilon a_{ji}}b}\underline{j}^a,
    \end{align*}
    where $(a_{ij})$ is the Cartan matrix corresponds to the root datum.

    (6) For $i\neq j$ in $ I$, $\epsilon =\pm 1$, $a,b\in \mathbb{R}_{>0}$, we have $(\epsilon i)^a (-\epsilon j)^b = (-\epsilon j)^b (\epsilon i)^a$.
\end{definition}

Then there is a isomorphism of monoids $\mathcal{G}_{\geq 0}\cong G_{\geq 0}$.

\begin{lemma}\label{utu}
    \cite[Lemma 2.3]{TR1}

    (1) Any element $g\in G_{\geq 0}$ can be written uniquely in the form $g=u^+ t u^-$ with $u^+\in U^+_{\geq 0}$, $t\in T_{>0}$ and $u^-\in U^-_{\geq 0}$.

    (2) Any element $g\in G_{\geq 0}$ can be written uniquely in the form $g=u^- t u^+$ with $u^+\in U^+_{\geq 0}$, $t\in T_{>0}$ and $u^-\in U^-_{\geq 0}$.
\end{lemma}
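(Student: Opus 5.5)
The plan for Lemma \ref{utu} is to work entirely in the presentation $\mathcal{G}_{\geq 0}\cong G_{\geq 0}$ of Definition \ref{G>=0}, and to treat existence and uniqueness separately.

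\emph{Existence in (1).} Write $g$ as a word in the generators $i^a$, $(-i)^a$ and $\underline{i}^b$. The goal is to move every $i^a$ to the left and every $(-i)^a$ to the right, using only the relations, while staying inside the monoid. Three moves suffice.

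(a) A torus generator passes any unipotent generator by relation (5). The exponent changes by the positive factor $a^{\pm a_{ji}}$, so all parameters remain nonnegative.

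(b) For $i\neq j$, a pattern $(-i)^a j^b$ is rewritten as $j^b(-i)^a$ by relation (6).

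(c) For $i=j$, the pattern $(-i)^c\, i^a$ with $a,c>0$ is handled with relation (3). Read right to left, it expresses $(-i)^{c'}\underline{i}^{b'}i^{a'}$ as $i^{a}\underline{i}^{b}(-i)^{c}$, provided we can solve for positive $a,b,c$. Take $b'=1$ and invert the map $(a,b,c)\mapsto\bigl(\tfrac{c}{ac+b^2},\tfrac{ac+b^2}{b},\tfrac{a}{ac+b^2}\bigr)$. This is the rank-one $\operatorname{SL}_2$ identity
\[
\begin{pmatrix}1&0\\c&1\end{pmatrix}\begin{pmatrix}1&a\\0&1\end{pmatrix}=\begin{pmatrix}1&\frac{a}{1+ac}\\0&1\end{pmatrix}\begin{pmatrix}(1+ac)^{-1}&0\\0&1+ac\end{pmatrix}\begin{pmatrix}1&0\\\frac{c}{1+ac}&1\end{pmatrix},
\]
and every entry on the right is positive. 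Any leftover factor $\underline{i}^{b'}$ is then absorbed using relations (4) and (5).

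To show the rewriting stops, induct on the number of pairs (a $(-\,)$-letter, a $(+)$-letter to its right). Each move (b) or (c) decreases this count by at least one, because the new torus letters do not count and move (a) leaves the count unchanged. The result is a word of the form $u^+tu^-$.

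\emph{Existence in (2).} This follows in the same way with the roles of $\pm$ swapped, using relation (3) directly instead of its inverse. Alternatively, apply the anti-involution of $G$ that transposes: it exchanges $x_i$ and $y_i$, fixes $\chi_i$, and preserves $G_{\geq 0}$.

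\emph{Uniqueness.} Work in $G$ rather than in the monoid. Suppose $u_1^+t_1u_1^-=u_2^+t_2u_2^-$. Then
\[
(u_2^+)^{-1}u_1^+=t_2u_2^-(u_1^-)^{-1}t_1^{-1}.
\]
The left side lies in $U^+$ and the right side lies in $B^-$. Since $U^+\cap B^-=\{1\}$, both sides equal $1$. Hence $u_1^+=u_2^+$, and $t_1u_1^-=t_2u_2^-$ gives $t_1=t_2$ and $u_1^-=u_2^-$ because $B^-=T\ltimes U^-$. The same argument proves uniqueness in (2).

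The main obstacle is termination of the rewriting. Relation (5) changes parameters but not letters, which is harmless. Relation (2), the braid moves, is never needed, because we only commute opposite-sign letters. So the inversion of relation (3) together with the counting argument above is the crux of the proof.
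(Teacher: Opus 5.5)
The paper gives no proof of its own here and only cites Lusztig \cite[Lemma 2.3]{TR1}. Your argument is correct and is essentially Lusztig's standard one. Existence comes from three facts: the rank-one identity $y_i(c)x_i(a)=x_i\bigl(\tfrac{a}{1+ac}\bigr)\chi_i\bigl(\tfrac{1}{1+ac}\bigr)y_i\bigl(\tfrac{c}{1+ac}\bigr)$, the commutation of $x_i$ with $y_j$ for $i\neq j$, and $T_{>0}$ normalizing $U^{\pm}_{\geq 0}$. Uniqueness comes from $U^+\cap B^-=\{1\}$ and $B^-=T\ltimes U^-$.

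One framing point: for existence you only use that the relations of Definition \ref{G>=0} hold in $G$, not the full isomorphism $\mathcal{G}_{\geq 0}\cong G_{\geq 0}$. That matters, because the injectivity half of that isomorphism is normally deduced from this very lemma, so invoking it would be circular.
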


For each $w\in W$, let $s_{i_1}\cdots s_{i_t}$ be a reduced expression for $w$. Define a subset $U^+(w)$ of $U^+_{\geq 0}$ by 
\begin{align*}
    U^+(w)=\{x_{i_1}(a_1)\cdots x_{i_t}(a_t)| a_1,\cdots,a_t \in \mathbb{R}_{>0}\}.
\end{align*}
It is well-defined and has the following properties.

\begin{proposition}\label{uw}
    \cite[Proposition 2.7, Corollary 2.8]{TR1}
    
    (1) For $w\in W$ with a reduced expression $s_{i_1}\cdots s_{i_t}$, the map $\mathbb{R}^t_{>0}\rightarrow U^+$ given by $(a_1,\cdots, a_t)\mapsto x_{i_1}(a_1)\cdots x_{i_t}(a_t)$ is injective, and its image doesn't depend on the choice of reduced expression for $w$.

    (2) The set $U^+(w)$ is a subset of $B^- \dot{w} B^-$, where $\dot{w}$ is a representative of $w$ in $N_G(T)$.

    (3) For $w,w'\in W$ and $w\neq w'$, we have $U^+(w) \cap U^+(w')=\emptyset$.

    (4) We have $U^+_{\geq 0}=\bigsqcup\limits_{w\in W}U^+(w)$.
\end{proposition}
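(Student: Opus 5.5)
Proposition \ref{uw} is Lusztig's result, quoted from \cite{TR1}; I would reprove it in the same way, by induction on the length of $w$, using the rank-two relations of Definition \ref{u>=0} together with the Bruhat decomposition relative to $B^-$.

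\textbf{Part (2).} I would argue by induction on $t=l(w)$.
\begin{itemize}
\item For $t=1$, an $\operatorname{SL}_2$ computation gives, for $a>0$,
\begin{align*}
x_i(a)=y_i(a^{-1})\,\chi_i(a)\,\dot s_i\,y_i(a^{-1}),
\end{align*}
so $x_i(a)\in B^-\dot s_iB^-$.
\item For the induction step, write $w=w's_{i_t}$ with $l(w)=l(w')+1$. The standard Tits-system property gives $(B^-\dot w'B^-)(B^-\dot s_iB^-)=B^-\dot w\dot s_iB^-$ whenever $l(w's_i)>l(w')$. Hence
\begin{align*}
x_{i_1}(a_1)\cdots x_{i_t}(a_t)\in B^-\dot wB^-.
\end{align*}
\end{itemize}

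\textbf{Part (3).} This follows from (2) and the disjointness of the Bruhat cells $B^-\dot wB^-$.

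\textbf{Independence of the reduced expression in (1).} By Matsumoto's theorem, any two reduced expressions of $w$ are connected by braid moves. Each braid move replaces a segment $x_i(a_1)x_j(a_2)\cdots$ by $x_j(a_1')x_i(a_2')\cdots$. By relation (2) of Definition \ref{u>=0}, this is a bijection between the positive parameter sets; in the simply-laced case it is $(a,b,c)\mapsto\bigl(bc/(a+c),\,a+c,\,ab/(a+c)\bigr)$. So the image is unchanged.

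\textbf{Injectivity in (1).} Again I would induct on $t$. Suppose
\begin{align*}
x_{i_1}(a_1)\cdots x_{i_t}(a_t)=x_{i_1}(b_1)\cdots x_{i_t}(b_t).
\end{align*}
The key step is to recover $a_t$ from the product, for example as a ratio of matrix coefficients in a suitable representation (generalized minors). Alternatively, one can use uniqueness in the Bruhat decomposition: write $g\in B^-\dot w'B^-\,x_{i_t}(a_t)$ and read off $a_t$ from the $U_{i_t}$-component in the decomposition $B^-\dot wB^-=B^-\dot w\,U^-_w$. Once $a_t=b_t$, cancel $x_{i_t}(a_t)$ and apply the inductive hypothesis.

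\textbf{Part (4).} Every element of $U^+_{\geq0}$ is a product $x_{j_1}(c_1)\cdots x_{j_m}(c_m)$ with all $c_k>0$. I would reduce such a word to a reduced one by induction on $m$.
\begin{itemize}
\item If the word is not reduced, then by the exchange and deletion properties, applying braid moves produces a word containing adjacent letters $ii$. Braid moves preserve positivity of the parameters.
\item The adjacent pair then merges: $x_i(c)x_i(c')=x_i(c+c')$ with $c+c'>0$, which shortens the word.
\end{itemize}
This puts the element in some $U^+(w)$. Disjointness of the union comes from (3).

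\textbf{Main obstacle.} I expect the injectivity step to be the hardest, specifically identifying $a_t$ canonically. I would first try the Bruhat-uniqueness argument, since it needs no representation-theoretic input.
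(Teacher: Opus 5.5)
The paper does not prove this proposition; it is quoted from \cite{TR1}. Your outline is correct. For (2), (3), the independence of the reduced expression in (1), and (4), it is essentially Lusztig's argument: BN-pair multiplication for $B^-$, disjointness of the cells $B^-\dot wB^-$, Matsumoto's theorem with the positive rank-two braid relations, and merging $x_i(c)x_i(c')=x_i(c+c')$. One small fix: in (2) the right-hand side should be $B^-\dot w'\dot s_iB^-=B^-\dot wB^-$.

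The real difference is injectivity. You flag it as the main obstacle, but it follows from (2) in two lines. Suppose $x_{i_1}(a_1)\cdots x_{i_t}(a_t)=x_{i_1}(b_1)\cdots x_{i_t}(b_t)$ with, say, $a_t>b_t$. Multiply on the right by $x_{i_t}(-b_t)$ to get $x_{i_1}(a_1)\cdots x_{i_{t-1}}(a_{t-1})x_{i_t}(a_t-b_t)=x_{i_1}(b_1)\cdots x_{i_{t-1}}(b_{t-1})$. By (2) the left side lies in $B^-\dot wB^-$ and the right side in $B^-\dot w'B^-$, contradicting disjointness. So $a_t=b_t$, and you cancel and induct. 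This is essentially how \cite{TR1} proceeds.

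Your coordinate-reading route also works, but your sketch skips the points that make it go through.
\begin{itemize}
\item The coordinate is not a $U^+_{i_t}$-coordinate. It is the image of the $U^-_w$-component in $U^-_{i_t}$, under the quotient of $U^-$ by the normal subgroup generated by the $U_{-\beta}$ with $\beta\neq\alpha_{i_t}$.
\item Write the prefix as $b'\dot w'u'$ with $u'\in U^-_{w'}$, use $x_{i_t}(a_t)=y_{i_t}(a_t^{-1})\chi_{i_t}(a_t)\dot s_{i_t}y_{i_t}(a_t^{-1})$, and push $u'$ to the right. The image of the $U^-_w$-component is then $y_{i_t}(a_t^{-1})$.
\item This uses $w'(\alpha_{i_t})>0$: it makes $u'$ die in the quotient, and it gives $\dot w'y_{i_t}(c)(\dot w')^{-1}\in U^-$.
\item It also uses $w(\alpha_{i_t})<0$, so that the piece of $U^-$ absorbed into $B^-$ dies in the quotient as well.
\item You must fix the representative $\dot w=\dot w'\dot s_{i_t}$. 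Changing it on the right by a torus element rescales the coordinate.
\end{itemize}
In exchange for this bookkeeping, your route gives more than injectivity. It produces an explicit inverse by regular functions on the cell, so the parametrization is a homeomorphism onto its image. The subtraction trick gives only injectivity, which is all the statement asks for.
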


Similar results hold for $U^-$.

Define $U^+_{>0}=U^+(w_0)$, $U^-_{>0}=U^-(w_0)$, and $G_{>0}=U^+_{>0} T_{>0} U^-_{>0}=U^-_{>0} T_{>0} U^+_{>0}$.
Then $G_{>0}$ is a submonoid of $G_{\geq 0}$.

\begin{theorem}\label{dense}
    \cite[Proposition 4.2, Theorem 4.3, Remark 4.4]{TR1}

    (1) The subset $U^+_{>0}$ is dense in $U^+_{\geq 0}$, and the subset $U^+_{\geq 0}$ is closed in $U^+$. Similar for $U^-$.

    (2) The subset $G_{>0}$ is dense in $G_{\geq 0}$, and the subset $G_{\geq 0}$ is closed in $G$. 
\end{theorem}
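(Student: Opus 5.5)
The plan is to prove (1) first and then deduce (2) from it using the product decompositions of Lemma \ref{utu}.

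\textbf{Part (1).} Fix a reduced expression $s_{i_1}\cdots s_{i_N}$ of $w_0$. I will show that every element of $U^+_{\geq 0}$ is a limit of elements of $U^+(w_0)$.

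By Proposition \ref{uw}(4), any $u\in U^+_{\geq 0}$ lies in some $U^+(w)$. Write $u=x_{j_1}(a_1)\cdots x_{j_t}(a_t)$ for a reduced expression of $w$, with all $a_k>0$. Since $w\leq w_0$ in the Bruhat order, the subword property lets us choose a reduced expression of $w_0$ that contains $(j_1,\dots,j_t)$ as a subword. By Proposition \ref{uw}(1) the set $U^+(w_0)$ does not depend on which reduced expression of $w_0$ we use, so we may work with this one.

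Now take the element of $U^+(w_0)$ given by this expression, with parameter $a_k$ in the positions of the subword and a small $\epsilon>0$ in every other position. Let $\epsilon\to 0$. Each map $a\mapsto x_i(a)$ is continuous and $x_i(0)=1$, so these elements converge to $u$. Hence $U^+_{>0}$ is dense in $U^+_{\geq 0}$.

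For closedness of $U^+_{\geq 0}$ in $U^+$, I would use Lusztig's positivity characterization: $u\in U^+$ lies in $U^+_{\geq 0}$ exactly when $u$ acts by matrices with nonnegative entries in the canonical bases of all irreducible representations (equivalently, all generalized minors are $\geq 0$). Each such condition is a closed condition, since it is a nonstrict inequality on a continuous function. This step is the main obstacle. The inclusion ``$U^+_{\geq 0}\subseteq$ nonnegative'' is easy: $x_i(a)$ acts by $\exp(aE_i)$, which is nonnegative because $E_i$ acts nonnegatively on the canonical basis. The converse needs the parametrization of $U^+\cap B^-\dot wB^-$ together with an induction on $l(w)$ using Proposition \ref{uw}(2),(3). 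If that converse is too heavy, I would instead argue closedness directly: a limit of elements of $U^+(w_0)$ can be written, after compactness arguments on the parameters, as a limit in some lower cell. The treatment of $U^-$ is symmetric.

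\textbf{Part (2).} By Lemma \ref{utu}, $G_{\geq 0}=U^+_{\geq 0}T_{>0}U^-_{\geq 0}$. Multiplication is continuous, so by (1) the set $U^+_{>0}T_{>0}U^-_{>0}=G_{>0}$ is dense in this product, and therefore dense in $G_{\geq 0}$.

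For closedness, suppose $g_n\to g$ with $g_n\in G_{\geq 0}$. Near the open cell $U^+TU^-$ the factorization $g=u^+tu^-$ is a homeomorphism, so the factors converge. If $g$ lies in the big cell, the limits of the factors lie in $U^\pm_{\geq 0}$ by (1), and the torus part stays in $T_{>0}$ because its coordinates are positive and nonzero on the big cell. If $g$ is outside the big cell, I would show this is impossible. Certain minors $\Delta$ are positive on $G_{\geq 0}$ and bounded below in terms of the torus part, and these minors cut out the complement of the big cell. The delicate point is ruling out degeneration: I would first try to show that the relevant minors are bounded below on $G_{\geq 0}$ along the sequence, using nonnegativity of all products. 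This again rests on the canonical-basis positivity used in part (1), which I expect to be the crux.
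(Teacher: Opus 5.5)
The paper gives no proof of this statement; it quotes it from Lusztig. Judged on its own, your density arguments in (1) and (2) are fine. One small correction: to place the given word $(j_1,\dots,j_t)$ inside a reduced word of $w_0$, use $\ell(w_0)=\ell(w)+\ell(w^{-1}w_0)$ rather than the subword property. The subword property only finds \emph{some} reduced word of $w$ inside a given word of $w_0$, though that would also suffice by Proposition \ref{uw}(1).

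The genuine gap is closedness, which is the substantive half of both (1) and (2). In (1) your main route needs the converse ``nonnegative canonical-basis coefficients $\Rightarrow$ $u\in U^+_{\geq 0}$''. That is a separate and much deeper theorem than the one at hand, and the induction through Proposition \ref{uw}(2),(3) that you gesture at is not an argument. Your fallback invokes compactness but skips the only nontrivial point: why the parameters stay bounded along a convergent sequence. That point is elementary. Fix a reduced word $(i_1,\dots,i_N)$ of $w_0$ and set $p(a)=x_{i_1}(a_1)\cdots x_{i_N}(a_N)$ on $\mathbb{R}^N_{\geq 0}$. By Proposition \ref{uw}(1),(4), the subword property and $x_i(0)=1$, we have $p(\mathbb{R}^N_{\geq 0})=U^+_{\geq 0}$. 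The abelianization $\pi\colon U^+\to U^+/[U^+,U^+]\cong\mathbb{R}^I$ is a continuous homomorphism with $\pi(p(a))=(\sum_{k:\,i_k=i}a_k)_{i\in I}$. So if $p(a^{(r)})\to u$, these sums are bounded. Since all $a^{(r)}_k\geq 0$, the $a^{(r)}$ stay in a compact box, a subsequence converges to some $a\in\mathbb{R}^N_{\geq 0}$, and $u=p(a)\in U^+_{\geq 0}$.

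In (2) your reduction to ``the limit $g$ lies in $U^+TU^-$'' is correct, and so is your finish from there. But that non-degeneration is exactly what must be proved, and you leave it at ``I would try to show''. The boundedness trick does not transfer: a priori $t_r$ could escape to infinity in $T_{>0}$ while $g_r=u_r^+t_ru_r^-$ converges. What rules this out is a two-sided comparison that uses only the easy direction of positivity. Take $\lambda$ dominant and $\lambda^*=-w_0\lambda$. A lowest weight vector gives exactly $\langle v_{\mathrm{low}}^*,u^+tu^-v_{\mathrm{low}}\rangle=(w_0\lambda)(t)=\lambda^*(t)^{-1}$. On the other hand, expanding $\langle v_{\lambda^*}^*,u^+tu^-v_{\lambda^*}\rangle$ over the canonical basis of $V(\lambda^*)$ gives a sum of nonnegative terms, the highest-weight one being $\lambda^*(t)$. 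Let $C$ bound the convergent sequence $\langle v_{\lambda^*}^*,g_rv_{\lambda^*}\rangle$. Then $\langle v_{\mathrm{low}}^*,g_rv_{\mathrm{low}}\rangle\geq 1/C>0$ uniformly in $r$. Taking $\lambda$ fundamental, these coefficients cut out $U^+TU^-$, so $g\in U^+TU^-$, and your argument then concludes via (1).
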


Let $\mathfrak{B}$ be the set of all Borel subgroups of $G$.
For $x\in G$ and a Borel subgroup $B$, denote $xBx^{-1}$ by $x\cdot B$.
Let $s_{i_1}\cdots s_{i_m}$ be a reduced expression for $w_0$ and denote by $\mathbf{i}=(i_1,\cdots,i_m)$.
For $0\leq k\leq m$, Lusztig defines a map $f_{\mathbf{i},k}:\mathbb{R}_{>0}^m\rightarrow \mathfrak{B}$ by 
    \begin{align*}
        f_{\mathbf{i},k}(a_1,\cdots,a_m)=x_{i_1}(a_1)\cdots x_{i_k}(a_k) y_{i_m}(a_m)\cdots y_{i_{k+1}}(a_{k+1}) \dot{s}_{i_{k+1}}\cdots \dot{s}_{i_m} \cdot B^-.
    \end{align*}
and shows that these maps are injective and independent of the choice of $\mathbf{i}$ and $k$ in \cite[3.1, Proposition 3.2]{TC}.
In particular, considering the image of $f_{\mathbf{i},0}$ and $f_{\mathbf{i},m}$, we have the following result.

\begin{proposition}\label{u+-bij}
    \cite[Proposition 3.3]{TC}

    There exists a unique bijection $\phi: U^+_{>0}\rightarrow U^-_{>0}$ such that $\phi(u)B^+\phi(u)^{-1}=uB^- u^{-1}$ for all $u\in U^+_{>0}$.
\end{proposition}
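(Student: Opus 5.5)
The plan is to deduce the statement from Lusztig's maps $f_{\mathbf{i},k}$ recalled just above. We compare the two extreme cases $k=m$ and $k=0$, and then use that a Borel subgroup is its own normalizer to pass from Borel subgroups back to group elements.

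Fix a reduced expression $\mathbf{i}=(i_1,\cdots,i_m)$ of $w_0$.

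First, for $k=m$ we have $f_{\mathbf{i},m}(a_1,\cdots,a_m)=u\cdot B^-$ with $u=x_{i_1}(a_1)\cdots x_{i_m}(a_m)$. By Proposition \ref{uw}(1), as $(a_1,\cdots,a_m)$ runs over $\mathbb{R}^m_{>0}$, the element $u$ runs bijectively over $U^+(w_0)=U^+_{>0}$. Hence the image of $f_{\mathbf{i},m}$ is $\{u\cdot B^- \mid u\in U^+_{>0}\}$.

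Second, for $k=0$ we have
\begin{align*}
f_{\mathbf{i},0}(a_1,\cdots,a_m)=y_{i_m}(a_m)\cdots y_{i_1}(a_1)\,\dot{s}_{i_1}\cdots \dot{s}_{i_m}\cdot B^- .
\end{align*}
Since $\dot{s}_{i_1}\cdots\dot{s}_{i_m}$ represents $w_0$ and $\dot{w}_0 B^-\dot{w}_0^{-1}=B^+$, this equals $v\cdot B^+$ with $v=y_{i_m}(a_m)\cdots y_{i_1}(a_1)$. The reversed word $(i_m,\cdots,i_1)$ is a reduced expression of $w_0^{-1}=w_0$. So by the $U^-$ analogue of Proposition \ref{uw}(1), $v$ runs bijectively over $U^-_{>0}$, and the image of $f_{\mathbf{i},0}$ is $\{v\cdot B^+ \mid v\in U^-_{>0}\}$.

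By \cite[Proposition 3.2]{TC}, the maps $f_{\mathbf{i},m}$ and $f_{\mathbf{i},0}$ coincide, so in particular their images agree. Hence for every $u\in U^+_{>0}$ there is some $v\in U^-_{>0}$ with $v\cdot B^+=u\cdot B^-$, and we set $\phi(u)=v$.

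To see that $v$ is unique, suppose $v,v'\in U^-$ satisfy $v\cdot B^+=v'\cdot B^+$. Then $v^{-1}v'$ normalizes $B^+$, so it lies in $B^+$ because a Borel subgroup is self-normalizing. Thus $v^{-1}v'\in U^-\cap B^+=\{1\}$. This shows $\phi$ is well defined and is the only map with the required property.

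Injectivity follows by the symmetric argument. If $\phi(u)=\phi(u')$, then $u\cdot B^-=u'\cdot B^-$, so $u^{-1}u'\in U^+\cap B^-=\{1\}$. Surjectivity follows from the equality of the two images: every $v\in U^-_{>0}$ gives a Borel subgroup $v\cdot B^+$ in the image of $f_{\mathbf{i},0}$, hence of the form $u\cdot B^-$ with $u\in U^+_{>0}$, and then $\phi(u)=v$.

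The only nontrivial input is the independence of $f_{\mathbf{i},k}$ from $k$, specifically $f_{\mathbf{i},0}=f_{\mathbf{i},m}$. If I had to prove this myself, I would reduce it to the case $k\mapsto k-1$. That step amounts to the rank-one identity
\begin{align*}
x_i(a)\cdot B^- = y_i(a^{-1})\,\dot{s}_i\cdot B^- \qquad (a>0),
\end{align*}
which is checked inside $\operatorname{SL}_2$: the matrix $\begin{pmatrix} 1&a\\0&1\end{pmatrix}^{-1}\begin{pmatrix}1&0\\a^{-1}&1\end{pmatrix}\begin{pmatrix}0&-1\\1&0\end{pmatrix}$ is lower triangular. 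I would then verify that the remaining factors of each word are compatible with this rank-one move, with the parameters transforming positively. This compatibility check is where I expect the main difficulty.
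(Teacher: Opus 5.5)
Your argument is correct and is essentially the paper's own: the paper gives no proof beyond citing Lusztig and comparing the images of $f_{\mathbf{i},0}$ and $f_{\mathbf{i},m}$, and your self-normalizing-Borel argument ($U^-\cap B^+=\{1\}$, $U^+\cap B^-=\{1\}$) correctly supplies the well-definedness, uniqueness, injectivity and surjectivity. One correction: only the image of $f_{\mathbf{i},k}$ is independent of $k$, not the map itself. Your own rank-one identity gives $f_{\mathbf{i},1}(a)=f_{\mathbf{i},0}(a^{-1})$ in $\operatorname{SL}_2$ (compare the reparametrization $f_k=f_{k+1}\varphi_k$ in Proposition \ref{fk}), but since you only use equality of images, the proof is unaffected.
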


\section{Total Positivity and $\mathbf{G}(\mathcal{R})$}

In this paper, we aim to characterize the size of these monoids of totally positive elements, using the root subgroups corresponding to indecomposable objects in the root category $\mathcal{R}$. 
We keep the notations from the previous section and assume that $\mathcal{R}$ is of type ADE.
The results actually hold for any finite type by means of folding.

\subsection{Submonoids in $\mathbf{G}(\mathcal{R})$}

We consider Lusztig's constructions in the Chevalley group $\mathbf{G}=\mathbf{G}(\mathcal{R},\mathbb{R})$ from the root category.

\begin{lemma}
    For any $X\in \operatorname{ind}\mathcal{R}$, $a,c\geq 0$ and $b>0$, we have
    \begin{align*}
        E_{[X]}(a) h_{[X]}(b) E_{[TX]}(-c) = E_{[TX]}(-\frac{c}{ac+b^2}) h_{[X]}(\frac{ac+b^2}{b}) E_{[X]}(\frac{a}{ac+b^2}).
    \end{align*}
\end{lemma}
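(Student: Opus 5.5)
The identity only involves the three one-parameter families attached to $X$ and $TX$. So I would reduce it to a $2\times 2$ matrix computation in $\operatorname{SL}_2(\mathbb{R})$.

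\emph{Rank-one subgroup.} Consider the subgroup $\mathbf{G}_X\subseteq \mathbf{G}$ generated by $E_{[X]}(t)$ and $E_{[TX]}(t)$ for $t\in\mathbb{R}$. It contains $h_{[X]}(t)$, because
\begin{align*}
h_{[X]}(t)=n_{[X]}(t)n_{[X]}^{-1} \quad\text{and}\quad n_{[X]}(t)=E_{[X]}(t)E_{[TX]}(t^{-1})E_{[X]}(t).
\end{align*}
The elements $u_X$, $u_{TX}$, $H_X$ span a copy of $\mathfrak{sl}_2$ inside $\mathfrak{g}_{\mathbb{Z}}$; this is the Peng--Xiao realization. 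Here $[u_X,u_{TX}]=\pm H_X$, and $\operatorname{ad}H_X$ acts on $u_X$, $u_{TX}$ by $\pm 2$, since $A_{XX}=2$ in type ADE. Hence there is a homomorphism $\psi:\operatorname{SL}_2(\mathbb{R})\to\mathbf{G}$ with
\begin{align*}
\psi\begin{pmatrix}1&a\\0&1\end{pmatrix}=E_{[X]}(a),\qquad \psi\begin{pmatrix}1&0\\ \epsilon c&1\end{pmatrix}=E_{[TX]}(c)
\end{align*}
for a sign $\epsilon=\pm1$. To get $\psi$, choose a complete hereditary subcategory $\mathcal{A}$ in which $X$ is simple projective. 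Transport via the isomorphism $\phi_{\mathcal{A}}$ to the Chevalley basis picture. There, exponentiating a simple root vector and its negative gives the standard map from $\operatorname{SL}_2$, as in Steinberg and Carter.

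\emph{Pinning down $\epsilon$ and the image of the diagonal.} The key data is the formula $n_{[X]}(t)=E_{[X]}(t)E_{[TX]}(t^{-1})E_{[X]}(t)$. In $\operatorname{SL}_2$, with lower entry $\epsilon t^{-1}$, the product
\begin{align*}
\begin{pmatrix}1&t\\0&1\end{pmatrix}\begin{pmatrix}1&0\\ \epsilon t^{-1}&1\end{pmatrix}\begin{pmatrix}1&t\\0&1\end{pmatrix}
\end{align*}
is monomial only when $\epsilon=-1$. In that case it equals $\begin{pmatrix}0&t\\-t^{-1}&0\end{pmatrix}$. Therefore $E_{[TX]}(c)=\psi\begin{pmatrix}1&0\\-c&1\end{pmatrix}$, so
\begin{align*}
E_{[TX]}(-c)=\psi\begin{pmatrix}1&0\\c&1\end{pmatrix}.
\end{align*}
Next,
\begin{align*}
h_{[X]}(t)=n_{[X]}(t)n_{[X]}(1)^{-1}=\psi\left(\begin{pmatrix}0&t\\-t^{-1}&0\end{pmatrix}\begin{pmatrix}0&-1\\1&0\end{pmatrix}\right)=\psi\begin{pmatrix}t&0\\0&t^{-1}\end{pmatrix}.
\end{align*}
As a consistency check, conjugation by this matrix scales $E_{[X]}$ by $t^2=t^{A_{XX}}$, matching Lemma \ref{conjugate}(2).

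\emph{Matrix identity.} It remains to verify
\begin{align*}
\begin{pmatrix}1&a\\0&1\end{pmatrix}\begin{pmatrix}b&0\\0&b^{-1}\end{pmatrix}\begin{pmatrix}1&0\\c&1\end{pmatrix}
=\begin{pmatrix}1&0\\ \frac{c}{ac+b^2}&1\end{pmatrix}\begin{pmatrix}\frac{ac+b^2}{b}&0\\0&\frac{b}{ac+b^2}\end{pmatrix}\begin{pmatrix}1&\frac{a}{ac+b^2}\\0&1\end{pmatrix}.
\end{align*}
Both sides equal
\begin{align*}
\begin{pmatrix}b+ac/b&a/b\\ c/b&1/b\end{pmatrix}.
\end{align*}
This is the $\operatorname{SL}_2$ relation behind Definition \ref{G>=0}(3). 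Applying $\psi$ gives the lemma.

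\emph{Main obstacle.} The main difficulty is the sign bookkeeping in constructing $\psi$. This covers the sign in $[u_X,u_{TX}]=\pm H_X$ and whether $E_{[TX]}(c)$ corresponds to $\pm c$ in the lower corner. I would sidestep it by determining $\epsilon$ solely from the already-established formulas for $n_{[X]}(t)$ and $h_{[X]}(t)$, as above. Those formulas force $\epsilon=-1$, which explains the minus sign in $E_{[TX]}(-c)$ in the statement.
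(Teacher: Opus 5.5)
Your proposal is correct and is essentially the paper's proof: both verify the $2\times 2$ identity in $\operatorname{SL}_2(\mathbb{R})$ and push it through the homomorphism sending $\begin{pmatrix}1&a\\0&1\end{pmatrix}\mapsto E_{[X]}(a)$, $\begin{pmatrix}1&0\\-a&1\end{pmatrix}\mapsto E_{[TX]}(a)$ and $\operatorname{diag}(a,a^{-1})\mapsto h_{[X]}(a)$. The paper simply asserts this homomorphism, whereas you also justify its existence and derive the sign from $n_{[X]}(t)=E_{[X]}(t)E_{[TX]}(t^{-1})E_{[X]}(t)$; that step implicitly uses that $n_{[X]}(t)$ sends $u_X$ into $\mathbb{R}u_{TX}$, which forces the corresponding matrix to be antidiagonal, and it is a sound extra check.
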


\begin{proof}
    This Lemma follows from 
    \begin{align*}
        \begin{pmatrix}
            1 & a \\ 0 & 1 
        \end{pmatrix}
        \begin{pmatrix}
            b & 0 \\ 0 & b^{-1}
        \end{pmatrix}
        \begin{pmatrix}
            1 & 0 \\ c & 1
        \end{pmatrix} 
        = 
        \begin{pmatrix}
            1 & 0 \\ \frac{c}{ac+b^2} & 1
        \end{pmatrix}
        \begin{pmatrix}
            \frac{ac+b^2}{b} & 0 \\ 0 & \frac{b}{ac+b^2}
        \end{pmatrix}
        \begin{pmatrix}
            1 & \frac{a}{ac+b^2} \\ 0 & 1
        \end{pmatrix}
    \end{align*}
    and the group homomorphism $\operatorname{SL}_2(\mathbb{R})\rightarrow \mathbf{G}$ mapping $\begin{pmatrix}
            1 & a \\ 0 & 1 
        \end{pmatrix}$ to $E_{[X]}(a)$, $\begin{pmatrix}
            1 & 0 \\ -a & 1
        \end{pmatrix} $ to $E_{[TX]}(a)$ and $\begin{pmatrix}
            a & 0 \\ 0 & a^{-1}
        \end{pmatrix}$ to $h_{[X]}(a)$.
\end{proof}

\begin{lemma}
    For $X,Y\in \operatorname{ind}\mathcal{R}$, $X\ncong Y,X\ncong TY$, and $a,b,c\in \mathbb{R}$, 

    (1) If $\overline{n}_{[X]}\overline{n}_{[Y]}=\overline{n}_{[Y]}\overline{n}_{[X]}$, then $E_{[X]}(a)E_{[Y]}(b)=E_{[Y]}(b)E_{[X]}(a)$;

    (2) If $\overline{n}_{[X]}\overline{n}_{[Y]}\overline{n}_{[X]}=\overline{n}_{[Y]}\overline{n}_{[X]}\overline{n}_{[Y]}$ and $a+c\neq 0$, then 
    \begin{align*}
        E_{[X]}(a) E_{[Y]}(b) E_{[X]}(c) = E_{[Y]}(\frac{bc}{a+c}) E_{[X]}(a+c) E_{[Y]}(\frac{ab}{a+c}).
    \end{align*}
\end{lemma}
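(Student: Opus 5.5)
The plan is to translate the Weyl group hypothesis into a statement about the extensions $L_{X,Y,i,j}$ and then apply the commutator formula, Proposition \ref{commutator}. Since $X\ncong Y$ and $X\ncong TY$, the images $\overline{n}_{[X]},\overline{n}_{[Y]}$ are the reflections $s_\alpha,s_\beta$ in the roots $\alpha,\beta$ of $X,Y$, with $\alpha\neq\pm\beta$. In a simply-laced root system the order of $s_\alpha s_\beta$ is $2$ exactly when $(\alpha|\beta)=0$, and $3$ exactly when $(\alpha|\beta)=\pm1$. In the first case $\alpha+\beta$ is not a root, so $[L_{X,Y,1,1}]$ does not exist. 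The same holds for every $[L_{X,Y,i,j}]$ with $i,j>0$, because these are defined inductively starting from $L_{1,1}$.

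\textbf{Part (1).} Proposition \ref{commutator} now gives $(E_{[X]}(a),E_{[Y]}(b))$ as an empty product, hence the identity. This is exactly the claimed commutation.

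\textbf{Part (2).} Here $(\alpha|\beta)=\pm1$. I would split according to the sign.

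If $(\alpha|\beta)=-1$, then $\alpha+\beta$ is a root and $2\alpha+\beta,\alpha+2\beta$ are not, since the root system is simply laced. So $L_{1,1}$ exists, while $L_{2,1}$ and $L_{1,2}$ do not. Proposition \ref{commutator} gives
\[(E_{[X]}(t),E_{[Y]}(s))=E_{[L]}(\gamma ts),\]
where $L=L_{X,Y,1,1}$ and $\gamma=\gamma_{XY}^{L}=\pm1$. Moreover $E_{[L]}$ commutes with both $E_{[X]}$ and $E_{[Y]}$, because $\alpha+(\alpha+\beta)$ and $\beta+(\alpha+\beta)$ are not roots, so the same commutator formula again gives trivial commutators. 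The identity then becomes a direct check. On the left, move $E_{[X]}(c)$ past $E_{[Y]}(b)$ to get
\[E_{[X]}(a+c)\,E_{[Y]}(b)\,E_{[L]}(\pm bc).\]
On the right, rewrite $E_{[Y]}(\tfrac{bc}{a+c})E_{[X]}(a+c)$ using the commutator. Both sides then have the form $E_{[X]}(a+c)E_{[Y]}(b)E_{[L]}(\ast)$, and I need the $L$-coefficients to agree. This is the main obstacle.

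On the right the coefficient comes out as $\pm\gamma\, bc$. Agreement requires $\gamma_{YX}^{L}=-\gamma_{XY}^{L}$, which follows from the definition $\gamma_{XY}^L=\varphi_{XY}^L(1)-\varphi_{YX}^L(1)$. It also requires the signs coming from $(E_{[Y]}(s),E_{[X]}(t))=(E_{[X]}(t),E_{[Y]}(s))^{-1}$ to be tracked consistently. I would carry out this bookkeeping carefully; it works precisely because the product of the two root subgroups is a Heisenberg-type group, and the formula in (2) is the standard type $A_2$ identity $x_1(a)x_2(b)x_1(c)=x_2(\tfrac{bc}{a+c})x_1(a+c)x_2(\tfrac{ab}{a+c})$. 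As a cross-check, I would realize $E_{[X]},E_{[Y]},E_{[L]}$ inside a copy of $\operatorname{SL}_3$, as unipotent matrices $I+ae_{12}$, $I+be_{23}$ and their commutator. This gives an embedding $\operatorname{SL}_3(\mathbb{R})\to\mathbf{G}$ analogous to the $\operatorname{SL}_2$ map used in the preceding lemma, and the identity is then verified by multiplying $3\times3$ matrices.

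If $(\alpha|\beta)=1$, then $\alpha-\beta$ is a root. I would show the statement can fail to make sense here, or else reduce to the previous case. The cleanest route is to note that in this case $L_{X,Y,1,1}$ does not exist, so $E_{[X]}$ and $E_{[Y]}$ commute by Proposition \ref{commutator}. The identity then reduces to
\[E_{[X]}(a+c)E_{[Y]}(b)=E_{[Y]}(b)E_{[X]}(a+c),\]
which holds since $\tfrac{bc}{a+c}+\tfrac{ab}{a+c}=b$. So both sign cases are covered. The only genuinely delicate point is the sign bookkeeping in the case $(\alpha|\beta)=-1$, which I would settle via the $\operatorname{SL}_3$ embedding.
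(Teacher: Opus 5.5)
Your proof is correct. Its computational core is the paper's: in the non-commuting case there is a single correction term $E_{[L]}$, $L=L_{X,Y,1,1}$, which is central in the group generated by $E_{[X]}$ and $E_{[Y]}$, and both sides of (2) are compared via Proposition \ref{commutator}.

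The difference is in how the Weyl group hypotheses are turned into information about extensions. The paper works through the symbols $\omega$: in (2) it asserts $[\omega_Y(X)]=[\omega_X(Y)]$ and concludes that $L_{X,Y,1,1}$ exists. Your classification by $(\alpha|\beta)\in\{0,\pm1\}$ is more careful. When $(\alpha|\beta)=1$ (e.g.\ the roots $\alpha_1$ and $\alpha_1+\alpha_2$ in type $A_2$), the braid relation still holds, but $s_\beta(\alpha)=\alpha-\beta=-s_\alpha(\beta)$. So only $[\omega_Y(X)]=[T\omega_X(Y)]$ holds, and $L_{X,Y,1,1}$ does not exist. In that case (2) is true for exactly the reason you give: $E_{[X]}$ and $E_{[Y]}$ commute. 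The paper's argument passes over this case, which never arises for pairs of simple objects since there $(\alpha_i|\alpha_j)\le 0$. Your argument covers it.

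Two minor points on presentation.
\begin{itemize}
\item The sign bookkeeping you flag as the main obstacle is not one. From $(E_{[X]}(t),E_{[Y]}(s))=E_{[L]}(\gamma ts)$ with $\gamma=\gamma_{XY}^{L}$, and centrality of $E_{[L]}$, you get $E_{[Y]}(s)E_{[X]}(t)=E_{[X]}(t)E_{[Y]}(s)E_{[L]}(-\gamma ts)$. Hence both sides of (2) equal $E_{[X]}(a+c)E_{[Y]}(b)E_{[L]}(-\gamma bc)$, whatever the value of $\gamma$. No relation between $\gamma_{XY}^L$ and $\gamma_{YX}^L$ is needed. Write that line out, and drop the $\operatorname{SL}_3$ cross-check, which would require an embedding $\operatorname{SL}_3(\mathbb{R})\to\mathbf{G}$ that nothing in the paper provides.
\item Drop the hedge that the statement ``can fail to make sense'' when $(\alpha|\beta)=1$. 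It makes sense there, and your commuting argument proves it.
\end{itemize}
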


\begin{proof}
    (1) If $\overline{n}_{[X]}\overline{n}_{[Y]}=\overline{n}_{[Y]}\overline{n}_{[X]}$, then $\overline{n}_{[\omega_Y(X)]}=\overline{n}_{[X]}$, which implies $[\omega_Y(X)]=[X]$ (By the definition of $\omega_Y(X)$, we exclude the case that $[\omega_Y(X)]=[TX]$).
    This means there doesn't exist indecomposable extension of $X$ and $Y$, and by the commutator formula, we have $E_{[X]}(a)E_{[Y]}(b)=E_{[Y]}(b)E_{[X]}(a)$.

    (2) If $\overline{n}_{[X]}\overline{n}_{[Y]}\overline{n}_{[X]}=\overline{n}_{[Y]}\overline{n}_{[X]}\overline{n}_{[Y]}$, then $\overline{n}_{[\omega_Y(X)]}=\overline{n}_{[\omega_X(Y)]}$. 
    Similarly, we have $[\omega_Y(X)]=[\omega_X(Y)]$.
    By the assumption that $\mathcal{R}$ is of type ADE, this implies that there exists an indecomposable extension $L=L_{X,Y,1,1}$ of $X$ and $Y$, and the commutator relation is $(E_{[X]}(a),E_{[Y]}(b))=E_{[L]}(\gamma_{XY}^Lab)$. Then 
    \begin{align*}
        E_{[X]}(a) E_{[Y]}(b) E_{[X]}(c) &= E_{[Y]}(b) E_{[X]}(a+c) E_{[L]}(\gamma_{XY}^L ab)\\
         &= E_{[Y]}(\frac{bc}{a+c}) E_{[X]}(a+c) E_{[Y]}(\frac{ab}{a+c}).
    \end{align*}
    The Lemma is proved.
\end{proof}

We arbitrarily fix a complete section of $\mathcal{R}$, and get a complete hereditary subcategory $\mathcal{B}$ of $\mathcal{R}$.
Let $\{S_1,\cdots,S_n\}$ be a set of representatives of isomorphism classes of simple objects in $\mathcal{B}$.

Let $\mathbf{U}^+_{\geq 0}$ be the submonoid of $\mathbf{U}^+$ generated by $E_{[S_i]}(t)$ for $t\geq 0$, $i=1,\cdots,n$.
Let $\mathbf{U}^-_{\geq 0}$ be the submonoid of $\mathbf{U}^-$ generated by $E_{[TS_i]}(-t)$ for $t\geq 0$, $i=1,\cdots,n$.
Let $\mathbf{H}_{>0}$ be the submonoid of $\mathbf{H}$ generated by $h_{[S_i]}(t)$ for $t>0$, $i=1,\cdots,n$.
Let $\mathbf{G}_{\geq 0}$ be the submonoid of $\mathbf{G}$ generated by $E_{[S_i]}(t), E_{[TS_i]}(-t)$ for $t\geq 0$, and $h_{[S_i]}(t)$ for $t>0$, $i=1,\cdots,n$.
Then $\mathbf{G}_{\geq 0}=\mathbf{U}^+_{\geq 0}\mathbf{H}_{>0}\mathbf{U}^-_{\geq 0}=\mathbf{U}^-_{\geq 0}\mathbf{H}_{>0}\mathbf{U}^+_{\geq 0}$, and Lemma \ref{utu} holds for $\mathbf{G}_{\geq 0},\mathbf{U}^+_{\geq 0},\mathbf{H}_{>0},\mathbf{U}^-_{\geq 0}$ replacing $G_{\geq 0},U^+_{\geq 0},T_{>0},U^-_{\geq 0}$.

For any $n\in \mathbf{N}$, let $(i_1,\cdots,i_t)$ be a reduced expression for $\overline{n}$.
Define 
\begin{align*}
    &\mathbf{U}^+(\overline{n})=\{ E_{[S_{i_1}]}(a_1)\cdots E_{[S_{i_t}]}(a_t)\mid a_1,\cdots,a_t \in \mathbb{R}_{>0}  \},\\
     &\mathbf{U}^-(\overline{n})=\{ E_{[TS_{i_1}]}(-a_1)\cdots E_{[TS_{i_t}]}(-a_t)\mid a_1,\cdots,a_t \in \mathbb{R}_{>0}  \}.
\end{align*}
Then we have Proposition \ref{uw} holds for $\mathbf{U}^+(\overline{n})$ replacing $U^+(w)$.
Similar for $\mathbf{U}^-(\overline{n})$.
We can easily deduce that $\mathbf{U}^+_{\geq 0}\cong \mathcal{U}_{\geq 0}\cong \mathbf{U}^-_{\geq 0}$.

\begin{proposition}
    There is an isomorphism of monoids $\psi:\mathcal{G}_{\geq 0}\rightarrow \mathbf{G}_{\geq 0}$.
\end{proposition}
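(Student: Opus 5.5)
We define $\psi$ on generators by
\begin{align*}
\psi(i^a)=E_{[S_i]}(a),\qquad \psi((-i)^a)=E_{[TS_i]}(-a),\qquad \psi(\underline{i}^b)=h_{[S_i]}(b),
\end{align*}
for $i\in I=\{1,\dots,n\}$, $a\geq 0$, $b>0$. The proof then has two parts: show that $\psi$ is a well-defined monoid homomorphism by checking the relations (1)--(6) of Definition \ref{G>=0}, and then show that it is bijective. Surjectivity is immediate, since the images of the generators are exactly the generators of $\mathbf{G}_{\geq 0}$.

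For well-definedness we verify the relations one by one.
\begin{itemize}
\item Relation (1) holds because $t\mapsto E_{[X]}(t)$ is a one-parameter subgroup.
\item Relation (2), for $\epsilon=1$, follows from the second lemma above applied to $X=S_i$, $Y=S_j$. If $a_{ij}=0$, there is no indecomposable extension of $S_i$ and $S_j$, so part (1) gives commutation. If $a_{ij}=-1$, part (2) gives the braid relation $E(a)E(b)E(c)=E(\frac{bc}{a+c})E(a+c)E(\frac{ab}{a+c})$, which is exactly Lusztig's bijection in the simply-laced case. For $\epsilon=-1$ we apply the same lemma to $TS_i,TS_j$ with negated parameters; the formula is invariant under $(a,b,c)\mapsto(-a,-b,-c)$.
\item Relation (3) is the preceding $\operatorname{SL}_2$ lemma with $X=S_i$.
\item Relation (4) follows from $h_{[X]}(a)h_{[X]}(b)=h_{[X]}(ab)$ together with the commutativity of $\mathbf{H}$ (Lemma \ref{conjugate}(5)).
\item Relation (5) is Lemma \ref{conjugate}(2). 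For $\epsilon=1$ we need $A_{S_jS_i}=a_{ji}$. For $\epsilon=-1$ we use $A_{S_j,TS_i}=-a_{ji}$, which holds because $H_{TS_i}=-H_{S_i}$.
\item Relation (6) uses the commutator formula (Proposition \ref{commutator}) for $X=S_i$, $Y=TS_j$ with $i\neq j$. The element $L_{S_i,TS_j,1,1}$ would have class $H_{S_i}-H_{S_j}$, which is not a root. So no indecomposable extension exists and the root elements commute.
\end{itemize}

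For injectivity we use normal forms on both sides. In $\mathcal{G}_{\geq 0}$, relations (3), (5) and (6) let us move all $(-i)$ symbols to the right and all $\underline{i}$ symbols to the middle. Hence every element can be written as $u\,t\,u'$, with $u$ a word in the $i^a$, $t$ a product of the $\underline{i}^b$, and $u'$ a word in the $(-i)^c$. This is Lusztig's argument in \cite[2.10]{TR2}. Now suppose $\psi(utu')=\psi(vsv')$. The version of Lemma \ref{utu} for $\mathbf{G}_{\geq 0}$ stated above gives $\psi(u)=\psi(v)$, $\psi(t)=\psi(s)$ and $\psi(u')=\psi(v')$.
\begin{itemize}
\item The isomorphisms $\mathcal{U}_{\geq 0}\cong\mathbf{U}^{\pm}_{\geq 0}$ noted just before the statement, which follow from Proposition \ref{uw} for $\mathbf{U}^{\pm}(\overline{n})$, give $u=v$ and $u'=v'$ in $\mathcal{U}_{\geq 0}$. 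These embed into $\mathcal{G}_{\geq 0}$ compatibly, since relations (1) and (2) are the same.
\item For the torus part, write $t=\prod_i\underline{i}^{t_i}$ and $s=\prod_i\underline{i}^{s_i}$. The criterion for $\prod_i h_{[S_i]}(t_i)=1$ forces $\prod_i (t_i/s_i)^{a_{ij}}=1$ for all $j$. Since the $t_i/s_i$ are positive reals and the Cartan matrix is nondegenerate, taking logarithms gives $t_i=s_i$.
\end{itemize}

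The main obstacle is the normal-form step: showing that every word in $\mathcal{G}_{\geq 0}$ reduces to the shape $u\,t\,u'$ using only relations (1)--(6). A term $(-i)^c\,i^a$ with the same index must be rewritten via relation (3). Relation (3) has the order $i^a\underline{i}^b(-i)^c$, so we first insert $\underline{i}^1$ using (4) and then apply (3) in the reversed form obtained by inverting the $\operatorname{SL}_2$ identity. We would cite \cite[2.10]{TR2} for this rewriting rather than redo it. Alternatively, we could argue directly that $\mathcal{G}_{\geq 0}\cong G_{\geq 0}$, identify $G_{\geq 0}$ with $\mathbf{G}_{\geq 0}$ through $\mathbf{G}(\mathcal{R},\mathbb{R})$ as a split group with pinning $x_i=E_{[S_i]}$, $y_i(c)=E_{[TS_i]}(-c)$, $\chi_i=h_{[S_i]}$, and check that the $\operatorname{SL}_2$-homomorphism in the $\operatorname{SL}_2$ lemma matches this pinning.
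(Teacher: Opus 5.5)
Your proposal is correct and follows essentially the paper's proof: you define $\psi$ on generators, check the relations, and get injectivity from three ingredients. These are the unique factorization $u^+ t u^-$, the cell parametrizations of $\mathbf{U}^{\pm}_{\geq 0}$, and the logarithm/Cartan-matrix argument showing $\mathbb{R}^n_{>0}\to\mathbf{H}_{>0}$ is bijective. The paper instead packages injectivity by comparing the parametrization $\bigsqcup\mathbb{R}_{>0}^{l(\overline{n})+l(\overline{n}')+n}\to\mathbf{G}_{\geq 0}$ with Lusztig's for $G_{\geq 0}$ through the known isomorphism $G_{\geq 0}\cong\mathcal{G}_{\geq 0}$, which is your stated alternative; you use the normal form in $\mathcal{G}_{\geq 0}$ directly and also spell out the relation checks that the paper leaves implicit.
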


\begin{proof}
    Define $\psi(i^a)=E_{[S_i]}(a),\psi((-i)^a)=E_{[TS_i]}(-a),\psi(\underline{i}^a)=h_{[S_i]}(a)$. 
    Since the corresponding generators of $\mathbf{G}_{\geq 0}$ satisfies the relations in Definition \ref{G>=0}, $\psi$ is a well-defined homomorphism of monoids, and is surjective. 

    The surjective map $\mathbb{R}^n \rightarrow \mathbf{H}$ restricts to a surjective map $\mathbb{R}^n_{>0}\rightarrow \mathbf{H}_{>0}$, mapping $(a_1,\cdots,a_n)$ to $h_{[S_1]}(a_1)\cdots h_{[S_n]}(a_n)$.
    If there exists $(t_1,\cdots,t_n)\in \mathbb{R}^n_{>0}$ such that $h_{[S_1]}(t_1)\cdots h_{[S_n]}(t_n)=1$, then $\prod\limits_{i=1}^n t_i^{a_{ij}} = 1$, for all $ j=1,\cdots,n$.
    Since $t_1,\cdots,t_n>0$, we can take the logarithm of both sides of the equality, and obtain that 
    \begin{align*}
        0=\operatorname{log}(\prod\limits_{i=1}^n t_i^{a_{ij}})=\sum_{i=1}^n a_{ij}\operatorname{log} t_i.
    \end{align*}  
    Since the Cartan matrix $(a_{ij})$ is invertible, so does its transpose.
    Then the system of equations has only the trivial solution, that is, $\operatorname{log}t_i=0$ for all $i=1,\cdots,n$. 
    Hence $t_i=1$ for all $i=1,\cdots,n$, which means the restriction $\mathbb{R}^n_{>0}\rightarrow \mathbf{H}_{>0}$ is bijective.

    Then we have a bijection $\bigsqcup_{\overline{n},\overline{n}'\in \mathbf{N}/\mathbf{H}} \mathbb{R}_{>0}^{l(\overline{n})+l(\overline{n}')+n}\rightarrow \mathbf{G}_{\geq 0}$, which coincides with the composition of the bijection $\bigsqcup_{w,w'\in W} \mathbb{R}_{>0}^{l(w)+l(w')+n}\rightarrow G_{\geq 0}$, the isomorphism $G_{\geq 0}\rightarrow \mathcal{G}_{\geq 0}$ and $\psi$ (identifying $\mathbf{N}/\mathbf{H}$ and $W$).
    This shows that $\psi$ is injective, and thus an isomorphism of monoids. 
    The Proposition is proved.
\end{proof}

We fix an $n_0\in \mathbf{N}$ such that its length $l(\overline{n}_0)$ is the largest.
Let $\mathbf{U}^+_{>0}=\mathbf{U}^+(\overline{n}_0)$, $\mathbf{U}^-_{>0}=\mathbf{U}^-(\overline{n}_0)$, and $\mathbf{G}_{>0}=\mathbf{U}^+_{>0} \mathbf{H}_{>0}\mathbf{U}^-_{>0}$.
The Theorem \ref{dense} and the Proposition \ref{u+-bij} hold for $\mathbf{U}^+_{>0},\mathbf{U}^+_{\geq 0},\mathbf{U}^+,\mathbf{U}^-_{>0},\mathbf{U}^-_{\geq 0},\mathbf{U}^-,\mathbf{G}_{>0},\mathbf{G}_{\geq 0},\mathbf{G}$, replacing $U^+_{>0},U^+_{\geq 0},U^+$, $U^-_{>0},U^-_{\geq 0},U^-,G_{>0},G_{\geq 0},G$.

\subsection{Fundamental Region from Root Category}

In this subsection, we define some regions in $\mathbb{R}_{>0}^m$, which can be regarded as obtained from the root category (via bijections).

Again, we arbitrarily fix a complete section of $\mathcal{R}$, and get a complete hereditary subcategory $\mathcal{B}$ of $\mathcal{R}$.
Let $\{S_1,\cdots,S_n\}$ be a set of representatives of isomorphism classes of simple objects in $\mathcal{B}$.
Let $(i_1,\cdots,i_m)$ be a reduced expression of $\overline{n}_0$. 
For $2\leq k \leq m$, define 
\begin{align*}
    \varepsilon_k = \eta_{S_{i_1},\omega_{S_{i_1}}\cdots \omega_{S_{i_{k-1}}}(S_{i_k})} \eta_{S_{i_{2}},\omega_{S_{i_{2}}}\cdots \omega_{S_{i_{k-1}}}(S_{i_k})} \cdots \eta_{S_{i_{k-1}},\omega_{S_{i_{k-1}}}(S_{i_k})},
\end{align*}
and let $\varepsilon_1=1$.
Or equivalently, let $\varepsilon_k\in \{\pm 1\}$ such that 
\begin{align*}
    n_{[S_{i_{k-1}}]}\cdots n_{[S_{i_1}]} u_{\omega_{S_{i_1}}\cdots \omega_{S_{i_{k-1}}}(S_{i_k})} = \varepsilon_k u_{S_{i_k}}.
\end{align*}
We call these $\varepsilon_k$ the Tits signs for $(i_1,\cdots,i_m)$.

For simplicity, we write $n_i$ for $n_{[S_i]}$, for $i=1,\cdots,n$.

\begin{lemma}
    For $i\neq j$ in $\{1,\cdots,n\}$, we have 

    (1) If $\overline{n}_i\overline{n}_j=\overline{n}_j\overline{n}_i$, then $n_in_j=n_jn_i$;

    (2) If $\overline{n}_i\overline{n}_j\overline{n}_i=\overline{n}_j\overline{n}_i\overline{n}_j$, then $n_in_jn_i=n_jn_in_j$.
\end{lemma}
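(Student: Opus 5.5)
The plan is to use the conjugation relations of Lemma \ref{conjugate}, together with $n_{[X]}=E_{[X]}(1)E_{[TX]}(1)E_{[X]}(1)$, to move one $n$ past the others. The target relation then reduces to an identity of the form $n_{[Z]}=n_{[Z']}$ for two objects $Z,Z'$ obtained through the $\omega$-action.

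For (1), the commuting case, we rewrite $n_in_jn_i^{-1}$. By Lemma \ref{conjugate}(3) with $t=1$,
\[
n_in_jn_i^{-1}=n_{[\omega_{S_i}(S_j)]}(\eta_{S_iS_j}).
\]
As in the proof of the previous lemma, the hypothesis $\overline{n}_i\overline{n}_j=\overline{n}_j\overline{n}_i$ forces $[\omega_{S_i}(S_j)]=[S_j]$. In the ADE case this happens exactly when $S_i$ and $S_j$ have no indecomposable extension, i.e.\ $A_{S_iS_j}=0$, and then $p=q=0$. It remains to check that $\eta_{S_iS_j}=1$. The cleanest way is to compute $n_iu_{S_j}$ directly. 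The previous lemma gives $E_{[S_i]}(a)E_{[S_j]}(b)=E_{[S_j]}(b)E_{[S_i]}(a)$, and the same argument applies to $TS_i$ with $S_j$, since $TS_i$ and $S_j$ also have no indecomposable extension. Hence $\operatorname{ad}u_{S_i}$ and $\operatorname{ad}u_{TS_i}$ both kill $u_{S_j}$, so $n_iu_{S_j}=u_{S_j}$ and $\eta_{S_iS_j}=1$. Therefore $n_in_jn_i^{-1}=n_j(1)=n_j$.

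For (2), the braid case, we use Lemma \ref{conjugate}(3) twice. The relation $n_in_jn_i=n_jn_in_j$ is equivalent to
\[
(n_in_j)\,n_i\,(n_in_j)^{-1}=n_j .
\]
The left side equals $n_{[\omega_{S_i}\omega_{S_j}(S_i)]}(\varepsilon)$ for a sign $\varepsilon$ that is a product of two $\eta$'s, possibly times a power of $-1$ coming from $h$-factors. Since $A_{S_iS_j}=-1$, one computes $\omega_{S_j}(S_i)=L_{S_j,S_i,1,1}=:L$, the indecomposable extension, and $\omega_{S_i}(L)=S_j$. So the left side is $n_j(\varepsilon)$ with $\varepsilon=\eta_{S_jS_i}\,\eta_{S_i L}$, and we need $\varepsilon=1$. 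Write $\varepsilon$ in terms of $\gamma$'s: from the definitions, $\eta_{S_jS_i}$ is essentially $\gamma^{L}_{S_jS_i}$ (from the one surviving term of $\exp(\operatorname{ad}u_{S_j})$), and $\eta_{S_iL}$ involves $\gamma^{S_j}_{TS_i,L}$.

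The main obstacle is pinning down this sign. Rather than chasing the $\gamma$ values through Ringel's tables, I would compute inside the rank-two subgroup. The objects $S_i,S_j,L$ and their shifts form a root subsystem of type $A_2$, and the $E$'s attached to them satisfy the commutator relation of the previous lemma. These relations define a homomorphism from $\operatorname{SL}_3(\mathbb{R})$, after adjusting the sign of $u_L$ so that $\gamma^{L}_{S_iS_j}=\pm1$ matches the matrix commutator; I expect this to be a standard argument. Under it, $n_i$ and $n_j$ map to the elements $E(1)E'(1)E(1)$ of the two simple $\operatorname{SL}_2$'s, and there the braid relation $n_in_jn_i=n_jn_in_j$ can be checked by a direct $3\times3$ matrix multiplication. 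Since the $n$'s are expressed purely through $E_{[S_i]},E_{[TS_i]},E_{[S_j]},E_{[TS_j]}$, the relation transfers to $\mathbf{G}$.

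The delicate point is that the homomorphism must send $E_{[TS_i]}(t)$ to the lower unipotent element with the same sign convention used in the first lemma of this subsection, where $E_{[TX]}(a)$ corresponds to $\begin{pmatrix}1&0\\-a&1\end{pmatrix}$. I would fix the embedding using that lemma for each simple object separately. Then I would verify compatibility of the two embeddings via the commutator $(E_{[S_i]}(a),E_{[TS_j]}(b))=1$, which holds because $S_i$ and $TS_j$ have no indecomposable extension when $i\neq j$ are both simple in $\mathcal{B}$.
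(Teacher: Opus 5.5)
Your proposal is correct. For (1) it is essentially the paper's argument; for (2) it finishes by a genuinely different route.

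In (1) the paper simply notes that $E_{[S_i]}(1),E_{[TS_i]}(1)$ commute with $E_{[S_j]}(1),E_{[TS_j]}(1)$, and rearranges the six factors of $n_in_j$. Your detour through Lemma \ref{conjugate}(3) and $\eta_{S_iS_j}=1$ rests on the same input and is fine.

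In (2) the paper carries out exactly your first reduction, conjugating twice to reach $n_j(\eta_{S_i,L}\,\eta_{S_j,S_i})$. It then disposes of the sign in one line: $\eta_{S_j,S_i}=\gamma^{L}_{S_j,S_i}$ and $\eta_{S_i,L}=\gamma^{S_j}_{TS_i,L}=\gamma^{L}_{S_j,S_i}$, so the product is $(\gamma^{L}_{S_j,S_i})^2=1$. No table-chasing is needed.

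Your transfer from $\operatorname{SL}_3(\mathbb{R})$ also works, and the sign issue you flag actually disappears there. Whatever the normalization, $n_i$ and $n_j$ are images of $w_{12}(c_1)$ and $w_{23}(c_2)$ for some nonzero $c_1,c_2$. Conjugating by a diagonal matrix in $\operatorname{GL}_3$ reduces the braid identity to the case $c_1=c_2=1$, which your $3\times 3$ multiplication checks.

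What does need a real argument is the existence of the homomorphism. One commutator relation plus two rank-one embeddings do not by themselves define a map out of $\operatorname{SL}_3(\mathbb{R})$. At best, after checking the commutator relations among all six root subgroups with signs, they give a map out of the Steinberg group, which must then be shown to factor through $\operatorname{SL}_3(\mathbb{R})$.

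The clean justification is Lie-theoretic. By Serre's theorem, the assignment $e_{12}\mapsto u_{S_i}$, $e_{23}\mapsto u_{S_j}$, $e_{21}\mapsto -u_{TS_i}$, $e_{32}\mapsto -u_{TS_j}$ (signs as in the rank-one embedding of Section 3.1) extends to an isomorphism of $\mathfrak{sl}_3$ onto the subalgebra these elements generate. Your check $[u_{S_i},u_{TS_j}]=0$ is precisely the image of $[e_{12},e_{32}]=0$ needed there. The restricted adjoint representation on $\mathfrak{g}_{\mathbb{R}}$ then integrates to $\operatorname{SL}_3(\mathbb{R})$, with $\exp(te_{12})\mapsto E_{[S_i]}(t)$ and so on.

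So the paper's route is shorter once the $\gamma$-identity is available. Yours is more conceptual and avoids sign bookkeeping, at the price of this standard integration step.
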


\begin{proof}
    (1) If $\overline{n}_i\overline{n}_j=\overline{n}_j\overline{n}_i$, then there doesn't exist indecomposable extension of $S_i$ and $S_j$.
    By Proposition \ref{commutator}, we have 
    \begin{align*}
        n_in_j&=E_{[S_i]}(1)E_{[TS_i]}(1)E_{[S_i]}(1)E_{[S_j]}(1)E_{[TS_j]}(1)E_{[S_j]}(1)\\
        &=E_{[S_j]}(1)E_{[TS_j]}(1)E_{[S_j]}(1)E_{[S_i]}(1)E_{[TS_i]}(1)E_{[S_i]}(1)=n_jn_i.
    \end{align*}

    (2) If $\overline{n}_i\overline{n}_j\overline{n}_i=\overline{n}_j\overline{n}_i\overline{n}_j$, then by Lemma \ref{conjugate} we have 
    \begin{align*}
        n_j^{-1} n_i^{-1} (n_j^{-1} n_i n_j) n_i &= n_j^{-1} (n_i^{-1} n_{[\omega_{S_j}(S_i)]}(-\eta_{S_j,S_i}) n_i)\\
        &=  n_j^{-1} n_{[\omega_{S_i}\omega_{S_j}(S_i)]}(\eta_{S_i,\omega_{S_j}(S_i)}\eta_{S_j,S_i})\\
        &= n_j^{-1} n_j(\eta_{S_i,\omega_{S_j}(S_i)}\eta_{S_j,S_i}) = n_j^{-1}n_j =1,
    \end{align*}
    where $\eta_{S_i,\omega_{S_j}(S_i)}=\gamma_{TS_i,L_{S_i,S_j,1,1}}^{S_j}=\gamma_{S_j,S_i}^{L_{S_i,S_j,1,1}}$, $\eta_{S_j,S_i}=\gamma_{S_j,S_i}^{L_{S_i,S_j,1,1}}$, and $(\gamma_{S_j,S_i}^{L_{S_i,S_j,1,1}})^2=1$.
    Thus $n_in_jn_i=n_jn_in_j$. The Lemma is proved.
\end{proof}

\begin{lemma}
    For two reduced expressions $(i_1,\cdots,i_m)$ and $(i_1',\cdots,i_m')$ of $\overline{n}_0$, denote the corresponding Tits signs by $\varepsilon_k$ and $\varepsilon_k'$ respectively.

    (1) If $i_k'=i_{k+1},i_{k+1}'=i_k$ for some $k$, and $i_l'=i_l$ for all $l\neq k,k+1$, then we have $\varepsilon_k'=\varepsilon_{k+1},\varepsilon_{k+1}'=\varepsilon_k$, and $\varepsilon_l'=\varepsilon_l$ for all $l\neq k,k+1$.

    (2) If $(i_k,i_{k+1},i_{k+2})=(i,j,i)$, $(i_k',i_{k+1}',i_{k+2}')=(j,i,j)$ for some $k$ and $i,j$, and $i_l'=i_l$ for all $l\neq k,k+1,k+2$, then we have
    \begin{align*}
        \varepsilon_k'=\varepsilon_{k+2}, \quad \varepsilon_{k+1}'=-\varepsilon_{k+1}, \quad \varepsilon_{k+2}'=\varepsilon_k,
    \end{align*}
    and $\varepsilon_l'=\varepsilon_l$ for all $l\neq k,k+1,k+2$.
\end{lemma}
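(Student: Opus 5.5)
The approach is to use the second, operator-theoretic description of the Tits signs. Set $w_k = n_{i_{k-1}}\cdots n_{i_1}$ and let $\beta_k = \omega_{S_{i_1}}\cdots\omega_{S_{i_{k-1}}}(S_{i_k})$, the $k$-th root in the sequence determined by $(i_1,\cdots,i_m)$. Then $\varepsilon_k$ is characterized by
\begin{align*}
w_k\, u_{\beta_k} = \varepsilon_k u_{S_{i_k}},
\end{align*}
or equivalently $u_{\beta_k} = \varepsilon_k\, n_{i_1}\cdots n_{i_{k-1}} u_{S_{i_k}}$, using $n_i^{-1}u_{X}=\pm n_i u_X$ consistently. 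The cleanest form is
\begin{align*}
\varepsilon_k u_{\beta_k} = \bigl(n_{i_1}\cdots n_{i_{k-1}}\bigr)^{-1\,\ast}\, u_{S_{i_k}}.
\end{align*}
Rather than chase inverses, I will work throughout with $P_k := n_{i_{k-1}}^{-1}\cdots n_{i_1}^{-1}$, so that $u_{\beta_k}$ is $\varepsilon_k$ times $P_k^{-1}u_{S_{i_k}}$. This is the identity $\varepsilon_k u_{\beta_k} = n_{i_1}^{-1}\cdots$ with $\varepsilon_k$ defined via $w_k$. The argument consists of comparing these operators for the two expressions, using the braid relations of the previous lemma.

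For indices $l<k$, the prefixes agree, so $\varepsilon_l'=\varepsilon_l$ trivially. For $l$ beyond the modified block (that is, $l>k+1$ in case (1) and $l>k+2$ in case (2)), the products $n_{i_{l-1}}\cdots n_{i_1}$ and $n_{i'_{l-1}}\cdots n_{i'_1}$ are equal as elements of $\mathbf{G}$ by the previous lemma, since $n_in_j=n_jn_i$ or $n_in_jn_i=n_jn_in_j$ holds exactly, not just modulo $\mathbf{H}$. Also $\beta_l'=\beta_l$, because the $\omega$'s only depend on the Weyl group elements. Hence the defining equation gives $\varepsilon_l'=\varepsilon_l$.

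Case (1) is then immediate. We have $\beta'_k=\beta_{k+1}$ and $\beta'_{k+1}=\beta_k$ (standard, since $s_i,s_j$ commute and $\omega_{S_i}(S_j)=S_j$). In the defining equation for $\varepsilon'_{k+1}$ the extra factor $n_{i_{k+1}}$ acts on $u_{S_{i_k}}$ with sign $\eta_{S_{i_{k+1}}S_{i_k}}$. This sign is $1$: when there is no extension, $n_{[X]}u_Y=u_Y$, which follows from $E_{[X]}(t)$, $E_{[TX]}(t)$ both commuting with $u_Y$ (no indecomposable extension of $X$ with $Y$ or $TY$ in the orthogonal case). So the signs transfer unchanged.

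Case (2) is the main obstacle, and I will reduce it to a rank-two computation. Let $P=n_{i_{k-1}}\cdots n_{i_1}$. We have $\beta'_k=\beta_{k+2}$, $\beta'_{k+1}=\beta_{k+1}$, and $\beta'_{k+2}=\beta_k$, the middle root being $P^{-1}$ applied to $L=L_{S_i,S_j,1,1}$ (up to $T$-shift). Since $P$ is common to both expressions, it suffices to compare the rank-two data: the sign $\sigma$ with $n_i u_{S_j\text{-image}}$ versus $n_j u_{S_i\text{-image}}$, etc. Concretely:
\begin{itemize}
\item For position $k+1$ one needs $n_i u_{\omega_{S_i}(S_j)}=\eta u_{S_j}$ versus $n_j u_{\omega_{S_j}(S_i)}=\eta' u_{S_i}$, both landing on $u_L$-type elements after applying $P$. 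This yields $\varepsilon'_{k+1}\varepsilon_{k+1}=\eta_{S_i,\omega_{S_i}(S_j)}\eta_{S_j,\omega_{S_j}(S_i)}$. Exactly as in the proof of the previous lemma, these are $\gamma_{S_j S_i}^{L}$ and $\gamma_{S_i S_j}^{L}=-\gamma_{S_j S_i}^{L}$ (antisymmetry $\gamma_{XY}^L=-\gamma_{YX}^L$), so the product is $-(\gamma^L_{S_jS_i})^2=-1$. This is the key sign, and checking it precisely against the conventions for $\eta$ (including $\eta_{X,TY}=\eta_{XY}$) is where care is needed.
\item For positions $k$ and $k+2$, the factor is $n_in_j$ acting on $u_{S_i}$ versus $n_j$, $n_i$ compositions. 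Using $n_in_jn_i=n_jn_in_j$ and $\eta_{XY}\eta_{X,\omega_X(Y)}=(-1)^{A_{XY}}=-1$ together with the same $\gamma$ identities, the two sign changes cancel in pairs. This yields $\varepsilon'_k=\varepsilon_{k+2}$ and $\varepsilon'_{k+2}=\varepsilon_k$.
\end{itemize}
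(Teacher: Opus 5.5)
Your proposal is correct and follows essentially the paper's route: the exact braid relations $n_in_j=n_jn_i$ and $n_in_jn_i=n_jn_in_j$ from the preceding lemma handle all positions outside the modified block. Inside the block everything reduces to the rank-two values $\eta_{S_i,L}=\gamma^L_{S_j,S_i}$ and $\eta_{S_j,L}=\gamma^L_{S_i,S_j}$ (with $L=L_{S_i,S_j,1,1}$ and $\eta_{S_i,S_j}\eta_{S_i,L}=\eta_{S_j,S_i}\eta_{S_j,L}=-1$), which give $\varepsilon'_{k+1}=\varepsilon_{k+1}\eta_{S_i,L}\eta_{S_j,L}=-\varepsilon_{k+1}$, $\varepsilon'_k=\varepsilon_{k+2}\eta_{S_i,S_j}\eta_{S_j,L}=\varepsilon_{k+2}$ and $\varepsilon'_{k+2}=\varepsilon_k\eta_{S_j,S_i}\eta_{S_i,L}=\varepsilon_k$, exactly as in the paper. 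You should drop your ``equivalent'' form $u_{\beta_k}=\varepsilon_k\,n_{i_1}\cdots n_{i_{k-1}}u_{S_{i_k}}$ and your $P_k$, because the correct inverse is $n_{i_1}^{-1}\cdots n_{i_{k-1}}^{-1}$, and the two differ by $(-1)^{\sum A}$, which is precisely the paper's $\tilde{\varepsilon}_k$ versus $\varepsilon_k$; you never actually use that form, so the argument is unaffected.
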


\begin{proof}
    (1) In this case, $\overline{n}_{i_k}\overline{n}_{i_{k+1}}=\overline{n}_{i_{k+1}}\overline{n}_{i_k}$. Then $\eta_{S_{i_k},S_{i_{k+1}}}=\eta_{S_{i_{k+1}},S_{i_{k}}}=1$, and by definition we have $\varepsilon_k'=\varepsilon_{k+1},\varepsilon_{k+1}'=\varepsilon_k$.
    It's obvious that $\varepsilon_l'=\varepsilon_l$ for $l<k$. As for $l>k+1$, by the previous Lemma and definition, we have
    \begin{align*}
        \varepsilon_l u_{S_{i_l}} &= n_{i_{l-1}}\cdots n_{i_{k+1}} n_{i_k} \cdots n_{i_1} u_{\omega_{S_{i_1}}\cdots \omega_{S_{i_k}}\omega_{S_{i_{k+1}}} \cdots \omega_{S_{i_{l-1}}}(S_{i_l})} \\
        &= n_{i_{l-1}}\cdots n_{i_{k}} n_{i_{k+1}} \cdots n_{i_1} u_{\omega_{S_{i_1}}\cdots \omega_{S_{i_{k+1}}}\omega_{S_{i_{k}}} \cdots \omega_{S_{i_{l-1}}}(S_{i_l})} = \varepsilon_l' u_{S_{i_l}},
    \end{align*}
    and thus $\varepsilon_l'=\varepsilon_l$.

    (2) In this case, we can calculate that 
    \begin{align*}
        &\eta_{S_i,S_j}=\gamma_{S_i,S_j}^{L_{S_i,S_j,1,1}}, \quad \eta_{S_j,S_i}=\gamma_{S_j,S_i}^{L_{S_i,S_j,1,1}}=-\gamma_{S_i,S_j}^{L_{S_i,S_j,1,1}},\\
        &\eta_{S_j,L_{S_i,S_j,1,1}}=\gamma_{TS_j,L_{S_i,S_j,1,1}}^{S_i}=\gamma_{S_i,S_j}^{L_{S_i,S_j,1,1}}, \quad \eta_{S_i,L_{S_i,S_j,1,1}}=\gamma_{S_j,S_i}^{L_{S_i,S_j,1,1}}.
    \end{align*}
    Thus 
    \begin{align*}
       & \varepsilon_k'=\varepsilon_{k+2}(\gamma_{S_j,S_i}^{L_{S_i,S_j,1,1}})^2=\varepsilon_{k+2},\\
        &  \varepsilon_{k+1}'=\varepsilon_{k+1}\gamma_{S_j,S_i}^{L_{S_i,S_j,1,1}}\gamma_{S_i,S_j}^{L_{S_i,S_j,1,1}}=-\varepsilon_{k+1}, \\
         &  \varepsilon_{k+2}'=\varepsilon_k(\gamma_{S_i,S_j}^{L_{S_i,S_j,1,1}})^2=\varepsilon_k.
    \end{align*}
    Similarly, using the previous Lemma, we obtain that  $\varepsilon_l'=\varepsilon_l$ for all $l\neq k,k+1,k+2$.
    The Lemma is proved.
\end{proof}

\begin{definition}
    For $0\leq k \leq m-1$, we inductively define 
\begin{align*}
    \varphi_k:(\varepsilon_1 \mathbb{R}_{>0})\times \cdots \times (\varepsilon_k \mathbb{R}_{>0}) \times  \mathbb{R}_{>0}^{m-k} \rightarrow (\varepsilon_1 \mathbb{R}_{>0})\times \cdots \times  (\varepsilon_{k+1} \mathbb{R}_{>0}) \times \mathbb{R}_{>0}^{m-k-1}.
\end{align*}
For any $(a_1^{(k)},\cdots,a_m^{(k)})\in (\varepsilon_1 \mathbb{R}_{>0})\times \cdots \times (\varepsilon_k \mathbb{R}_{>0}) \times  \mathbb{R}_{>0}^{m-k}$, let $(a_1^{(k+1)},\cdots,a_m^{(k+1)})=\varphi_k(a_1^{(k)},\cdots,a_m^{(k)})$, such that:

(1) For $1\leq l \leq k$, we have $a_l^{(k+1)}=a_l^{(k)}$.

(2) For $l=k+1$, we have $a_{k+1}^{(k+1)}=\varepsilon_{k+1}(a_{k+1}^{(k)})^{-1}$.

(3) For $k+2\leq l\leq m$ and $i_l\neq i_{k+1}$, we have 
\begin{align*}
    a_l^{(k+1)}=a_l^{(k)}( a_{k+1}^{(k)}+\sum_{\substack{k+1<s<l\\i_s=i_{k+1}}} a_s^{(k)} )^{-A_{S_{i_{k+1}},S_{i_l}}}.
\end{align*}

(4) For $k+2\leq l\leq m$ and $i_l= i_{k+1}$, we have 
\begin{align*}
    a_l^{(k+1)}=a_l^{(k)}( a_{k+1}^{(k)}+\sum_{\substack{k+1<s<l\\i_s=i_{k+1}}} a_s^{(k)} )^{-1} ( a_{k+1}^{(k)}+\sum_{\substack{k+1<s\leq l\\i_s=i_{k+1}}} a_s^{(k)} )^{-1}.
\end{align*}
\end{definition}

\begin{lemma}\label{varphikbij}
The map $\varphi_k$ gives a homeomorphism from $(\varepsilon_1 \mathbb{R}_{>0})\times \cdots \times (\varepsilon_k \mathbb{R}_{>0}) \times  \mathbb{R}_{>0}^{m-k}$ to 
\begin{align*}
    \{ (a_1,\cdots,a_m)\in (\varepsilon_1 \mathbb{R}_{>0})\times \cdots \times  (\varepsilon_{k+1} \mathbb{R}_{>0}) \times \mathbb{R}_{>0}^{m-k-1}| \varepsilon_{k+1}a_{k+1} > \sum_{\substack{k+2<s\leq m\\i_s=i_{k+1}}} a_s\}.
\end{align*}
 (If there doesn't exist $k+2 <s\leq m$ such that $i_s=i_{k+1}$, let $\sum_{\substack{k+2<s\leq m\\i_s=i_{k+1}}} a_s=0$).
\end{lemma}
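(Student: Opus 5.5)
The plan is to write down an explicit inverse of $\varphi_k$ and check that both $\varphi_k$ and this inverse are continuous. Continuity is automatic, since every formula involved is a rational function with nonvanishing denominators on the relevant domain. The coordinates $a_l$ with $l\le k$ are untouched throughout, so I would ignore them.

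\textbf{Step 1 (partial sums).} Fix $(a_1^{(k)},\cdots,a_m^{(k)})$ and put $c=a_{k+1}^{(k)}>0$. Let $l_1<l_2<\cdots<l_r$ be the indices $l>k+1$ with $i_l=i_{k+1}$. Define the partial sums
$$P_0=c,\qquad P_j=c+\sum_{t\le j}a^{(k)}_{l_t}.$$
These are positive and strictly increasing. In this notation, rule (4) of the definition reads
$$a^{(k+1)}_{l_j}=\frac{a^{(k)}_{l_j}}{P_{j-1}P_j}=\frac1{P_{j-1}}-\frac1{P_j}.$$
Rule (3) says that for $l$ with $i_l\ne i_{k+1}$ we have $a^{(k+1)}_l=a^{(k)}_l\,P_{j(l)}^{-A_{S_{i_{k+1}},S_{i_l}}}$, where $j(l)$ is the number of $l_t<l$.

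\textbf{Step 2 (image lies in the target).} Since $a^{(k+1)}_{k+1}=\varepsilon_{k+1}/c$, we get $\varepsilon_{k+1}a^{(k+1)}_{k+1}=1/P_0$. The coordinates $a^{(k+1)}_{l_j}$ are positive, being differences $1/P_{j-1}-1/P_j$ of a strictly decreasing sequence. Summing, the telescoping identity gives
$$\sum_{j}a^{(k+1)}_{l_j}=\frac1{P_0}-\frac1{P_r}<\frac1{P_0}=\varepsilon_{k+1}a^{(k+1)}_{k+1}.$$
This is exactly the defining inequality of the target set, with the sum taken over $s>k+1$, $i_s=i_{k+1}$. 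I read the bound ``$k+2<s$'' in the statement as ``$k+1<s$'', since $s=k+2$ with $i_{k+2}=i_{k+1}$ is impossible anyway for a reduced word. All other coordinates are positive with the correct signs.

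\textbf{Step 3 (inverse).} Given a point $(a_1,\cdots,a_m)$ of the target set, set $c=\varepsilon_{k+1}/a_{k+1}>0$ and $Q_0=1/c$. Recursively define $Q_j=Q_{j-1}-a_{l_j}$. The inequality $\varepsilon_{k+1}a_{k+1}>\sum_j a_{l_j}$ is precisely what guarantees that $Q_r>0$, hence that all $Q_j>0$ and that they strictly decrease. Put $P_j=1/Q_j$ and recover
$$a^{(k)}_{l_j}=P_j-P_{j-1}>0,\qquad a^{(k)}_l=a_l\,P_{j(l)}^{A_{S_{i_{k+1}},S_{i_l}}}\ \text{for } i_l\ne i_{k+1},\qquad a^{(k)}_{k+1}=c.$$
One then checks directly that this map and $\varphi_k$ are mutually inverse. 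The point is that the $P_j$ are determined on both sides by the same data: forward, from the cumulative sums; backward, from the recursion for $Q_j=1/P_j$.

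The only step needing real care is Step 2/3, namely recognizing the telescoping form of rule (4). This form simultaneously yields the strict inequality describing the image and makes the recursion for the inverse solvable with positive values. Once that is seen, the remaining verifications are routine.
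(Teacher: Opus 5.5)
Your proposal is correct and follows the same route as the paper, which also writes down an explicit inverse of $\varphi_k$ and notes that both maps are continuous; the paper's inverse formulas are exactly yours, since its quantities $\varepsilon_{k+1}a_{k+1}-\sum_{k+1<s<l,\, i_s=i_{k+1}}a_s$ are your $Q_{j(l)}=1/P_{j(l)}$. Your telescoping identity $a^{(k+1)}_{l_j}=1/P_{j-1}-1/P_j$ makes explicit what the paper leaves to ``it's easy to check'': it shows that $\varphi_k$ lands in the target set and that the strict inequality is precisely what keeps the inverse well defined and positive; your reading of $k+2<s$ as $k+1<s$ is also justified, because $i_{k+2}\neq i_{k+1}$ in a reduced word.
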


\begin{proof}
    By definition, $\varphi_k$ is continuous.

     Assume $(a_1^{(k+1)},\cdots,a_m^{(k+1)})\in (\varepsilon_1 \mathbb{R}_{>0})\times \cdots \times  (\varepsilon_{k+1} \mathbb{R}_{>0}) \times \mathbb{R}_{>0}^{m-k-1}$ satisfies 
    \begin{align*}
        \varepsilon_{k+1}a_{k+1}^{(k+1)} > \sum_{\substack{k+2<s\leq m\\i_s=i_{k+1}}} a_s^{(k+1)},
    \end{align*}
   
    Let $(a_1^{(k)},\cdots,a_m^{(k)})\in (\varepsilon_1 \mathbb{R}_{>0})\times \cdots \times (\varepsilon_k \mathbb{R}_{>0}) \times  \mathbb{R}_{>0}^{m-k}$ given by:

    (1) For $1\leq l \leq k$, we have $a_l^{(k)}=a_l^{(k+1)}$.

    (2) For $l=k+1$, we have $a_{k+1}^{(k)}=\varepsilon_{k+1}(a_{k+1}^{(k+1)})^{-1}$.

    (3) For $k+2\leq l\leq m$ and $i_l\neq i_{k+1}$, we have 
\begin{align*}
    a_l^{(k)}=a_l^{(k+1)}(\varepsilon_{k+1} a_{k+1}^{(k+1)}-\sum_{\substack{k+1<s<l\\i_s=i_{k+1}}} a_s^{(k+1)} )^{-A_{S_{i_{k+1}},S_{i_l}}}.
\end{align*}

    (4) For $k+2\leq l\leq m$ and $i_l= i_{k+1}$, we have 
\begin{align*}
    a_l^{(k)}=a_l^{(k+1)}(\varepsilon_{k+1} a_{k+1}^{(k+1)}-\sum_{\substack{k+1<s<l\\i_s=i_{k+1}}} a_s^{(k+1)} )^{-1} (\varepsilon_{k+1} a_{k+1}^{(k+1)}-\sum_{\substack{k+1<s\leq l\\i_s=i_{k+1}}} a_s^{(k+1)} )^{-1}.
\end{align*}
It's easy to check that $(a_1^{(k+1)},\cdots,a_m^{(k+1)})=\varphi_k(a_1^{(k)},\cdots,a_m^{(k)})$.
This gives the inverse of $\varphi_k$, which is also continuous. 
Thus $\varphi_k$ is a homeomorphism.
\end{proof}

For any $b_1,\cdots,b_m\in \mathbb{R}_{>0}$, let 
\begin{align*}
    (a_1^{(m)}(b_1,\cdots,b_m),\cdots,a_m^{(m)}(b_1,\cdots,b_m))=(\varepsilon_1 b_1,\cdots,\varepsilon_m b_m).
\end{align*}
For $k<m$, if functions $a_t^{(k+1)}=a_t^{(k+1)}(b_1,\cdots,b_m)$ is defined for $t=1,\cdots,m$, let 
\begin{align*}
    \beta_{k+1}(b_1,\cdots,b_m)= \varepsilon_{k+1}a_{k+1}^{(k+1)} - \sum_{\substack{k+2<s\leq m\\i_s=i_{k+1}}} a_s^{(k+1)}.
\end{align*}
(If there doesn't exist $k+2 <s\leq m$ such that $i_s=i_{k+1}$, let $ \beta_{k+1}(b_1,\cdots,b_m)= \varepsilon_{k+1}a_{k+1}^{(k+1)} $).
Assume that $(b_1,\cdots,b_m)$ satisfies $\beta_{k+1}(b_1,\cdots,b_m)>0$, and then define $a_t^{(k)}=a_t^{(k)}(b_1,\cdots,b_m)$, $t=1,\cdots,m$, such that 
\begin{align*}
    (a_1^{(k)},\cdots,a_m^{(k)})=\varphi_{k}^{-1}(a_1^{(k+1)},\cdots,a_m^{(k+1)}).
\end{align*}
Inductively, we define functions $a_t^{(l)}$ and $\beta_k$ for $1\leq t,k\leq m$ and $0\leq l \leq m $.

\begin{lemma}\label{determine}
    For any $t>k$, $a_t^{(k)}=a_t^{(k)}(b_1,\cdots,b_m)$ is in $\mathbb{R}_{>0}$, and is determined by $b_{k+1},\cdots,b_t$.
\end{lemma}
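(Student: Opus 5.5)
The argument is a descending induction on $k$, from $k=m$ down to $k=0$. Throughout, I treat the functions $a_t^{(k)}$ as defined on the domain where all the previous positivity conditions $\beta_{k+1},\dots,\beta_m>0$ hold, since that is exactly where the inductive definition via $\varphi_k^{-1}$ makes sense. The claim to prove at level $k$ is:
\begin{center}
for every $t>k$, the value $a_t^{(k)}$ lies in $\mathbb{R}_{>0}$ and is a function of $b_{k+1},\dots,b_t$ alone.
\end{center}

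\emph{Base case.} For $k=m$ there is no $t>m$, so the claim is vacuous. It is still convenient to record that $a_t^{(m)}=\varepsilon_t b_t$ depends only on $b_t$.

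\emph{Positivity.} Positivity for $t>k$ is immediate from Lemma \ref{varphikbij}. There $\varphi_k^{-1}$ maps into $(\varepsilon_1\mathbb{R}_{>0})\times\cdots\times(\varepsilon_k\mathbb{R}_{>0})\times\mathbb{R}_{>0}^{m-k}$, and the input $(a^{(k+1)}_1,\dots,a^{(k+1)}_m)$ lies in the image precisely because $\beta_{k+1}>0$. So all coordinates with index $t>k$ are positive.

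\emph{Dependence.} For the dependence claim I use the explicit inverse formulas (2)--(4) written out in the proof of Lemma \ref{varphikbij}.

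For $t=k+1$, formula (2) gives $a_{k+1}^{(k)}=\varepsilon_{k+1}(a_{k+1}^{(k+1)})^{-1}$. The quantity $a_{k+1}^{(k+1)}$ is not covered by the induction hypothesis, since its index is not $>k+1$. So I need the auxiliary observation that $a_l^{(k)}$ for $l\le k$ is unchanged by $\varphi_{k-1}^{-1},\dots,\varphi_0^{-1}$ further down. More precisely, tracing upward I will show that
\[
a_{k+1}^{(k+1)}=a_{k+1}^{(k+2)}=\cdots=a_{k+1}^{(m)}=\varepsilon_{k+1}b_{k+1},
\]
because each $\varphi_j^{-1}$ with $j\ge k+1$ fixes coordinates $l\le j$ by rule (1). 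Hence $a_{k+1}^{(k)}=b_{k+1}^{-1}$, which depends only on $b_{k+1}$.

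For $t\ge k+2$, formulas (3)/(4) express $a_t^{(k)}$ through three kinds of terms:
\begin{itemize}
\item $a_t^{(k+1)}$, which by induction depends on $b_{k+2},\dots,b_t$;
\item $\varepsilon_{k+1}a_{k+1}^{(k+1)}=b_{k+1}$;
\item $a_s^{(k+1)}$ for $k+1<s\le t$, which by induction depends on $b_{k+2},\dots,b_s\subseteq b_{k+2},\dots,b_t$.
\end{itemize}
Combining these, $a_t^{(k)}$ depends only on $b_{k+1},\dots,b_t$, which closes the induction.

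\emph{Main obstacle.} The delicate point is the bookkeeping for the index $k+1$ itself. One has to check carefully that coordinate $k+1$ really is frozen at $\varepsilon_{k+1}b_{k+1}$ through all of $\varphi_{m-1}^{-1},\dots,\varphi_{k+1}^{-1}$. The other thing to verify is that the sums in (3)/(4) only involve indices $s\le t$, and this holds because the range is $k+1<s<l$ or $k+1<s\le l$ with $l=t$. With those checks in place, the rest is a direct reading of the formulas.
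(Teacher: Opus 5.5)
Your proposal is correct and follows the paper's proof essentially step for step. Both use descending induction on $k$, the fact that coordinate $k+1$ stays frozen at $\varepsilon_{k+1}b_{k+1}$ under $\varphi_{m-1}^{-1},\dots,\varphi_{k+1}^{-1}$ (so $a_{k+1}^{(k)}=b_{k+1}^{-1}$), and a direct reading of the inverse formulas (3)--(4), with positivity coming from the definitions; one small wording slip is that $\varphi_{k-1}^{-1}$ does change coordinate $k$, so your first phrasing of the auxiliary observation is off, but the ``more precisely'' version you actually use is the right one.
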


\begin{proof}
    It follows from definition that $a_t^{(k)}>0$ for all $t>k$.
    Note that $a_k^{(k)}=a_k^{(k+1)}=\cdots=a_k^{(m)}=\varepsilon_k b_k$ and $a_k^{(k-1)}=b_k^{-1}$, for all $k$.

    We prove the Lemma by induction on $k$.
    Firstly, $a_m^{(m-1)}=b_m^{-1}$ is determined by $b_m$.
    Assume that for any $t>k+1$, $a_t^{(k+1)}$ is determined by $b_{k+2},\cdots,b_t$.
    We have $a_{k+1}^{(k)}=b_{k+1}^{-1}$ is determined by $b_{k+1}$. For $k+2 \leq t \leq m$, if $i_t\neq i_{k+1}$, we have 
\begin{align*}
    a_t^{(k)}=a_t^{(k+1)}(\varepsilon_{k+1} a_{k+1}^{(k+1)}-\sum_{\substack{k+1<s<t\\i_s=i_{k+1}}} a_s^{(k+1)} )^{-A_{S_{i_{k+1}},S_{i_t}}},
\end{align*}
where $a_t^{(k+1)}$ is determined by $b_{k+2},\cdots,b_t$, $a_{k+1}^{(k+1)}$ is determined by $b_{k+1}$, and $a_s^{(k+1)}$ is determined by $b_{k+2},\cdots,b_s$ with $s<t$.
Thus $a_t^{(k)}$ is determined by $b_{k+1},\cdots,b_t$.
Similar for the case $i_t= i_{k+1}$.
By induction, the Lemma is proved.
\end{proof}

\begin{lemma}
    The $m$ inequalities $\beta_k(b_1,\cdots,b_m)>0$, for $k=1,\cdots,m$, are nontrivial and independent, which means none of them can be deduced from any collection of the others.
\end{lemma}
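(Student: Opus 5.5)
We aim to show that for each $k$ there is a point of $\mathbb{R}_{>0}^m$ with $\beta_l>0$ for all $l\neq k$ and $\beta_k\leq 0$. This gives both claims at once: such a point shows that $\beta_k>0$ does not follow from the other inequalities (independence), and a fortiori that it is not automatic (nontriviality).

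\textbf{Step 1: triangular dependence.} Unwinding the recursion, $\beta_{k}$ is built from $a^{(k)}_k=\varepsilon_k b_k$ and from $a_s^{(k)}$ with $s>k+1$. By Lemma \ref{determine} (applied at level $k$, with indices shifted by one), these are functions of $b_k,\dots,b_m$ only. Moreover, the correction $\sum_{s} a_s^{(k)}$ depends only on $b_{k+1},\dots,b_m$. Hence
\begin{align*}
\beta_k(b_1,\dots,b_m)=b_k-F_k(b_{k+1},\dots,b_m),
\end{align*}
where $F_k\geq 0$ is continuous on the region where $\beta_{k+1},\dots,\beta_m>0$. The function $F_k$ is a finite sum of the positive quantities $a_s^{(k)}$, and it is strictly positive exactly when some $s>k+2$ has $i_s=i_k$. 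In particular, $\beta_k$ is independent of $b_1,\dots,b_{k-1}$ and is strictly increasing in $b_k$.

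\textbf{Step 2: construct the witness points.} We choose $b_m,b_{m-1},\dots$ in decreasing order of index. First fix $b_{k+1},\dots,b_m$ so that $\beta_{k+1},\dots,\beta_m>0$. This is possible by downward induction: at each step we take $b_j$ larger than $F_j$ of the already chosen values. Next, if $F_k>0$ at this point, we take $b_k=F_k$, which gives $\beta_k=0\not>0$. We must then continue the definition past the failed inequality. Because the paper only defines $a^{(k-1)}$ when $\beta_k>0$, we instead take $b_k$ slightly below $F_k$. We should check that the formulas of $\varphi_{k-1}^{-1}$ still make sense there, or else take the limit $b_k\downarrow F_k$. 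Finally we choose $b_{k-1},\dots,b_1$ large so that $\beta_{k-1},\dots,\beta_1>0$. This again uses the triangular form $\beta_l=b_l-F_l$.

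\textbf{Main obstacle.} The obstacle is the case where $F_k\equiv 0$, that is, when no $s>k+2$ has $i_s=i_k$. Then $\beta_k=b_k>0$ would be trivial, contradicting the claim. We therefore need to show that in a reduced word of the longest element, every letter $i_k$ recurs at some position beyond $k+2$ whenever this is relevant. Failing that, we need to reinterpret the statement as asserting independence only among the genuinely constraining $\beta_k$. We would first try to verify, using the structure of reduced words for $\overline{n}_0$ (each simple reflection appears many times, by the counting of roots via $R(\overline{n}_0)$), that such repetitions occur. We would also examine carefully the boundary convention in the definition of $\beta_{k+1}$ (the index bound $k+2<s$) to see exactly which $k$ are covered.

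A second technical point is the extension of $a^{(k-1)}$ beyond $\beta_k>0$. Here we expect to avoid the issue by arguing via open sets. The region $\{\beta_l>0,\ l\neq k\}$ defined using only the level-$l$ formulas is open and meets the hypersurface $\beta_k=0$. So the inequality $\beta_k>0$ cuts it properly, and this already shows that $\beta_k>0$ cannot be deduced from the others.
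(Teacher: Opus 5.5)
\textbf{Where Step~2 breaks.} Your Step~1 is the paper's entire proof. By definition $\beta_k=b_k-\sum_{s>k+1,\ i_s=i_k}a_s^{(k)}$, and by Lemma \ref{determine} the sum depends only on $b_{k+1},\dots,b_l$. So the system is unitriangular in $b$: the coefficient of $b_k$ is $1$ and there is no dependence on $b_1,\dots,b_{k-1}$. The paper takes this as what ``nontrivial and independent'' means. (The range is $s>k+1$, not $s>k+2$; the bound $k+2<s$ in the paper belongs to $\beta_{k+1}$. Since $i_{k+1}\neq i_k$, we have $F_k\not\equiv 0$ exactly when $i_k$ occurs somewhere after position $k$.)

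The gap is in Step~2, where you aim for the stronger claim that each $\beta_k>0$ has a witness in $\mathbb{R}^m_{>0}$ violating it while satisfying all the others. Your proposed way around the main obstacle is false. If $k$ is the last occurrence of its letter (for instance $k=m$), no later $s$ has $i_s=i_k$, so $\beta_k=b_k$ identically, and $\beta_k>0$ already follows from $b\in\mathbb{R}^m_{>0}$. There are exactly $n$ such indices. In the paper's $A_3$ example, $\beta_6=b_6$, $\beta_5=b_5$, $\beta_3=b_3$, and $\Omega_Q$ is cut out by only three inequalities. So no recurrence argument exists. For these $k$ the lemma holds only in the formal triangular sense of Step~1, and you should adopt that reading rather than look for witnesses.

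\textbf{The remaining indices.} For the other $m-n$ indices the witness statement is true, but your fallback does not prove it. The $\beta_l$ with $l<k$ are defined through $\varphi_{k-1}^{-1}$, so only where $\beta_k>0$, and they can have a pole along $\beta_k=0$. Indeed, for $l$ the last occurrence of $i_k$, the second factor in formula (4) for $a_l^{(k-1)}$ is exactly $\beta_k^{-1}$.

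In the example, $\beta_1=b_1-(b_2b_6-b_3)/(b_6\beta_4)$, and $b_2b_6>b_3$ on $\{\beta_2>0,\beta_4>0\}$. Hence $\beta_1\to-\infty$ as $\beta_4\downarrow 0$. The set $\{\beta_l>0,\ l\neq 4\}$ therefore does not meet $\{\beta_4=0\}$, so neither your open-set argument nor the limit $b_k\downarrow F_k$ works.

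What does work is the following.
\begin{enumerate}
\item Regard each $\beta_l=b_l-F_l(b_{l+1},\dots,b_m)$ as a rational function.
\item Pick $(b_k,\dots,b_m)$ in the open set where $\beta_{k+1},\dots,\beta_m>0$ and $0<b_k<F_k$. This set is nonempty because $F_k$ is a nonempty sum of positive terms there.
\item Choose $b_{k-1},\dots,b_1$ one at a time, each exceeding $F_l$ and each step chosen so the point avoids the zero sets of the denominators of the remaining $F_l$.
\end{enumerate}
For example, $(b_1,\dots,b_6)=(1,1,1,1,2,1)$ gives $\beta_4=-1$ and $\beta_l>0$ for all $l\neq 4$ in the paper's example.
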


\begin{proof}
    By definition,
    \begin{align*}
    \beta_{k}= \varepsilon_{k}a_{k}^{(k)} - \sum_{\substack{k+1<s\leq m\\i_s=i_{k}}} a_s^{(k)} = b_{k}-\sum_{\substack{k+1<s\leq m\\i_s=i_{k}}} a_s^{(k)} .
    \end{align*}
    By Lemma \ref{determine}, the sum $\sum_{\substack{k+1<s\leq m\\i_s=i_{k}}} a_s^{(k)}$ is determined by $b_{k+1},\cdots,b_l$ (or is zero), where $l\leq m$ is largest such that $i_l=i_k$.
    So the coefficient of $b_k$ in the function $\beta_k$ is 1, and $\beta_k$ is a rational function of $b_k,b_{k+1},\cdots,b_l$. 
    The result follows.
\end{proof}

\begin{definition}
    For an arbitrarily fixed reduced expression $(i_1,\cdots,i_m)$ of $\overline{n}_0$ and use the notations as above, define a region 
    \begin{align*}
        \Omega_{i_1,\cdots,i_m} = \{ (b_1,\cdots,b_m)\in \mathbb{R}^m_{>0} | \beta_k(b_1,\cdots,b_m)>0, \forall k=1,\cdots,m  \}.
    \end{align*}
\end{definition}

When fixing a complete section of $\mathcal{R}$, we actually give an orientation of the corresponding graph and get a quiver $Q$. 
Let $Q_0=\{1,\cdots,n\}$ be the set of vertices of $Q$.
For each $i\in Q_0$, let $\sigma_i$ be the action of reversing all the arrows at $i$.
Without loss of generality, we may assume 1 is a source in $Q$, 2 is a source in $\sigma_1(Q)$, and so on. Finally, assume $n$ is a source in $\sigma_{n-1}\cdots\sigma_1(Q)$.
Note that $\sigma_n \cdots \sigma_1(Q)=Q$, since each arrow $i\rightarrow j$ reverse twice (when apply $\sigma_i$ and $\sigma_j$).
Let $k$ be an algebraically closed field.
Let
\begin{align*}
    I_1,\cdots,I_n,\tau I_1,\cdots,\tau I_n, \tau^2 I_1,\cdots
\end{align*}
be an ordering of indecomposable objects in $\mathcal{R}$, where $\tau$ is the AR-translation functor.
We consider its intersection with $\operatorname{ind}\mathcal{B}$, which leads to an ordering of the indecomposable $kQ$-modules.
Following \cite{ORT} (note that they use left $kQ$-module while we use right $kQ$-module), we can get a reduced expression for $\overline{n}_0$ by replacing each $\tau^jI_i$ in the sequence by $i$. This reduced expression is called the leftmost word for $\overline{n}_0$ (in $Q$).
In this case, we may also denote the corresponding region by  $\Omega_Q$.

\begin{example}
    We consider the quiver $Q$:
    \begin{align*}
        1\rightarrow 2 \rightarrow 3
    \end{align*}
    and the leftmost word $(1,2,3,1,2,1)$ for $\overline{n}_0$. 
    We have  
    \begin{align*}
        &\varepsilon_1=1, \\
        & \varepsilon_2=\eta_{I_1,I_2}=\gamma_{\tau I_1,I_1}^{I_2}=1,\\
        &\varepsilon_3 = \eta_{I_1,I_3}\eta_{\tau I_1,\tau I_2}=\gamma_{\tau I_2,I_1}^{I_3}\gamma_{\tau^2 I_1,\tau I_1}^{\tau I_2}=1,\\
        &\varepsilon_4 = \eta_{I_1,\tau I_1}\eta_{\tau I_1,I_2}\eta_{\tau^2 I_1,I_1}=(\gamma_{I_1,\tau I_1}^{I_2} )^2 \cdot 1=1,\\
        &\varepsilon_5 = \eta_{I_1,\tau I_2}\eta_{\tau I_1,I_3}\eta_{\tau^2 I_1,I_3}\eta_{I_1,I_2}=\gamma_{I_1,\tau I_2}^{I_3}\cdot 1 \cdot \gamma_{I_2,\tau^2 I_1}^{I_3} \gamma_{\tau I_1,I_1}^{I_2} = (-1)^2 =1,\\
        &\varepsilon_6 = \eta_{I_1,\tau^2 I_1} \eta_{\tau I_1,\tau^2 I_1} \eta_{\tau^2 I_1,\tau I_2} \eta_{I_1,\tau I_1} \eta_{\tau I_1,I_2} = 1 \cdot (\gamma_{\tau I_1,\tau^2 I_1}^{\tau I_2})^2 (\gamma_{I_1,\tau I_1}^{I_2})^2 = 1.
    \end{align*}
    For $b_1,\cdots,b_6 \in \mathbb{R}_{>0}$, we can calculate inductively:
    \begin{align*}
        (a_1^{(6)},\cdots,a_6^{(6)})&=(b_1,\cdots,b_6),\\
        \beta_6&=b_6>0,\\
        (a_1^{(5)},\cdots,a_6^{(5)})&=(b_1,\cdots,b_5,b_6^{-1}),\\
        \beta_5&=b_5>0,\\
        (a_1^{(4)},\cdots,a_6^{(4)})&=(b_1,\cdots,b_4,b_5^{-1},b_5b_6^{-1}),\\
        \beta_4&=b_4-b_5b_6^{-1}>0,\\
        (a_1^{(3)},\cdots,a_6^{(3)})&=(b_1,b_2,b_3,b_4^{-1},b_4b_5^{-1},\frac{b_4^{-2}b_5b_6^{-1}}{1-b_4^{-1}b_5b_6^{-1}}),\\
        \beta_3&=b_3>0,\\
        (a_1^{(2)},\cdots,a_6^{(2)})&=(b_1,b_2,b_3^{-1},b_4^{-1},b_3b_4b_5^{-1},\frac{b_4^{-2}b_5b_6^{-1}}{1-b_4^{-1}b_5b_6^{-1}}),\\
        \beta_2&=b_2-b_3b_4b_5^{-1}>0,\\
        (a_1^{(1)},\cdots,a_6^{(1)})&=(b_1,b_2^{-1},b_2b_3^{-1},b_2b_4^{-1},\frac{b_2^{-2}b_3b_4b_5^{-1}}{1-b_2^{-1}b_3b_4b_5^{-1}},\frac{b_2b_4^{-2}b_5b_6^{-1}(1-b_2^{-1}b_3b_4b_5^{-1})}{1-b_4^{-1}b_5b_6^{-1}}),\\
        \beta_1&=b_1-\frac{b_2b_6-b_3}{b_4b_6-b_5}>0,\\
        (a_1^{(0)},\cdots,a_6^{(0)})&=(b_1^{-1},b_1b_2^{-1},b_2b_3^{-1},\frac{b_1^{-2}b_2b_4^{-1}}{1-b_1^{-1}b_2b_4^{-1}},\frac{b_1b_2^{-2}b_3b_4b_5^{-1}(1-b_1^{-1}b_2b_4^{-1})}{1-b_2^{-1}b_3b_4b_5^{-1}},\\
        &\frac{b_2b_5-b_3b_4}{(b_1b_4-b_2)(b_1b_4b_6-b_1b_5-b_2b_6+b_3)}).
    \end{align*}
    The region is 
    \begin{align*}
        \Omega_Q=\{(b_1,\cdots,b_6)\in \mathbb{R}_{>0}^6\mid
        \substack{b_1b_4b_6-b_1b_5-b_2b_6+b_3>0,\\b_2b_5-b_3b_4>0,\\b_4b_6-b_5>0.}  \}.
    \end{align*}
\end{example}

Let $Q'=\sigma_1(Q)$ and let $I_1',\cdots,I_n'$ be the injective $kQ'$-modules.
The BGP reflection functor $R(S_1^+)$ maps $\tau I_1$ to $I_1'$, and maps $I_k$ to $I_k'$ for any $k\neq 1$.
By the assumption on $Q$, we have 2 is a source in $Q'$, 3 is a source in $\sigma_2(Q')$, and so on.
In particular, 1 is a source in $Q=\sigma_n \cdots \sigma_1(Q)=\sigma_n \cdots \sigma_2(Q')$.
Thus the ordering of the indecomposable $kQ'$-modules is 
\begin{align*}
      I_2',\cdots,I_n',I_1',\tau I_2',\cdots.
\end{align*}

Suppose for $\overline{n}_0$, the leftmost word in $Q$ is $s_{i_1} s_{i_2}\cdots s_{i_m}$, and then the leftmost word in $Q'$ is $s_{i_2}\cdots s_{i_m}s_{i_{m+1}}$.
For $2\leq k \leq m$, define 
\begin{align*}
    \varepsilon_k' = \eta_{S_{i_{2}},\omega_{S_{i_{2}}}\cdots \omega_{S_{i_{k}}}(S_{i_{k+1}})}  \cdots \eta_{S_{i_{k}},\omega_{S_{i_{k}}}(S_{i_{k+1}})},
\end{align*}
and let $\varepsilon_1'=1$.
Then for $2\leq k \leq m$, we have $\varepsilon_k=\eta_{S_{i_1},\omega_{S_{i_1}}\cdots\omega_{S_{i_{k-1}}}(S_{i_k})}\varepsilon_{k-1}'$.

For any $b_1,\cdots,b_{m+1}\in \mathbb{R}_{>0}$, let 
\begin{align*}
    (a_1^{(m)},\cdots,a_m^{(m)})=(\varepsilon_1b_1,\cdots,\varepsilon_mb_m), \quad ({a_1'}^{(m)},\cdots,{a_m'}^{(m)})=(\varepsilon_1'b_2,\cdots,\varepsilon_m'b_{m+1}).
\end{align*}
For $1\leq k,l\leq m$, let $\beta_k=\beta_k(b_1,\cdots,b_m),a_l^{(k)}=a_l^{(k)}(b_1,\cdots,b_m)$ as defined previously.
Similarly, we have $\varphi_k'$, $\beta_k'=\beta_k'(b_2,\cdots,b_{m+1})$ and ${a_l'}^{(k)}={a_l'}^{(k)}(b_2,\cdots,b_{m+1})$, for the reduced expression $(i_2,\cdots,i_{m+1})$.
Denote $j=i_{m+1}$.

\begin{lemma}
    For any $1\leq k \leq m$, assume that $\beta_{k+1},\cdots,\beta_m,\beta_k',\cdots,\beta_m'>0$.
    Then for any $k+1 \leq l < m+1$, we have ${a_{l-1}'}^{(k-1)}=a_l^{(k)}$.
\end{lemma}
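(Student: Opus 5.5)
The plan is a downward induction on $k$, proving the statement for all admissible $l$ at once. For $1\leq k\leq m$ let $P(k)$ be the assertion that ${a_{l-1}'}^{(k-1)}=a_l^{(k)}$ for every $k+1\leq l\leq m$. The hypotheses $\beta_{k+1},\cdots,\beta_m,\beta_k',\cdots,\beta_m'>0$ are exactly what make all functions $a^{(k)}_\bullet$ and ${a'}^{(k-1)}_\bullet$ defined through the inverse maps $\varphi^{-1}$ of Lemma \ref{varphikbij}. They also ensure that every base raised to a power in those inverse formulas is positive. So every expression below makes sense.

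The guiding observation is that, by Lemma \ref{determine}, $a_l^{(k)}$ with $l>k$ depends only on $b_{k+1},\cdots,b_l$ and on the subword $(i_{k+1},\cdots,i_l)$. Likewise ${a'}_{l-1}^{(k-1)}$ depends only on the same variables $b_{k+1},\cdots,b_l$ (shifted by one) and on the same subword, since $i'_{s-1}=i_s$. The only place where the two reduced expressions really differ is the Tits signs: $\varepsilon_{k}'$ differs from $\varepsilon_{k+1}$ by a factor $\eta$. However, these signs never enter once a coordinate has been inverted. Indeed, $a_l^{(l-1)}=b_l^{-1}$ and ${a'}_{l-1}^{(l-2)}=b_l^{-1}$. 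Moreover, in the inverse formulas of $\varphi_k$ the sign enters only through the combination $\varepsilon_{k+1}a_{k+1}^{(k+1)}=b_{k+1}$. On the primed side the corresponding quantity is $\varepsilon_k'{a'}_k^{(k)}=b_{k+1}$, because $a_k^{(k)}=\varepsilon_k b_k$ throughout.

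The steps, in order, are as follows.

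First, the case $l=k+1$. Here $a_{k+1}^{(k)}=b_{k+1}^{-1}={a'}_{k}^{(k-1)}$ directly from the definitions; this also covers $P(m)$.

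Second, the case $l\geq k+2$, assuming $P(k+1)$. I write $a_l^{(k)}$ via formula (3) or (4) in the proof of Lemma \ref{varphikbij}, according to whether $i_l\neq i_{k+1}$ or $i_l=i_{k+1}$. The ingredients are $a_l^{(k+1)}$, $b_{k+1}$, and the terms $a_s^{(k+1)}$ for $k+1<s\leq l$ with $i_s=i_{k+1}$. Similarly, ${a'}_{l-1}^{(k-1)}$ is built from ${a'}_{l-1}^{(k)}$, $b_{k+1}$, and the terms ${a'}_{s-1}^{(k)}$ with $i'_{s-1}=i_s=i_{k+1}$. The two formulas use the same exponent $A_{S_{i_{k+1}},S_{i_l}}$ and the same summation ranges after the shift $s\mapsto s-1$. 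By $P(k+1)$ we have ${a'}_{s-1}^{(k)}=a_s^{(k+1)}$ for all $k+2\leq s\leq l$. Hence the two expressions agree term by term, which gives $P(k)$.

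The main obstacle is purely bookkeeping: matching the index ranges $k+1<s<l$ (respectively $\leq l$) on the unprimed side with $k<s'<l-1$ on the primed side, and checking that no Tits sign survives. I would handle this by first rewriting each inverse formula with $\varepsilon_{k+1}a_{k+1}^{(k+1)}$ replaced by $b_{k+1}$, so that the signs visibly disappear. After that, the induction step reduces to the substitution described in the second step.
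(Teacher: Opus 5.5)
Your proof is correct and is essentially the paper's argument. The paper only remarks that, once the hypotheses make everything well-defined, the identity follows immediately from the definitions. Your downward induction on $k$ through the inverse formulas of $\varphi_k^{-1}$ and $(\varphi'_{k-1})^{-1}$ is exactly the unpacking of that remark: it uses $\varepsilon_{k+1}a_{k+1}^{(k+1)}=\varepsilon_k'{a'}_k^{(k)}=b_{k+1}$ and the index shift $i'_{s-1}=i_s$. One cosmetic point: $P(m)$ is vacuous, since the range $m+1\leq l\leq m$ is empty, and it is your $l=k+1$ computation that establishes $P(m-1)$.
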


\begin{proof}
    The assumption guarantees that ${a_{l-1}'}^{(k-1)}$ and $a_l^{(k)}$ are well-defined. Then the Lemma follows immediately from the definitions.
\end{proof}

\begin{corollary}
    For any $2\leq k \leq m$, assume that $\beta_{k+1},\cdots,\beta_m,\beta_k',\cdots,\beta_m'>0$.
    If $i_k\neq j$, then $\beta_{k-1}'=\beta_k$.
\end{corollary}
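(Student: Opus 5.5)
Unwinding the definitions of $\beta_{k-1}'$ and $\beta_k$ and then applying the preceding Lemma term by term should give the corollary. The primed expression is $(i_2,\cdots,i_{m+1})$, so its $(k-1)$-th letter is $i_k$. By definition,
\begin{align*}
    \beta_k = \varepsilon_k a_k^{(k)} - \sum_{\substack{k+1<s\leq m\\ i_s=i_k}} a_s^{(k)}, \qquad
    \beta_{k-1}' = \varepsilon_{k-1}' {a_{k-1}'}^{(k-1)} - \sum_{\substack{k<s'\leq m\\ i_{s'+1}=i_k}} {a_{s'}'}^{(k-1)}.
\end{align*}
Here $a_k^{(k)}=\varepsilon_k b_k$ (noted in the proof of Lemma \ref{determine}), so the leading term of $\beta_k$ is $b_k$. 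Likewise ${a_{k-1}'}^{(k-1)}=\varepsilon_{k-1}' b_k$, because the $(k-1)$-th primed variable is $b_k$. Hence both leading terms equal $b_k$, and no relation between $\varepsilon_k$ and $\varepsilon_{k-1}'$ is needed.

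For the sums, I reindex the primed one by $l=s'+1$. It becomes $\sum_{k+1<l\leq m+1,\ i_l=i_k} {a_{l-1}'}^{(k-1)}$. The index $l=m+1$ appears only if $i_{m+1}=i_k$, that is, only if $j=i_k$. The hypothesis $i_k\neq j$ excludes this, so the range is exactly $k+1<l\leq m$. For each such $l$ we have $k+1\leq l<m+1$, and the hypotheses $\beta_{k+1},\cdots,\beta_m,\beta_k',\cdots,\beta_m'>0$ are the same as in the preceding Lemma. That Lemma gives ${a_{l-1}'}^{(k-1)}=a_l^{(k)}$, so the two sums agree term by term and $\beta_{k-1}'=\beta_k$.

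The only delicate point is the bookkeeping. I need to check that the primed index conditions translate exactly, i.e. that the letter at primed position $s'$ is $i_{s'+1}$. I also need to confirm that every quantity involved is defined under the stated positivity assumptions; $\beta_{k-1}'$ itself only requires the functions ${a'}^{(k-1)}$, which exist because $\beta_k',\dots,\beta_m'>0$. Everything else is immediate from the preceding Lemma.
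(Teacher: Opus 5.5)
Your proof is correct and follows the paper's argument essentially verbatim. Both leading terms reduce to $b_k$; the reindexing $t=s+1$ turns the primed sum into a sum over $k+1<t\leq m+1$; the term $t=m+1$ is excluded by $i_k\neq j=i_{m+1}$; and the preceding Lemma identifies ${a'}_{t-1}^{(k-1)}$ with $a_t^{(k)}$ for the remaining terms. Your remarks on well-definedness under the positivity hypotheses are accurate and spell out what the paper leaves implicit.
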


\begin{proof}
    We have 
    \begin{align*}
        \beta_{k-1}'
        = b_k-\sum_{\substack{k<s\leq m \\ i_{s+1}=i_k}} {a_s'}^{(k-1)}
        = b_k-\sum_{\substack{k+1< t \leq m+1 \\ i_t=i_k}} {a_{t-1}'}^{(k-1)}
        = b_k-\sum_{\substack{k+1< t \leq m \\ i_t=i_k}} a_{t}^{(k)} = \beta_k,
    \end{align*}
    where the third equality follows from $i_k\neq j=i_{m+1}$ and the previous Lemma.
\end{proof}

\begin{proposition}\label{Omega}
    There is a homeomorphism between the regions $\Omega_Q$ and $\Omega_{Q'}$.
\end{proposition}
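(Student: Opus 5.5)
The plan is to write down an explicit map $\Omega_Q \to \Omega_{Q'}$ built from the maps $\varphi_k$, and then check with the preceding Corollary that it carries the defining inequalities of one region onto those of the other. The natural guess, read off from the matching ${a_{l-1}'}^{(k-1)}=a_l^{(k)}$, is this. Start with $(b_1,\dots,b_m)\in\Omega_Q$. By Lemma \ref{varphikbij}, the inequalities $\beta_k>0$ say exactly that the successive inverses $\varphi_{m-1}^{-1},\dots,\varphi_0^{-1}$ are defined. So $(b_1,\dots,b_m)$ corresponds to a point $(a_1^{(0)},\dots,a_m^{(0)})\in\mathbb{R}_{>0}^m$ through the homeomorphism $\varphi_{m-1}\circ\cdots\circ\varphi_0$, which maps $\mathbb{R}^m_{>0}$ onto the sign-twisted image of $\Omega_Q$. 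In other words, $\Omega_Q$ is homeomorphic to $\mathbb{R}^m_{>0}$, and likewise $\Omega_{Q'}$ is homeomorphic to $\mathbb{R}^m_{>0}$. This already gives a homeomorphism $\Omega_Q\cong\mathbb{R}^m_{>0}\cong\Omega_{Q'}$. I would first verify carefully that the composite really is a bijection onto $\Omega_Q$ (after multiplying coordinates by $\varepsilon_k$), by induction downward on $k$ using Lemma \ref{varphikbij} and Lemma \ref{determine}.

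The weak point is that the image of $\varphi_k$ is cut out by the condition on $a^{(k+1)}$-coordinates, not on $b$'s. So I have to check that the set of $b$ with $\beta_{k+1},\dots,\beta_m>0$ maps bijectively onto the domain of $\varphi_k^{-1}$ at each stage. This follows by definition, since $\beta_{k+1}$ is precisely the defining inequality of the image of $\varphi_k$, evaluated at $a^{(k+1)}(b)$. Continuity of both directions comes from the rational formulas.

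If the paper intends a more meaningful homeomorphism compatible with the shift $b_k\mapsto b_{k-1}'$, I would instead try the map $(b_1,\dots,b_m)\mapsto(b_2,\dots,b_m,b_{m+1})$, with $b_{m+1}$ determined by $b_1$. For the indices $k$ with $i_k\neq j$, the preceding Corollary gives $\beta_{k-1}'=\beta_k$, so those inequalities match. The remaining indices with $i_k=j$ are the main obstacle. There the sums pick up the extra term ${a'_m}^{(k-1)}$ coming from $b_{m+1}$, and one needs to choose $b_{m+1}$ as a function of $b_1$ (via the reflection $\tau I_1\mapsto I_1'$) so that the leftover inequalities $\beta_1>0$ and $\beta_m'>0$ correspond. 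Because this is delicate, I would fall back on the first route through $\mathbb{R}^m_{>0}$, which needs nothing beyond Lemma \ref{varphikbij}.
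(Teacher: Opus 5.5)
Your first route is correct, and it differs from the paper's. By Lemma \ref{varphikbij}, the conditions $\beta_m,\dots,\beta_1>0$ are exactly the successive conditions for $\varphi_{m-1}^{-1},\dots,\varphi_0^{-1}$ to be applicable. So $b\mapsto(\varepsilon_1b_1,\dots,\varepsilon_mb_m)$ identifies $\Omega_Q$ with $\varphi_{m-1}\circ\cdots\circ\varphi_0(\mathbb{R}^m_{>0})$, a homeomorphic image of $\mathbb{R}^m_{>0}$. The same argument applies verbatim to the word $(i_2,\dots,i_{m+1})$ with its signs $\varepsilon'_k$, because Lemma \ref{varphikbij} holds for any reduced expression and never uses the leftmost-word property. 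Composing gives $\Omega_Q\cong\mathbb{R}^m_{>0}\cong\Omega_{Q'}$.

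The paper instead works directly with the shift $b_k\mapsto b'_{k-1}$, which is essentially your second route. It uses the Corollary preceding the proposition, $\beta'_{k-1}=\beta_k$ for $i_k\neq j$. At the positions with $i_{k_s}=j$ it computes ${a'_m}^{(l)}$ inductively to get $\beta'_{k_s-1}=\beta_{k_s}-{a'_m}^{(k_s-1)}$. It then keeps $b_2,\dots,b_m$ and trades $b_1$, constrained only by $\beta_1>0$, for $b_{m+1}$, constrained by $\beta'_m=b_{m+1}>0$ and the modified inequalities. The explicit map and its continuity are only sketched there.

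Your argument is shorter and fully rigorous from a lemma already proved. It also shows more: every $\Omega_{i_1,\dots,i_m}$, for any reduced expression, is homeomorphic to $\mathbb{R}^m_{>0}$. The paper itself relies on this bijection later, in Theorem \ref{u=u} and in the description of $\mathbf{U}^-(\overline{n})$.

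The price is that your homeomorphism ignores the shift structure. It does not fix $b_2,\dots,b_m$ or match the inequalities $\beta_k=\beta'_{k-1}$, so it says nothing about how the two regions are related through $R(S_1^+)$. It also makes plain that the proposition, as stated, follows formally from Lemma \ref{varphikbij} rather than from anything specific to $Q$ and $Q'$.
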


\begin{proof}
    Recall that $\Omega_{Q}=\Omega_{i_1,\cdots,i_m}$, $\Omega_{Q'}=\Omega_{i_2,\cdots,i_{m+1}}$.
    It only remains to consider those $k$ such that $i_k=j$.

    If there doesn't exist any $2\leq k\leq m$ such that $i_k=j$, then we must have $j=i_1$, since $1,\cdots,n$ all appear in a reduced expression of $\overline{n}_0$.
    In this case, $\beta_{k-1}'=\beta_k$ for all $2\leq k\leq m$, $\beta_1=b_1$, $\beta_m'=b_{m+1}$.
    There is obviously a bijection between $\Omega_{i_1,\cdots,i_m}$ and $\Omega_{i_2,\cdots,i_{m+1}}$.

    Now assume $2\leq k_t < \cdots <k_1 \leq m$ are all the numbers such that $j=i_{k_1}=\cdots=i_{k_t}$.
    Let $\beta_s>0$ or $\beta_s'>0$ inductively to assure $ {a_m'}^{(l)}$ is well-defined.
    For $k_1-1\leq l<m$, we have 
    \begin{align*}
        {a_m'}^{(l)}=b_{m+1}^{-1}\beta_m^{-A_{S_{i_m},S_j}}\cdots \beta_{l+2}^{-A_{S_{i_{l+2}},S_j}}.
    \end{align*}
    Then
    \begin{align*}
    \beta_{k_1-1}'=b_{k_1}-{a_m'}^{(k_1-1)}=b_{k_1}-b_{m+1}^{-1}\beta_m^{-A_{S_{i_m},S_j}}\cdots \beta_{k_1+1}^{-A_{S_{i_{k_1+1}},S_j}},
    \end{align*}
    while $\beta_{k_1}=b_{k_1}$.
    For $l=k_1-2$, we have 
    \begin{align*}
        {a_m'}^{(k_1-2)}={a_m'}^{(k_1-1)} b_{k_1}^{-1}(b_{k_1}-{a_m'}^{(k_1-1)})^{-1}.
    \end{align*}
    For $k_2-1\leq l< k_1-2$, we have 
    \begin{align*}
        {a_m'}^{(l)}={a_m'}^{(k_1-2)}\beta_{k_1-1}^{-A_{S_{i_{k_1-1}},S_j}}\cdots \beta_{l+2}^{-A_{S_{i_{l+2}},S_j}},
    \end{align*}
    and so on. 
    We can calculate $ {a_m'}^{(l)}$ inductively, and have
    \begin{align*}
        \beta_{k_s-1}'=\beta_{k_s}- {a_m'}^{(k_s-1)},
    \end{align*}
    for $1\leq s \leq t$.
    Again $\beta_m'=b_{m+1}$, while $\beta_1=b_1-\sum_{\substack{2< s \leq m \\ i_s=i_1}} a_{s}^{(1)}$.
    Thus there is a bijection between the regions $\Omega_{i_1,\cdots,i_m}$ and $\Omega_{i_2,\cdots,i_{m+1}}$.
    One can easily see that the bijection is continuous, and so is its inverse.
    The proposition is proved.
\end{proof}

For any two complete sections, and consequently two complete hereditary subcategories of $\mathcal{R}$, there is a sequence of BGP reflection functors maps one to another. 
Repeatedly apply Proposition \ref{Omega}, we get a bijection between the two corresponding regions.
Thus we may regard these regions $\Omega_Q$ as obtained from $\mathcal{R}$, and call them fundamental regions from the root category.

\begin{remark}
    Although the definition of a region $\Omega_{i_1,\cdots,i_m}$ is valid for any reduced expression $(i_1,\cdots,i_m)$ of $\overline{n}_0$, we will only use part of them, in order to have a better connection with quiver representations. 
    (There indeed exist reduced expressions for $\overline{n}_0$ that are not leftmost words in all orientations of  quivers. For example, type $A_3$ and reduced expression $(1,2,3,2,1,2)$). 
    But the results presented later in this paper are applicable to any reduced expression.
\end{remark}

\subsection{Description of $\mathbf{U}^+_{>0}$, $\mathbf{U}^-_{>0}$ and $\mathbf{G}_{>0}$}

In this subsection, we will use the fundamental regions of $\mathcal{R}$ to characterize the size of the monoids of  totally positive elements in $\mathbf{G}=\mathbf{G}(\mathcal{R},\mathbb{R})$.

We fix a complete section of $\mathcal{R}$, and then get a complete hereditary subcategory $\mathcal{B}$ of $\mathcal{R}$ and a quiver $Q$. 
The set of vertices $Q_0=\{1,\cdots,n\}$, and we assume that $i$ is a source in $\sigma_{i-1}\cdots\sigma_1(Q)$ for $i=1,\cdots,n$.
Consider the ordering 
\begin{align*}
  \{  I_1,\cdots,I_n,\tau I_1,\cdots ,\tau I_n, \tau^2 I_1, \cdots \}\bigcap \operatorname{ind}\mathcal{B}
\end{align*}
of indecomposable $\mathbb{C}Q$-modules.
Assume that there are a total of $m$ indecomposable modules in the ordering, and denote the indecomposable module at the $k$-th position in the ordering as $\tau^{j_k}I_{i_k}$, for $1\leq k\leq m$.
Note that $(i_1,\cdots,i_m)$ is the leftmost word for $\overline{n}_0$ in $Q$.

\begin{lemma}
    For any $1\leq k\leq m$, $i_k$ is a source in $\sigma_{i_{k-1}}\cdots\sigma_{i_1}(Q)$.
\end{lemma}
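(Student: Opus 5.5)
The plan is to show that the sequence $(i_1,\dots,i_m)$ is obtained by repeating the cyclic admissible sequence $(1,2,\dots,n)$ and truncating. Along the way we track which vertices drop out as their $\tau$-orbits are exhausted in $\operatorname{ind}\mathcal{B}$. The key observation is that whenever $i$ is a source of a quiver $Q''$, applying $\sigma_i$ only reverses arrows at $i$. So for $j\neq i$, whether $j$ is a source or sink can change only through arrows between $i$ and $j$. We argue by induction on $k$.

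First treat the untruncated sequence. Write $c=\sigma_n\cdots\sigma_1$ and recall $c(Q)=Q$. By the standing assumption, $i$ is a source in $\sigma_{i-1}\cdots\sigma_1(Q)$, so for the infinite periodic word $1,\dots,n,1,\dots,n,\dots$, each letter is a source in the quiver obtained by applying the preceding reflections. Indeed, after $r$ full rounds we are back at $Q$ because $c(Q)=Q$, and the pattern repeats.

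Now pass to $(i_1,\dots,i_m)$. This is the subsequence of the periodic word obtained by deleting the occurrences $\tau^jI_i$ that are not in $\operatorname{ind}\mathcal{B}$. Since $\mathcal{B}$ comes from a complete section, each $\tau$-orbit meets $\operatorname{ind}\mathcal{B}$ in an interval $\{I_i,\tau I_i,\dots,\tau^{r_i}I_i\}$. Thus vertex $i$ occurs in the word exactly in rounds $0,\dots,r_i$ and is deleted from all later rounds.

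Fix $k$ with $i_k=i$ in round $r\le r_i$. The quiver $\sigma_{i_{k-1}}\cdots\sigma_{i_1}(Q)$ differs from the untruncated one, in which $i$ is a source, only by the omitted reflections $\sigma_j$ at vertices $j$ with $r_j<r$. It therefore suffices to show that every arrow at $i$ has the same orientation in both quivers. An omitted $\sigma_j$ affects $i$ only if $j$ is adjacent to $i$. For adjacent $i,j$, the standard AR-theory fact is that $|r_i-r_j|\le 1$, and more precisely that the relative position of $\tau^{r}I_i$ and $\tau^{r'}I_j$ inside the section is governed by the arrows of $Q$. Using this, we check that the number of omitted $\sigma_j$ before position $k$ is even, or zero, so the arrow between $i$ and $j$ is unchanged.

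The main obstacle is this parity check, i.e.\ showing that the deletions never flip an arrow at $i_k$. A cleaner alternative, which I would try first, uses the criterion from \cite{ORT}. A word is source-adapted exactly when it lists indecomposables compatibly with the AR-order (the leftmost word property). Here $\tau^{j_k}I_{i_k}$ is the predecessor-closed next element of the ordering. Under the reflected quiver $\sigma_{i_{k-1}}\cdots\sigma_{i_1}(Q)$, obtained by applying the BGP functors $R(S^+)$ as in the discussion preceding Proposition \ref{Omega}, the module $\tau^{j_k}I_{i_k}$ becomes an injective simple module. Hence $i_k$ is a source of that quiver (right modules, so injective simple corresponds to a source). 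This is the same argument that showed the ordering for $Q'$ is $I_2',\dots,I_n',I_1',\tau I_2',\dots$, iterated $k-1$ times.
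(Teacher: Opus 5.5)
\paragraph{A gap at the decisive step.} Your reduction is correct. Each $\sigma_j$ only reverses the arrows at $j$, and no occurrence of $i=i_k$ before position $k$ is ever deleted. So $i$ is a source in $\sigma_{i_{k-1}}\cdots\sigma_{i_1}(Q)$ exactly when, for every neighbour $j$ of $i$, an even number of $\sigma_j$'s has been deleted before position $k$. The bound $|r_i-r_j|\leq 1$ for adjacent $i,j$ is also true. But the argument stops at the step that carries all the content, and $|r_i-r_j|\leq 1$ is not enough for it. If $i_k=i$ lies in round $r\leq r_i$ and a neighbour $j$ has $r_j<r$, then necessarily $r_j=r_i-1$ and $r=r_i$. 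The number of deleted $\sigma_j$'s before position $k$ is then exactly $1$ if $j<i$ and $0$ if $j>i$; it is never an even number $\geq 2$.

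So the lemma is equivalent to an oriented statement you never formulate: for every arrow $a\to b$ of $Q$ (equivalently $a<b$ in the admissible order), $r_a-r_b\in\{0,1\}$. Your phrase ``governed by the arrows of $Q$'' is exactly where this has to be proved. One way is a height function $h$ on the AR quiver, increasing by $1$ along arrows and decreasing by $2$ under $\tau$. For $a\to b$ the injective slice has the arrow $I_b\to I_a$, so $h(I_a)-h(I_b)=1$. The projectives form a section and the graph is a tree, so $\tau^{r_a}I_a$ and $\tau^{r_b}I_b$ (adjacent $\tau$-orbits) are joined by an arrow. Hence $1-2(r_a-r_b)=h(\tau^{r_a}I_a)-h(\tau^{r_b}I_b)=\pm 1$.

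The paper avoids this global count and argues round by round. A vertex $i$ that drops out after round $r$ has $\tau^{r}I_i$ projective. After the reflections of the earlier rounds this module becomes the injective at $i$, hence is projective-injective, so $i$ is a sink of the current quiver. All its neighbours therefore precede it in that round, and its omission is harmless. One then restarts from the reflected quiver $Q_1$, in which the omitted vertices have become sources.

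\paragraph{The alternative route.} Your ``cleaner alternative'' does not close the gap. In the paper's conventions the injective at a vertex is simple exactly when that vertex is a source. So saying that $\tau^{j_k}I_{i_k}$ ``becomes an injective simple module'' just restates the lemma. In the paper this simplicity is Lemma \ref{tauIS}, which is deduced \emph{from} the present lemma. To make this route independent you would have to prove three things:
\begin{itemize}
\item every AR-successor of $\tau^{j_k}I_{i_k}$ in $\operatorname{ind}\mathcal{B}$ occurs earlier in the list;
\item the reflections $R(S_{i_1}^+),\dots,R(S_{i_{k-1}}^+)$ remove exactly those modules, preserve the AR structure on the remaining ones, and create no new successors;
\item a sink of the AR quiver is a simple injective.
\end{itemize}
None of this is in the proposal. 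Moreover, the discussion of $Q'$ before Proposition \ref{Omega} uses only the untruncated cyclic order. Iterating it does not account for the deletions, and the deletions are precisely the difficulty.
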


\begin{proof}
    Note that $j\geq 1$ is minimal such that $\tau^j I_i\notin \operatorname{ind}\mathcal{B}$ if and only if $\tau^{j-1}I_i$ is projective as $\mathbb{C}Q$-module.
    For $1\leq k\leq n$, $i_k$ is a source in $\sigma_{i_{k-1}}\cdots\sigma_{i_1}(Q)$ by the assumption on $Q$.

    If $\tau I_i\notin \operatorname{ind}\mathcal{B}$ for some $i$ (and will be dropped from the ordering), then $I_i$ is both projective and injective, which implies $i$ is a sink in $Q$.
    Then there is no arrow between $i$ and any of the vertices $i+1,\cdots,n$.
    Thus $l$ is a source in $\sigma_{l-1}\cdots\sigma_{i+1}\sigma_{i-1}\cdots\sigma_1(Q)=\sigma_{l-1}\cdots\sigma_{i+1}\sigma_{i-1}\cdots\sigma_1\sigma_{n}\cdots\sigma_1(Q)$, for any $i+1\leq l\leq n$.
    This shows that, for all $k$ such that $j_k=1$, we have $i_k$ is a source in $\sigma_{i_{k-1}}\cdots\sigma_{i_1}(Q)$.

    Assume that $\tau I_{p_1},\cdots,\tau I_{p_s}$ are all terms in $\{\tau I_1,\cdots,\tau I_n\}$ that are not in $\operatorname{ind}\mathcal{B}$, with $1\leq p_1<\cdots<p_s\leq n$.
    If $\tau^2 I_j\notin \operatorname{ind}\mathcal{B}$ for some $j$, then $\tau I_j$ is projective.
    Consider the quiver $Q_1=\sigma_n \cdots \hat{\sigma}_{p_s}\cdots \hat{\sigma}_{p_1}\cdots \sigma_1(Q)$, which means $\sigma_{p_1},\cdots,\sigma_{p_s}$ are omitted from $\sigma_n\cdots\sigma_1$.
    Then in $Q_1$, the vertices $p_1,\cdots,p_s$ become sources.
    The composition of BGP reflection functors
    \begin{align*}
        R(S_n^+)\cdots \hat{R}(S_{p_s}^+)\cdots \hat{R}(S_{p_1}^+)\cdots R(S_1^+):\operatorname{mod}\mathbb{C}Q\rightarrow \operatorname{mod}\mathbb{C}Q_1,
    \end{align*}
    which omit functors $\hat{R}(S_{p_1}^+),\cdots,\hat{R}(S_{p_s}^+)$, maps $\tau I_j$ to the injective module $I_j'$ corresponding to vertex $j$ in $\operatorname{mod}\mathbb{C}Q_1$.
    Since $\tau I_j$ is projective, we have $I_j'$ is both injective and projective, which means $j$ is a sink in $Q_1$.
    Similar to the preceding argument, for $j+1\leq l\leq n$, $l$ is a source in $\sigma_{l-1}\cdots \hat{\sigma}_j \cdots \hat{\sigma}_{p_t} \cdots \sigma_1(Q_1)$, which omit $\sigma_j$ and all the $\sigma_{p_t}$ such that $p_t<l$. 
    Since $Q_1=\sigma_n \cdots \hat{\sigma}_{p_s}\cdots \hat{\sigma}_{p_1}\cdots \sigma_1\sigma_n\cdots\sigma_1(Q)$, we can deduce that for all $k$ such that $j_k=2$, $i_k$ is a source in $\sigma_{i_{k-1}}\cdots\sigma_{i_1}(Q)$.

    By similar reasoning, we can prove that for all $1\leq k\leq m$, $i_k$ is a source in $\sigma_{i_{k-1}}\cdots\sigma_{i_1}(Q)$.
\end{proof}

\begin{lemma}\label{tauIS}
    For any $2\leq k\leq m$, the composition of BGP reflection functors $R(S_{i_{k-1}}^+)\cdots R(S_{i_1}^+)$ maps $\tau^{j_k}I_{i_k}$ to $S_{i_k}$.
\end{lemma}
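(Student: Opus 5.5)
We argue by induction on the position $k$ in the ordering. The engine is the behaviour of a single BGP reflection at a source. If $i$ is a source of a quiver $Q''$, then $R(S_i^+)$ sends the simple projective $S_i$ to $0$. On every other indecomposable $M$ it acts as the identification $M\mapsto \tau^{-1}$-type shift that matches the AR-quivers of $\operatorname{mod}\mathbb{C}Q''$ and $\operatorname{mod}\mathbb{C}\sigma_i(Q'')$. Concretely, it sends injectives $I_l$ ($l\neq i$) to the injectives $I_l'$ of $\sigma_i(Q'')$, sends $\tau I_i$ to $I_i'$, and commutes with $\tau$ on non-projective objects. This is the statement already used in the discussion before Proposition~\ref{Omega}, applied now to an arbitrary source. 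By the preceding Lemma, $i_k$ is a source of $Q^{(k-1)}:=\sigma_{i_{k-1}}\cdots\sigma_{i_1}(Q)$, so each reflection in the composite $R(S_{i_{k-1}}^+)\cdots R(S_{i_1}^+)$ is well defined.

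The strengthened claim we prove is this: for each $k$, the composite $\Phi_{k-1}=R(S_{i_{k-1}}^+)\cdots R(S_{i_1}^+)$ sends the tail $\tau^{j_k}I_{i_k},\tau^{j_{k+1}}I_{i_{k+1}},\dots$ of the ordering to the analogous ordering for $Q^{(k-1)}$. In other words, it maps $\tau^{j_l}I_{i_l}$ (for $l\geq k$) to $\tau^{j_l-c_l}I'_{i_l}$, where $c_l$ is the number of occurrences of $i_l$ among $i_1,\dots,i_{k-1}$ and the $I'$ are the injectives of $Q^{(k-1)}$. In particular, the first object of that ordering is sent to an injective $I'_{i_k}$ of $Q^{(k-1)}$. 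The inductive step reflects at the source $i_k$ and uses the rules above: $\tau^{j}I_{i_k}\mapsto\tau^{j-1}I'_{i_k}$ for $j\geq1$, and $\tau^{j}I_l\mapsto\tau^{j}I'_l$ for $l\ne i_k$. These show that the tail shifts exactly as claimed. The ordering for $Q^{(k)}$ is the cyclic rotation of the ordering for $Q^{(k-1)}$, and this is consistent with how $Q'=\sigma_1(Q)$ reorders its injectives as $I_2',\dots,I_n',I_1',\tau I_2',\dots$ in the text.

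Once the strengthened claim is in place, the lemma follows. $\Phi_{k-1}(\tau^{j_k}I_{i_k})$ is the first object in the ordering of $\operatorname{ind}$ for $Q^{(k-1)}$, and that object is exactly the injective at $i_k$. Since $i_k$ is a source of $Q^{(k-1)}$, the injective at $i_k$ is the simple $S_{i_k}$. Finally, we check that the objects dropped from the ordering (those $\tau^jI_i\notin\operatorname{ind}\mathcal{B}$) do not disturb the count. Such an object corresponds to $\tau^{j-1}I_i$ being projective, and then $\tau^{j-1}I_i$ gets killed (sent to $0$, i.e. to $T\mathcal{B}$) precisely when the reflection at $i$ is applied. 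This matches the omitted reflections in the proof of the preceding Lemma.

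The main obstacle will be this bookkeeping. We must show that at each step the projective being killed is exactly the one whose $\tau$-translate is missing from the ordering, so that the rotated ordering for $Q^{(k-1)}$ really begins with $I'_{i_k}$ and not with some other vertex. I would handle it by reusing the case analysis of the previous Lemma: first $j_k=1$, then $j_k=2$ via the quiver $Q_1$, and then an induction on $j_k$. Throughout, I would identify the ordering with the unique ordering compatible with the source sequence of Lemma (preceding).
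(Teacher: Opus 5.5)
This is essentially the paper's argument: you push $\tau^{j_k}I_{i_k}$ through the reflections using $\tau^jI_i\mapsto\tau^{j-1}I_i'$ and $\tau^jI_l\mapsto\tau^jI_l'$ ($l\neq i$), so it lands on $\tau^{j_k-c_k}$ of the injective at $i_k$ in $\sigma_{i_{k-1}}\cdots\sigma_{i_1}(Q)$, and that injective is simple because $i_k$ is a source there. The ``bookkeeping obstacle'' you flag is not real, since $c_k=j_k$ follows directly from the definition of the ordering (the copies of $i_k$ before position $k$ are exactly $I_{i_k},\tau I_{i_k},\dots,\tau^{j_k-1}I_{i_k}$), so you need neither rotated orderings of $Q^{(k-1)}$ (which are not literally the paper's orderings anyway) nor the case analysis; also note that in this paper's convention the simple killed at a source is the simple injective $S_i=I_i$, not a simple projective, and this is what makes both $\tau I_i\mapsto I_i'$ and your final step consistent.
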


\begin{proof}
    By the previous Lemma, the composition of BGP reflection functors
    \begin{align*}
        R(S_{i_{k-1}}^+)\cdots R(S_{i_1}^+)
    \end{align*}
     is well-defined.
    Note that for any source $i$ in some quiver $Q'$, the BGP reflection functor $R(S_i^+)$ maps $\tau I_i$ in $\operatorname{mod}\mathbb{C}Q'$ to the injective module $I_i'$ in $\operatorname{mod}\mathbb{C}\sigma_i(Q')$, and maps $I_j$ in $\operatorname{mod}\mathbb{C}Q'$ to the injective module $I_j'$ in $\operatorname{mod}\mathbb{C}\sigma_i(Q')$, for any $j\neq i$.
    Thus we have 
    \begin{align*}
        j_k=\#\{ 1\leq p\leq k-1 \mid i_p=i_k \},
    \end{align*}
    where $\#$ denote the number of elements in a set.

    From this fact, it follows that the functor $R(S_{i_{k-1}}^+)\cdots R(S_{i_1}^+)$ maps $\tau^{j_k}I_{i_k}$ to the injective module corresponding to $i_k$ of the quiver $\sigma_{i_{k-1}}\cdots\sigma_{i_1}(Q)$.
    Since $i_k$ is a source in $\sigma_{i_{k-1}}\cdots\sigma_{i_1}(Q)$, the injective module corresponding to $i_k$ is simple, that is, $S_{i_k}$.
\end{proof}

By Lemma \ref{tauIS}, we have for $1\leq k\leq m$
\begin{align*}
    n_{[S_{i_{k-1}}]}\cdots n_{[S_{i_1}]}u_{\tau^{j_k}I_{i_k}}=\varepsilon_k u_{S_{i_k}},
\end{align*}
where $\varepsilon_1,\cdots,\varepsilon_m$ are exactly the Tits signs for $(i_1,\cdots,i_m)$.

\begin{definition}
    Let $\mathfrak{U}^+_{>0}$ be a subset of $\mathbf{U}^+$ consisting of the following elements:
    \begin{align*}
        \mathfrak{U}^+_{>0}=\{ E_{[\tau^{j_1}I_{i_1}]}(\varepsilon_1b_1)\cdots  E_{[\tau^{j_m}I_{i_m}]}(\varepsilon_mb_m) \mid (b_1,\cdots,b_m)\in \Omega_Q \}.
    \end{align*}
\end{definition}

We will compare $\mathfrak{U}^+_{>0}$ and the submonoid $\mathbf{U}^+_{>0}$ of $\mathbf{G}=\mathbf{G}(\mathcal{R},\mathbb{R})$.

Let $\mathfrak{B}$ be the set of all Borel subgroups of $\mathbf{G}$.
For any $x\in \mathbf{G}$ and any Borel subgroup $\mathbf{B}\in \mathfrak{B}$, denote $x\mathbf{B}x^{-1}$ by $x\cdot \mathbf{B}$.

For $0\leq k\leq m$, define a map $f_k:(\mathbb{R}^{\times})^m \rightarrow \mathfrak{B}$ by 
\begin{align*}
    &f_k(a_1,\cdots,a_m)\\
    =&E_{[\tau^{j_1}I_{i_1}]}(a_1) \cdots E_{[\tau^{j_k}I_{i_k}]}(a_k) n_{i_1}^{-1}\cdots n_{i_k}^{-1}E_{[TS_{i_{k+1}}]}(-a_{k+1}) \cdots E_{[TS_{i_m}]}(-a_m) \cdot \mathbf{B}^+.
\end{align*}
In particular, we have 
\begin{align*}
    f_0(a_1,\cdots,a_m)&=E_{[TS_{i_1}]}(-a_1)\cdots E_{[TS_{i_m}]}(-a_m)\cdot \mathbf{B}^+,\\
    f_m(a_1,\cdots,a_m)&=E_{[\tau^{j_1}I_{i_1}]}(a_1) \cdots E_{[\tau^{j_m}I_{i_m}]}(a_m) n_{i_1}^{-1}\cdots n_{i_m}^{-1} \cdot \mathbf{B}^+\\
    &=E_{[\tau^{j_1}I_{i_1}]}(a_1) \cdots E_{[\tau^{j_m}I_{i_m}]}(a_m)  \cdot \mathbf{B}^-.
\end{align*}

\begin{lemma}
    For any $X\in \operatorname{ind}\mathcal{R}$ and $t\neq 0$, we have 
    \begin{align*}
        &E_{[TX]}(-t)=E_{[X]}(t^{-1})n_{[X]}^{-1}h_{[X]}(t)E_{[X]}(t^{-1}),\\
        &E_{[X]}(t)=E_{[TX]}(-t^{-1}) n_{[X]} h_{[X]}(t^{-1})E_{[TX]}(-t^{-1}).
    \end{align*}
\end{lemma}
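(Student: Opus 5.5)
The plan is to reduce both identities to $2\times 2$ matrix identities in $\operatorname{SL}_2(\mathbb{R})$, as in the proof of the first lemma of this section. We use the group homomorphism $\operatorname{SL}_2(\mathbb{R})\rightarrow \mathbf{G}$ attached to $X$. It sends $\begin{pmatrix} 1 & a\\ 0 & 1\end{pmatrix}\mapsto E_{[X]}(a)$, $\begin{pmatrix} 1 & 0\\ -a & 1\end{pmatrix}\mapsto E_{[TX]}(a)$ and $\begin{pmatrix} a & 0\\ 0 & a^{-1}\end{pmatrix}\mapsto h_{[X]}(a)$. We also need the image of $n_{[X]}$. Since $n_{[X]}=E_{[X]}(1)E_{[TX]}(1)E_{[X]}(1)$, it is the image of
\begin{align*}
\begin{pmatrix} 1 & 1\\ 0 & 1\end{pmatrix}\begin{pmatrix} 1 & 0\\ -1 & 1\end{pmatrix}\begin{pmatrix} 1 & 1\\ 0 & 1\end{pmatrix}=\begin{pmatrix} 0 & 1\\ -1 & 0\end{pmatrix},
\end{align*}
so $n_{[X]}^{-1}$ is the image of $\begin{pmatrix} 0 & -1\\ 1 & 0\end{pmatrix}$.

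For the first identity, the left side $E_{[TX]}(-t)$ is the image of $\begin{pmatrix} 1 & 0\\ t & 1\end{pmatrix}$. On the right side, first compute $n^{-1}h(t)=\begin{pmatrix} 0 & -t^{-1}\\ t & 0\end{pmatrix}$. Multiplying on the right by $\begin{pmatrix} 1 & t^{-1}\\ 0 & 1\end{pmatrix}$ gives $\begin{pmatrix} 0 & -t^{-1}\\ t & 1\end{pmatrix}$. Multiplying on the left by $\begin{pmatrix} 1 & t^{-1}\\ 0 & 1\end{pmatrix}$ then gives $\begin{pmatrix} 1 & 0\\ t & 1\end{pmatrix}$, as required.

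For the second identity, the left side $E_{[X]}(t)$ is the image of $\begin{pmatrix} 1 & t\\ 0 & 1\end{pmatrix}$. On the right side, $E_{[TX]}(-t^{-1})$ is the image of $\begin{pmatrix} 1 & 0\\ t^{-1} & 1\end{pmatrix}$, and $n\,h(t^{-1})=\begin{pmatrix} 0 & t\\ -t^{-1} & 0\end{pmatrix}$. Multiplying on the right by $\begin{pmatrix} 1 & 0\\ t^{-1} & 1\end{pmatrix}$ gives $\begin{pmatrix} 1 & t\\ -t^{-1} & 0\end{pmatrix}$. Multiplying on the left by the same lower unitriangular matrix gives $\begin{pmatrix} 1 & t\\ 0 & 1\end{pmatrix}$. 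Applying the homomorphism to both matrix identities finishes the proof.

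The only real obstacle is making sure that the normalizations of $n_{[X]}$ and $h_{[X]}$ agree with this homomorphism. This is given by $n_{[X]}=E_{[X]}(1)E_{[TX]}(1)E_{[X]}(1)$ and the description of $h_{[X]}$ used in the first lemma of this section. Once the images are fixed, the remaining work is the routine matrix multiplication above.
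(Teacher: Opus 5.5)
Your proof is correct. The images you assign match the paper's normalizations: the image of $E_{[X]}(t)E_{[TX]}(t^{-1})E_{[X]}(t)$ is $\begin{pmatrix} 0 & t\\ -t^{-1} & 0\end{pmatrix}$, which is also the image of $h_{[X]}(t)n_{[X]}$. Both of your matrix products also check out.

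Your route differs from the paper's. The paper stays inside $\mathbf{G}$ and works with the relation $E_{[X]}(t)E_{[TX]}(t^{-1})E_{[X]}(t)=n_{[X]}(t)=h_{[X]}(t)n_{[X]}$ together with the conjugation formulas of Lemma \ref{conjugate}. For the first identity, inverting $n_{[X]}(t^{-1})=h_{[X]}(t)^{-1}n_{[X]}$ gives $n_{[X]}^{-1}h_{[X]}(t)=E_{[X]}(-t^{-1})E_{[TX]}(-t)E_{[X]}(-t^{-1})$, which is exactly what is needed. The second identity additionally needs facts such as $n_{[X]}h_{[X]}(t^{-1})=h_{[X]}(t)n_{[X]}$, which come from Lemma \ref{conjugate}(4) and $h_{[TX]}(s)=h_{[X]}(s^{-1})$. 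The paper leaves these implicit.

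Your reduction to $\operatorname{SL}_2(\mathbb{R})$ turns all such auxiliary relations into a mechanical, self-checking computation. The price is that it depends on the existence of the rank-one homomorphism $\operatorname{SL}_2(\mathbb{R})\to\mathbf{G}$ with exactly this normalization. That is a standard fact for Chevalley groups, and the paper already relies on it in the first lemma of this section, so nothing is lost. The paper's argument, by contrast, uses only relations already established for $\mathbf{G}$.
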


\begin{proof}
    This Lemma immediately follows from the fact that 
    \begin{align*}
        E_{[X]}(t)E_{[TX]}(t^{-1})E_{[X]}(t)=n_{[X]}(t)=h_{[X]}(t)n_{[X]}
    \end{align*}
    and Lemma \ref{conjugate}.
\end{proof}

\begin{proposition}\label{fk}
   For each $0\leq k\leq m-1$ and $(a_1,\cdots,a_m)\in (\varepsilon_1\mathbb{R}_{>0})\times \cdots \times (\varepsilon_k\mathbb{R}_{>0})\times \mathbb{R}_{>0}^{m-k}$, we have 
    \begin{align*}
        f_k(a_1,\cdots,a_m)=f_{k+1}\varphi_k(a_1,\cdots,a_m).
    \end{align*}
\end{proposition}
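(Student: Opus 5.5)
The plan is to take the factor $E_{[TS_{i_{k+1}}]}(-a_{k+1})$ that sits immediately to the right of $n_{i_1}^{-1}\cdots n_{i_k}^{-1}$ in $f_k$ and rewrite it with the previous Lemma. With $X=S_{i_{k+1}}$ and $t=a_{k+1}>0$ this gives
\begin{align*}
E_{[TS_{i_{k+1}}]}(-a_{k+1})=E_{[S_{i_{k+1}}]}(a_{k+1}^{-1})\,n_{i_{k+1}}^{-1}\,h_{[S_{i_{k+1}}]}(a_{k+1})\,E_{[S_{i_{k+1}}]}(a_{k+1}^{-1}).
\end{align*}
Each of the three new pieces is then moved to its proper place.

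First, the leftmost factor $E_{[S_{i_{k+1}}]}(a_{k+1}^{-1})$ is moved left past $n_{i_1}^{-1}\cdots n_{i_k}^{-1}$. By Lemma \ref{conjugate}(1), applied repeatedly, conjugating by $n_{i_1}^{-1}\cdots n_{i_k}^{-1}$ turns it into $E_{[\tau^{j_{k+1}}I_{i_{k+1}}]}(\varepsilon_{k+1}a_{k+1}^{-1})$. Here Lemma \ref{tauIS} identifies $\omega_{S_{i_1}}\cdots\omega_{S_{i_k}}(S_{i_{k+1}})$ with $\tau^{j_{k+1}}I_{i_{k+1}}$. The accumulated sign is exactly the Tits sign $\varepsilon_{k+1}$, by its defining identity $n_{i_k}\cdots n_{i_1}u_{\tau^{j_{k+1}}I_{i_{k+1}}}=\varepsilon_{k+1}u_{S_{i_{k+1}}}$. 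This matches $a^{(k+1)}_{k+1}=\varepsilon_{k+1}a_{k+1}^{-1}$ in rule (2), and the first $k+1$ coordinates then agree with $f_{k+1}$. Keeping track of the sign is a bookkeeping point. I expect it to follow from the fact that, for $t=\pm1$, $n_{[X]}(t)$ and $n_{[X]}^{-1}$ differ by an element of $\mathbf{H}$ acting by signs, so I would verify it by acting on $u$'s rather than through repeated use of (1).

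Second, the remaining product
\begin{align*}
h_{[S_{i_{k+1}}]}(a_{k+1})\,E_{[S_{i_{k+1}}]}(a_{k+1}^{-1})\,E_{[TS_{i_{k+2}}]}(-a_{k+2})\cdots E_{[TS_{i_m}]}(-a_m)
\end{align*}
has to be rewritten as
\begin{align*}
E_{[TS_{i_{k+2}}]}(-a'_{k+2})\cdots E_{[TS_{i_m}]}(-a'_m)\cdot b,\qquad b\in\mathbf{B}^+,
\end{align*}
since $b$ is absorbed when acting on $\mathbf{B}^+$. To do this I push $E_{[S_{i_{k+1}}]}(c)$, with $c$ initially $a_{k+1}^{-1}$, to the right through the negative factors. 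There are two cases.

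If $i_l\neq i_{k+1}$, then $S_{i_{k+1}}$ and $TS_{i_l}$ have no indecomposable extension, since simple roots and negative simple roots commute. So $E_{[S_{i_{k+1}}]}(c)$ passes through by Proposition \ref{commutator}.

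If $i_l=i_{k+1}$, I use the rank-one identity in $\operatorname{SL}_2$ from the earlier Lemma, $E_{[X]}(a)h_{[X]}(b)E_{[TX]}(-c)=\cdots$, in its unipotent form:
\begin{align*}
E_{[X]}(c)E_{[TX]}(-a)=E_{[TX]}\Big(-\tfrac{a}{1+ca}\Big)\,h_{[X]}(1+ca)\,E_{[X]}\Big(\tfrac{c}{1+ca}\Big).
\end{align*}
This creates a new torus factor $h_{[S_{i_{k+1}}]}$ and updates $c$. Writing $c^{-1}$ as the running sum $a_{k+1}+\sum_{k+1<s<l,\ i_s=i_{k+1}}a_s$ shows that the parameters evolve by adding the $a_s$.

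Third, all the torus factors, namely the original $h(a_{k+1})$ and the new $h(1+ca)$'s, are pushed to the right using Lemma \ref{conjugate}(2). Each passage rescales the parameter of a later $E_{[TS_{i_l}]}$ by a power with exponent $-A_{S_{i_{k+1}},S_{i_l}}$. Combining the $h$-factors telescopically gives the product $h(a_{k+1}+\sum_{s<l}a_s)$ at position $l$. This yields exactly rule (3). For $i_l=i_{k+1}$ it yields rule (4), where the two inverse factors come from $A=2$ split across the sums before and after including $a_l$.

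The main obstacle is this telescoping bookkeeping: matching the product of torus rescalings and the modified $\frac{a}{1+ca}$ exactly to formulas (3) and (4). I would first check the case of a single repeated index, then do an induction on $l$ that carries the invariant "current $c^{-1}$ equals the running sum and the accumulated torus element equals $h_{[S_{i_{k+1}}]}(\text{running sum})$".
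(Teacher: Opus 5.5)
Your proposal is correct and follows the paper's proof essentially step for step. It uses the same rewriting of $E_{[TS_{i_{k+1}}]}(-a_{k+1})$ via the preceding Lemma, the same conjugation of $E_{[S_{i_{k+1}}]}(a_{k+1}^{-1})$ past $n_{i_1}^{-1}\cdots n_{i_k}^{-1}$ with the Tits sign (via Lemma \ref{tauIS}), and the same rightward push of $E_{[S_{i_{k+1}}]}$ using the rank-one $\operatorname{SL}_2$ identity, with torus factors absorbed into $\mathbf{B}^+$. The only difference is bookkeeping: the paper moves each new torus factor to the right immediately (so its running parameter becomes $c(1+ca)$) and leaves the match with rules (3) and (4) to ``inductive calculations''. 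Your invariant $h_{[S_{i_{k+1}}]}(\sigma)E_{[S_{i_{k+1}}]}(\sigma^{-1})$, with $\sigma$ the running sum, makes that match explicit, and it does reproduce both rules.
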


\begin{proof}
    Denote $h_{[S_i]}(t)$ by $h_i(t)$.
For $(a_1,\cdots,a_m)\in (\mathbb{R}^{\times})^m$, we have 
\begin{align*}
    &f_k(a_1,\cdots,a_m)\\
    =&E_{[\tau^{j_1}I_{i_1}]}(a_1) \cdots E_{[\tau^{j_k}I_{i_k}]}(a_k) n_{i_1}^{-1}\cdots n_{i_k}^{-1} E_{[S_{i_{k+1}}]}(a_{k+1}^{-1}) n_{i_{k+1}}^{-1} h_{i_{k+1}}(a_{k+1}) E_{[S_{i_{k+1}}]}(a_{k+1}^{-1})\times \\
    & \times  E_{[TS_{i_{k+2}}]}(-a_{k+2}) \cdots E_{[TS_{i_m}]}(-a_m) \cdot \mathbf{B}^+\\
    =&E_{[\tau^{j_1}I_{i_1}]}(a_1) \cdots E_{[\tau^{j_k}I_{i_k}]}(a_k) E_{[\tau^{j_{k+1}}I_{i_{k+1}}]}(\varepsilon_{k+1}a_{k+1}^{-1}) n_{i_1}^{-1}\cdots n_{i_{k+1}}^{-1}\times \\ 
    & \times h_{i_{k+1}}(a_{k+1}) E_{[S_{i_{k+1}}]}(a_{k+1}^{-1}) E_{[TS_{i_{k+2}}]}(-a_{k+2}) \cdots E_{[TS_{i_m}]}(-a_m) \cdot \mathbf{B}^+.
\end{align*}

    For $k+1\leq s \leq m$, define a map $f_s':\mathbb{R}_{>0}^{m-k}\rightarrow \mathfrak{B}$ by 
    \begin{align*}
        &f_s'(c_{k+1},\cdots,c_m)\\
        =&E_{[TS_{i_{k+2}}]}(-c_{k+2})\cdots E_{[TS_{i_{s}}]}(-c_s) E_{[S_{i_{k+1}}]}(c_{k+1}) E_{[TS_{i_{s+1}}]}(-c_{s+1})\cdots E_{[TS_{i_m}]}(-c_m)\cdot \mathbf{B}^+.
    \end{align*}

    In particular, we have 
    \begin{align*}
        f_{k+1}'(c_{k+1},\cdots,c_m)&=E_{[S_{i_{k+1}}]}(c_{k+1}) E_{[TS_{i_{k+2}}]}(-c_{k+2})\cdots E_{[TS_{i_{m}}]}(-c_m)\cdot \mathbf{B}^+,\\
        f_m'(c_{k+1},\cdots,c_m)&=E_{[TS_{i_{k+2}}]}(-c_{k+2})\cdots E_{[TS_{i_{m}}]}(-c_m)E_{[S_{i_{k+1}}]}(c_{k+1}) \cdot \mathbf{B}^+\\
        &= E_{[TS_{i_{k+2}}]}(-c_{k+2})\cdots E_{[TS_{i_{m}}]}(-c_m) \cdot \mathbf{B}^+.
    \end{align*}

    If $i_{k+1}\neq i_{s+1}$, then $f_s'=f_{s+1}'$.

    If $i_{k+1}=i_{s+1}$, then 
    \begin{align*}
        &f_s'(c_{k+1},\cdots,c_m)\\
        &=E_{[TS_{i_{k+2}}]}(-c_{k+2})\cdots E_{[TS_{i_{s}}]}(-c_s) E_{[TS_{i_{s+1}}]}(-\frac{c_{s+1}}{1+c_{k+1}c_{s+1}}) E_{[S_{i_{k+1}}]}((1+c_{k+1}c_{s+1})c_{k+1}) \times \\
        &\times h_{i_{k+1}}(1+c_{k+1}c_{s+1}) E_{[TS_{i_{s+2}}]}(-c_{s+2})\cdots E_{[TS_{i_m}]}(-c_m)\cdot \mathbf{B}^+\\
        &=E_{[TS_{i_{k+2}}]}(-c_{k+2})\cdots E_{[TS_{i_{s}}]}(-c_s) E_{[TS_{i_{s+1}}]}(-\frac{c_{s+1}}{1+c_{k+1}c_{s+1}}) E_{[S_{i_{k+1}}]}((1+c_{k+1}c_{s+1})c_{k+1})\times \\
        &\times E_{[TS_{i_{s+2}}]}(-c_{s+2}(1+c_{k+1}c_{s+1})^{-A_{S_{i_{k+1}},S_{i_{s+2}}}})\cdots E_{[TS_{i_m}]}(-c_m(1+c_{k+1}c_{s+1})^{-A_{S_{i_{k+1}},S_{i_m}}})\cdot \mathbf{B}^+.
    \end{align*}

    Thus we obtain a map $g_s':\mathbb{R}_{>0}^{m-k}\rightarrow \mathbb{R}_{>0}^{m-k}$ such that $f_s'=f_{s+1}'g_s'$.
    For $s$ such that $i_{k+1}\neq i_{s+1}$, let $g_s'=\operatorname{id}$.
    Using these maps, we have 
    \begin{align*}
        f_{k+1}'=f_{k+2}'g_{k+1}'=\cdots=f_m'g_{m-1}'\cdots g_{k+1}'.
    \end{align*}
    Let $g_k=g_{m-1}'\cdots g_{k+1}':\mathbb{R}_{>0}^{m-k}\rightarrow \mathbb{R}_{>0}^{m-k}$ be the composition.

    Now for $(a_1,\cdots,a_m)\in (\varepsilon_1\mathbb{R}_{>0})\times \cdots \times (\varepsilon_k\mathbb{R}_{>0})\times \mathbb{R}_{>0}^{m-k}$, let
    \begin{align*}
       (d_{k+1},\cdots,d_m)=g_k(a_{k+1}^{-1},a_{k+2},\cdots,a_m) .
    \end{align*}
     Then 
    \begin{align*}
    &f_k(a_1,\cdots,a_m)\\
    =&E_{[\tau^{j_1}I_{i_1}]}(a_1) \cdots E_{[\tau^{j_k}I_{i_k}]}(a_k) E_{[\tau^{j_{k+1}}I_{i_{k+1}}]}(\varepsilon_{k+1}a_{k+1}^{-1}) n_{i_1}^{-1}\cdots n_{i_{k+1}}^{-1}\times \\ 
    & \times h_{i_{k+1}}(a_{k+1})  E_{[TS_{i_{k+2}}]}(-d_{k+2}) \cdots E_{[TS_{i_m}]}(-d_m) \cdot \mathbf{B}^+\\
    =&E_{[\tau^{j_1}I_{i_1}]}(a_1) \cdots E_{[\tau^{j_k}I_{i_k}]}(a_k) E_{[\tau^{j_{k+1}}I_{i_{k+1}}]}(\varepsilon_{k+1}a_{k+1}^{-1}) n_{i_1}^{-1}\cdots n_{i_{k+1}}^{-1}\times \\ 
     & \times  E_{[TS_{i_{k+2}}]}(-d_{k+2}a_{k+1}^{-A_{S_{i_{k+1}},S_{i_{k+2}}}}) \cdots E_{[TS_{i_m}]}(-d_ma_{k+1}^{-A_{S_{i_{k+1}},S_{i_m}}}) \cdot \mathbf{B}^+.
    \end{align*}
    That is 
    \begin{align*}
        f_k(a_1,\cdots,a_m)&=f_{k+1}(a_1,\cdots,a_k,\varepsilon_{k+1}a_{k+1}^{-1},d_{k+2}a_{k+1}^{-A_{S_{i_{k+1}},S_{i_{k+2}}}},\cdots,d_ma_{k+1}^{-A_{S_{i_{k+1}},S_{i_m}}})\\
        &=f_{k+1}\varphi_k(a_1,\cdots,a_m),
    \end{align*}
    where the last equality follows from definitions and inductive calculations. 
    The proposition is proved.
\end{proof}

\begin{theorem}\label{u=u}
    The subsets 
    $\mathfrak{U}^+_{>0}=\mathbf{U}^+_{>0}$.
\end{theorem}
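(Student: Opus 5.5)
The plan is to compare the two families of Borel subgroups $f_0$ and $f_m$, mirroring Lusztig's proof of Proposition \ref{u+-bij}. Set $\Phi=\varphi_{m-1}\circ\cdots\circ\varphi_0$. Applying Lemma \ref{varphikbij} at each step, and tracking how the successive image conditions become the defining inequalities $\beta_k>0$, I expect $\Phi$ to be a homeomorphism from $\mathbb{R}_{>0}^m$ onto
\[
\{(\varepsilon_1b_1,\dots,\varepsilon_mb_m)\mid (b_1,\dots,b_m)\in\Omega_Q\}.
\]
This is exactly how the functions $a^{(k)}_t(b)$ and $\beta_k(b)$ were constructed. The inverse maps $\varphi_k^{-1}$ are defined precisely when $\beta_{k+1}>0$, so composing backwards from $a^{(m)}=(\varepsilon_ib_i)$ lands in $\mathbb{R}_{>0}^m$ if and only if all $\beta_k>0$. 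By Proposition \ref{fk}, for every $a\in\mathbb{R}_{>0}^m$ we get $f_0(a)=f_m(\Phi(a))$.

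Next I translate this into the groups. For $a\in\mathbb{R}^m_{>0}$, $f_0(a)=v\cdot\mathbf{B}^+$ with $v=E_{[TS_{i_1}]}(-a_1)\cdots E_{[TS_{i_m}]}(-a_m)$ ranging over all of $\mathbf{U}^-_{>0}$. Likewise $f_m(\Phi(a))=u\cdot\mathbf{B}^-$ with $u=E_{[\tau^{j_1}I_{i_1}]}(\varepsilon_1b_1)\cdots E_{[\tau^{j_m}I_{i_m}]}(\varepsilon_mb_m)$, and as $a$ varies this $u$ ranges over all of $\mathfrak{U}^+_{>0}$. So every $u\in\mathfrak{U}^+_{>0}$ satisfies $u\cdot\mathbf{B}^-=v\cdot\mathbf{B}^+$ for some $v\in\mathbf{U}^-_{>0}$, and conversely every $v\in \mathbf{U}^-_{>0}$ arises this way.

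Now invoke Proposition \ref{u+-bij} for $\mathbf{G}$. There is a bijection $\phi:\mathbf{U}^+_{>0}\to\mathbf{U}^-_{>0}$ with $\phi(u')\cdot\mathbf{B}^+=u'\cdot\mathbf{B}^-$. Given $u\in\mathfrak{U}^+_{>0}$ with partner $v$, set $u'=\phi^{-1}(v)\in\mathbf{U}^+_{>0}$. Then $u\cdot\mathbf{B}^-=u'\cdot\mathbf{B}^-$, so $u'^{-1}u\in N(\mathbf{B}^-)\cap\mathbf{U}^+=\mathbf{B}^-\cap\mathbf{U}^+=\{1\}$, using that Borel subgroups are self-normalizing. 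Hence $u=u'\in\mathbf{U}^+_{>0}$, which gives $\mathfrak{U}^+_{>0}\subseteq\mathbf{U}^+_{>0}$. For the reverse inclusion, take $u'\in\mathbf{U}^+_{>0}$, pick $a$ with $f_0(a)=\phi(u')\cdot\mathbf{B}^+$, and let $u\in\mathfrak{U}^+_{>0}$ be the element corresponding to $\Phi(a)$. Then $u\cdot\mathbf{B}^-=u'\cdot\mathbf{B}^-$, and the same argument gives $u=u'$.

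The main obstacle is the first step: showing carefully that the image of $\mathbb{R}^m_{>0}$ under $\Phi$ is exactly the signed copy of $\Omega_Q$. Lemma \ref{varphikbij} describes the image of $\varphi_k$ on its full domain, but the later $\varphi_{k'}$ are applied only to the image of the earlier ones. I would handle this by running backwards from $b$: the point $b$ lies in the image of $\Phi$ if and only if each successive preimage $a^{(k)}$ lies in the domain of $\varphi_k$, i.e. has positive entries in positions $>k$. By Lemma \ref{determine} and the definition of $\beta_{k+1}$, this positivity is equivalent to $\beta_{k+1}>0$. A secondary point is justifying the real-field facts $\mathbf{B}^-\cap\mathbf{U}^+=1$ and the self-normalizing property. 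These follow from the Bruhat decomposition and from uniqueness in Proposition \ref{u+-bij}, which is stated at the level of $\mathbf{G}$.
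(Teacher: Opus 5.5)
Your proposal is correct and follows essentially the same route as the paper: chain Proposition \ref{fk} to get $f_0=f_m\circ\varphi_{m-1}\circ\cdots\circ\varphi_0$ on $\mathbb{R}_{>0}^m$, identify the image with the signed copy of $\Omega_Q$ via Lemma \ref{varphikbij}, and then use the bijection $\phi$ of Proposition \ref{u+-bij} in both directions. The only difference is cosmetic. You conclude $u=u'$ from $u'^{-1}u\in N_{\mathbf{G}}(\mathbf{B}^-)\cap\mathbf{U}^+=\mathbf{B}^-\cap\mathbf{U}^+=\{1\}$, whereas the paper places $v$ in both $u\,n_0\mathbf{B}^+$ and $\phi^{-1}(v)\,n_0\mathbf{B}^+$ and invokes uniqueness of the Bruhat form $u n_0 b$, which is the same fact; and you explain more carefully than the paper why the image of $\mathbb{R}^m_{>0}$ is exactly the region cut out by the inequalities $\beta_k>0$.
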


\begin{proof}
    For any $(b_1,\cdots,b_m)\in \Omega_Q$, 
\begin{align*}
    (a_1^{(k)},\cdots,a_m^{(k)})= \varphi_k^{-1}\cdots \varphi_{m-1}^{-1}(\varepsilon_1b_1,\cdots,\varepsilon_mb_m)\in (\varepsilon_1\mathbb{R}_{>0})\times \cdots \times (\varepsilon_k\mathbb{R}_{>0})\times \mathbb{R}_{>0}^{m-k}
\end{align*}
can be inductively defined.
Using Proposition \ref{fk}, we have
\begin{align*}
   f_k(a_1^{(k)},\cdots,a_m^{(k)})=f_{k+1}(a_1^{(k+1)},\cdots,a_m^{(k+1)}) 
\end{align*}
 for any $k$.
Thus 
\begin{align*}
    E_{[\tau^{j_1}I_{i_1}]}(\varepsilon_1b_1) \cdots E_{[\tau^{j_m}I_{i_m}]}(\varepsilon_mb_m)  \cdot \mathbf{B}^-    = E_{[TS_{i_1}]}(-a_1^{(0)})\cdots E_{[TS_{i_m}]}(-a_m^{(0)})\cdot \mathbf{B}^+.
\end{align*}
    Since $(a_1^{(0)},\cdots,a_m^{(0)})\in \mathbb{R}^m_{>0}$, we have
    \begin{align*}
    E_{[TS_{i_1}]}(-a_1^{(0)})\cdots E_{[TS_{i_m}]}(-a_m^{(0)})\in \mathbf{U}^-_{>0} \subseteq \mathbf{U}^+ n_0 \mathbf{B}^+.
    \end{align*} 
Recall that there exists a unique bijection $\phi:\mathbf{U}^+_{>0}\rightarrow \mathbf{U}^-_{>0}$ such that $\phi(u)\cdot \mathbf{B}^+ = u\cdot \mathbf{B}^-$ for any $u\in \mathbf{U}^+_{>0}$.
Then we have 
\begin{align*}
    &E_{[TS_{i_1}]}(-a_1^{(0)})\cdots E_{[TS_{i_m}]}(-a_m^{(0)}) \\
    =& E_{[\tau^{j_1}I_{i_1}]}(\varepsilon_1b_1) \cdots E_{[\tau^{j_m}I_{i_m}]}(\varepsilon_mb_m) n_0 b \\
    =& \phi^{-1}(E_{[TS_{i_1}]}(-a_1^{(0)})\cdots E_{[TS_{i_m}]}(-a_m^{(0)})) n_0 b',
\end{align*}
for some $b,b'\in \mathbf{B}^+$.
By Bruhat decomposition, the expression of the form $un_0b$ with $u\in \mathbf{U}^+,b\in \mathbf{B}^+$, should be unique.
This means 
\begin{align*}
    E_{[\tau^{j_1}I_{i_1}]}(\varepsilon_1b_1) \cdots E_{[\tau^{j_m}I_{i_m}]}(\varepsilon_mb_m) =\phi^{-1}(E_{[TS_{i_1}]}(-a_1^{(0)})\cdots E_{[TS_{i_m}]}(-a_m^{(0)})) \in \mathbf{U}^+_{>0},
\end{align*}
and thus the subset $\mathfrak{U}^+_{>0}$ is contained in $\mathbf{U}^+_{>0}$.

On the other hand, for any $u\in \mathbf{U}^+_{>0}$, there exists a unique $(a_1,\cdots,a_m)\in \mathbb{R}^m_{>0}$, such that $u=\phi^{-1}(E_{[TS_{i_1}]}(-a_1)\cdots E_{[TS_{i_m}]}(-a_m))$.
Let 
\begin{align*}
    (c_1,\cdots,c_m)=\varphi_{m-1}\cdots\varphi_0(a_1,\cdots,a_m)\in (\varepsilon_1\mathbb{R}_{>0})\times \cdots \times (\varepsilon_m \mathbb{R}_{>0}).
\end{align*}
Then by definition we can check that $(\varepsilon_1c_1,\cdots,\varepsilon_m c_m)\in \Omega_Q$.
By the same argument as above, we have 
\begin{align*}
    E_{[\tau^{j_1}I_{i_1}]}(c_1) \cdots E_{[\tau^{j_m}I_{i_m}]}(c_m)\cdot \mathbf{B}^- &= f_m(c_1,\cdots,c_m)\\
    &=f_m\varphi_{m-1}\cdots\varphi_0(a_1,\cdots,a_m)\\
    &=f_0(a_1,\cdots,a_m)\\
    &=E_{[TS_{i_1}]}(-a_1)\cdots E_{[TS_{i_m}]}(-a_m) \cdot \mathbf{B}^+,
\end{align*}
and thus 
\begin{align*}
    u=\phi^{-1}(E_{[TS_{i_1}]}(-a_1)\cdots E_{[TS_{i_m}]}(-a_m))=E_{[\tau^{j_1}I_{i_1}]}(c_1) \cdots E_{[\tau^{j_m}I_{i_m}]}(c_m) \in \mathfrak{U}^+_{>0}.
\end{align*}
The subset $\mathbf{U}^+_{>0}$ is contained in $\mathfrak{U}^+_{>0}$.

Hence $\mathfrak{U}^+_{>0}=\mathbf{U}^+_{>0}$, and the theorem is proved.
\end{proof}

This Theorem characterizes the size of the monoid $\mathbf{U}^+_{>0}$ of totally positive elements, with respect to the root subgroups corresponding to the indecomposable objects of the chosen complete hereditary subcategory $\mathcal{B}$ of $\mathcal{R}$.

Now we consider $\mathbf{U}^-_{>0}$.

Let $\tilde{\varepsilon}_1=1$, and for $2\leq k\leq m$, define $\tilde{\varepsilon}_k\in \{\pm 1\}$ such that 
\begin{align*}
    n_{i_1}\cdots n_{i_{k-1}} u_{S_{i_k}}=\tilde{\varepsilon}_k u_{\tau^{j_k}I_{i_k}}.
\end{align*}
We have 
\begin{align*}
    \tilde{\varepsilon}_k=\varepsilon_k (-1)^{A_{S_{i_1},\omega_{S_{i_2}}\cdots\omega_{S_{i_{k-1}}}(S_{i_k})}+A_{S_{i_2},\omega_{S_{i_3}}\cdots\omega_{S_{i_{k-1}}}(S_{i_k})}+\cdots+A_{S_{i_{k-1}},S_{i_k}}}.
\end{align*}

\begin{definition}
    Let $\mathfrak{U}^-_{>0}$ be the subset of $\mathbf{U}^-$ consisting of the following elements:
    \begin{align*}
        \mathfrak{U}^-_{>0}=\{ E_{[T\tau^{j_1}I_{i_1}]}(-\tilde{\varepsilon}_1b_1)\cdots  E_{[T\tau^{j_m}I_{i_m}]}(-\tilde{\varepsilon}_mb_m) \mid (b_1,\cdots,b_m)\in \Omega_Q \}.
    \end{align*}
\end{definition}

\begin{theorem}
    The subsets $\mathfrak{U}^-_{>0}=\mathbf{U}^-_{>0}$.
\end{theorem}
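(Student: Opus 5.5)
We do not redo the Borel-subgroup computation. Instead we transport Theorem \ref{u=u} through an automorphism of $\mathbf{G}$ that swaps $\mathbf{U}^+$ and $\mathbf{U}^-$ and preserves total positivity. The natural candidate is conjugation by a lift of the longest element. Take
\begin{align*}
    \dot n_0 = n_{i_1}\cdots n_{i_m}.
\end{align*}
We first compute the action of $\dot n_0$ on the generators. We use $E_{[\tau^{j_k}I_{i_k}]}$, the identity $\omega_X(M)=$ root-reflection of $M$, and Lemma \ref{conjugate}(1) repeatedly. This gives
\begin{align*}
    n_{i_1}\cdots n_{i_{k-1}}\, E_{[S_{i_k}]}(t)\,(n_{i_1}\cdots n_{i_{k-1}})^{-1} = E_{[\tau^{j_k}I_{i_k}]}(\tilde{\varepsilon}_k t),
\end{align*}
since $\tilde{\varepsilon}_k$ is by definition the sign with which $n_{i_1}\cdots n_{i_{k-1}}$ sends $u_{S_{i_k}}$ to $u_{\tau^{j_k}I_{i_k}}$. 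The analogous statement for $TS_{i_k}\mapsto T\tau^{j_k}I_{i_k}$ follows from $\eta_{X,TY}=\eta_{XY}$.

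The cleaner route is to reproduce the proof of Theorem \ref{u=u} with the roles of $\pm$ exchanged. We define
\begin{align*}
    \tilde f_k(a_1,\cdots,a_m)=E_{[T\tau^{j_1}I_{i_1}]}(-a_1)\cdots E_{[T\tau^{j_k}I_{i_k}]}(-a_k)\, n_{i_1}\cdots n_{i_k}\, E_{[S_{i_{k+1}}]}(a_{k+1})\cdots E_{[S_{i_m}]}(a_m)\cdot \mathbf{B}^-.
\end{align*}
We then prove the analogue of Proposition \ref{fk}, namely $\tilde f_k=\tilde f_{k+1}\tilde\varphi_k$, where $\tilde\varphi_k$ has the same formulas as $\varphi_k$ with $\varepsilon$ replaced by $\tilde\varepsilon$. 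For the splitting step we use the second identity of the last lemma,
\begin{align*}
    E_{[X]}(t)=E_{[TX]}(-t^{-1})\, n_{[X]}\, h_{[X]}(t^{-1})\, E_{[TX]}(-t^{-1}).
\end{align*}
To push the $h$ and $E_{[TS_{i_{k+1}}]}$ factors to the right through the $E_{[S_{i_s}]}$, we use the $\operatorname{SL}_2$ lemma (in its transposed form) together with Lemma \ref{conjugate}(2). The conjugation $n_{i_1}\cdots n_{i_k}E_{[TS_{i_{k+1}}]}(-t)(\cdots)^{-1}=E_{[T\tau^{j_{k+1}}I_{i_{k+1}}]}(-\tilde\varepsilon_{k+1}t)$ is exactly where $\tilde\varepsilon_{k+1}$ enters, replacing $\varepsilon_{k+1}$.

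The exponent bookkeeping is the step that must be checked, and I expect it to be the main obstacle. When $h_{i_{k+1}}(t)$ passes an $E_{[S_{i_l}]}$ rather than an $E_{[TS_{i_l}]}$, the exponent $-A$ becomes $+A$. The map from $\tilde f_k$ to $\tilde f_{k+1}$ is then the old map with $a_l$ replaced by $a_l^{-1}$ in the relevant factors. I would show, by direct comparison with the definition of $\varphi_k$ (clauses (3) and (4)), that after the substitution $b\mapsto b$ the resulting chain $\tilde\varphi_{m-1}\cdots\tilde\varphi_0$ has image exactly $\{(\tilde\varepsilon_kb_k)_k : b\in\Omega_Q\}$. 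The key point is that only $\varepsilon_{k+1}$ in clause (2) changes, and it changes to $\tilde\varepsilon_{k+1}$. The inequalities $\beta_k>0$ involve only the normalized $b_k=\varepsilon_ka_k^{(k)}$, so they are unchanged. Lemma \ref{varphikbij} then holds verbatim.

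With this in hand, the proof ends exactly as in Theorem \ref{u=u}. We have
\begin{align*}
    \tilde f_m(\tilde\varepsilon_1b_1,\cdots,\tilde\varepsilon_mb_m)=u^-\cdot \mathbf{B}^+
    \quad\text{and}\quad
    \tilde f_0(a^{(0)})=E_{[S_{i_1}]}(a_1^{(0)})\cdots E_{[S_{i_m}]}(a_m^{(0)})\cdot\mathbf{B}^-,
\end{align*}
with the latter lying in $\mathbf{U}^+_{>0}\cdot\mathbf{B}^-$. We apply the bijection of Proposition \ref{u+-bij} ($\phi(u)\cdot\mathbf{B}^+=u\cdot\mathbf{B}^-$) together with uniqueness in the Bruhat decomposition for $\mathbf{B}^-$. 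This gives $\mathfrak{U}^-_{>0}\subseteq\mathbf{U}^-_{>0}$. Running the chain backwards from an arbitrary element of $\mathbf{U}^-_{>0}=\phi(\mathbf{U}^+_{>0})$ gives the reverse inclusion.
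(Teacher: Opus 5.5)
Apart from the opening detour about conjugating by $\dot n_0$ (which you do not pursue; its only surviving content is the conjugation identity that brings in $\tilde\varepsilon_k$), your proposal is essentially the paper's proof: the same maps $\tilde f_k$ taken modulo $\mathbf{B}^-$, the same splitting identity, the same $\tilde\varphi_k$ obtained from $\varphi_k$ by replacing $\varepsilon_{k+1}$ with $\tilde\varepsilon_{k+1}$, and the same closing argument via $\phi$ and Bruhat uniqueness. The exponent bookkeeping you flag works out exactly as in the paper's observation $d_l=d_l'$: the torus factors that arise are inverted, namely $h_{i_{k+1}}(a_{k+1}^{-1})$ instead of $h_{i_{k+1}}(a_{k+1})$, and $h(\frac{1}{1+ac})$ instead of $h(1+ac)$ at repeated indices, and this inversion cancels the sign flip $A_{X,TY}=-A_{X,Y}$, so clauses (3) and (4) of $\varphi_k$ carry over literally unchanged.
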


\begin{proof}
    Recall the following facts: $\eta_{X,TY}=\eta_{XY}, \omega_X(TY)=T\omega_X(Y)$.
    Thus we have $n_{[X]}E_{[TY]}(s)n_{[X]}^{-1}=E_{[T\omega_X(Y)]}(\eta_{XY}s)$.    
    Similar to the previous process, for $0\leq k\leq m$, we define a map $\tilde{f}_k:(\mathbb{R}^{\times})^m\rightarrow \mathfrak{B}$ by 
    \begin{align*}
        &\tilde{f}_k(a_1,\cdots,a_m)\\
        =& E_{[T\tau^{j_1}I_{i_1}]}(-a_1)\cdots E_{[T\tau^{j_k}I_{i_k}]}(-a_k)n_{i_1}\cdots n_{i_k} E_{[S_{i_{k+1}}]}(a_{k+1})\cdots E_{[S_{i_m}]}(a_m)\cdot \mathbf{B}^-.
    \end{align*}
    Then 
    \begin{align*}
         \tilde{f}_k&(a_1,\cdots,a_m)= E_{[T\tau^{j_1}I_{i_1}]}(-a_1)\cdots E_{[T\tau^{j_k}I_{i_k}]}(-a_k) E_{[T\tau^{j_{k+1}}I_{i_{k+1}}]}(-\tilde{\varepsilon}_{k+1}a_{k+1}^{-1})\times \\
        & \times  n_{i_1}\cdots n_{i_{k+1}} h_{i_{k+1}}(a_{k+1}^{-1}) E_{[TS_{i_{k+1}}]}(-a_{k+1}^{-1}) E_{[S_{i_{k+2}}]}(a_{k+2}) \cdots E_{[S_{i_m}]}(a_m)\cdot \mathbf{B}^-.
    \end{align*}
    
    For $k+1\leq s\leq m$, define $\tilde{f}_s':\mathbb{R}_{>0}^{m-k}\rightarrow\mathfrak{B}$ by 
    \begin{align*}
        &\tilde{f}_s'(a_{k+1},\cdots,a_m)\\
        =&E_{[S_{i_{k+2}}]}(a_{k+2})\cdots E_{[S_{i_s}]}(a_s)E_{[TS_{i_{k+1}}]}(-a_{k+1})E_{[S_{i_{s+1}}]}(a_{s+1})\cdots E_{[S_{i_m}]}(a_m) \cdot \mathbf{B}^-.
    \end{align*}
    By similar calculations to that in the proof of Proposition \ref{fk}, we obtain a map $\tilde{g}_k:\mathbb{R}_{>0}^{m-k}\rightarrow \mathbb{R}_{>0}^{m-k}$ such that $\tilde{f}_{k+1}'=\tilde{f}_m'\tilde{g}_k$.
    Moreover, it can be seen from the calculations that, for any $(a_{k+1},\cdots,a_m)\in\mathbb{R}_{>0}^{m-k}$, let $(d_{k+1},\cdots,d_m)=g_k(a_{k+1}^{-1},a_{k+2},\cdots,a_m)$, $(d_{k+1}',\cdots,d_m')=\tilde{g}_k(a_{k+1}^{-1},a_{k+2},\cdots,a_m)$, then we have $d_l=d_l'$ for $k+2 \leq l\leq m$. 
    Then 
    \begin{align*}
         \tilde{f}_k(a_1,\cdots,a_m)&= E_{[T\tau^{j_1}I_{i_1}]}(-a_1)\cdots E_{[T\tau^{j_k}I_{i_k}]}(-a_k) E_{[T\tau^{j_{k+1}}I_{i_{k+1}}]}(-\tilde{\varepsilon}_{k+1}a_{k+1}^{-1})\times \\
        & \times  n_{i_1}\cdots n_{i_{k+1}}   E_{[S_{i_{k+2}}]}(d_{k+2}a_{k+1}^{-A_{S_{i_{k+1}},S_{i_{k+2}}}}) \cdots E_{[S_{i_m}]}(d_ma_{k+1}^{-A_{S_{i_{k+1}},S_{i_m}}})\cdot \mathbf{B}^-.
    \end{align*}
    Thus 
    \begin{align*}
        \tilde{f}_k(a_1,\cdots,a_m)=\tilde{f}_{k+1}(a_1,\cdots,a_k,\tilde{\varepsilon}_{k+1}a_{k+1}^{-1},d_{k+2}a_{k+1}^{-A_{S_{i_{k+1}},S_{i_{k+2}}}},\cdots,d_ma_{k+1}^{-A_{S_{i_{k+1}},S_{i_m}}}).
    \end{align*}
    Define $\tilde{\varphi}_k:(\tilde{\varepsilon}_1\mathbb{R}_{>0})\times \cdots \times (\tilde{\varepsilon}_k\mathbb{R}_{>0})\times \mathbb{R}_{>0}^{m-k}\rightarrow (\tilde{\varepsilon}_1\mathbb{R}_{>0})\times \cdots \times (\tilde{\varepsilon}_{k+1}\mathbb{R}_{>0})\times \mathbb{R}_{>0}^{m-k-1}$ by 
    \begin{align*}
        \tilde{\varphi}_k(a_1,\cdots,a_m)=(a_1,\cdots,a_k,\tilde{\varepsilon}_{k+1}a_{k+1}^{-1},d_{k+2}a_{k+1}^{-A_{S_{i_{k+1}},S_{i_{k+2}}}},\cdots,d_ma_{k+1}^{-A_{S_{i_{k+1}},S_{i_m}}}).
    \end{align*}
    Then $\tilde{f}_k=\tilde{f}_{k+1}\tilde{\varphi}_k$.
    Recall that for $\varphi_k:(\varepsilon_1\mathbb{R}_{>0})\times \cdots \times (\varepsilon_k\mathbb{R}_{>0})\times \mathbb{R}_{>0}^{m-k}\rightarrow (\varepsilon_1\mathbb{R}_{>0})\times \cdots \times (\varepsilon_{k+1}\mathbb{R}_{>0})\times \mathbb{R}_{>0}^{m-k-1}$,
    we have 
    \begin{align*}
        &\varphi_k(a_1',\cdots,a_k',a_{k+1},\cdots,a_m)\\
        =&(a_1',\cdots,a_k',\varepsilon_{k+1}a_{k+1}^{-1},d_{k+2}a_{k+1}^{-A_{S_{i_{k+1}},S_{i_{k+2}}}},\cdots,d_ma_{k+1}^{-A_{S_{i_{k+1}},S_{i_m}}}).
    \end{align*}

    For any $(b_1,\cdots,b_m)\in \Omega_Q$, let $a_l^{(k)}$ be defined as in the proof of Theorem \ref{u=u}, and 
    \begin{align*}
        (\tilde{a}_1^{(m)},\cdots,\tilde{a}_m^{(m)})=(\tilde{\varepsilon}_1b_1,\cdots,\tilde{\varepsilon}_mb_m).
    \end{align*}
    Assume $\tilde{a}_l^{(k)}$ is defined for $1\leq l\leq m$, then clearly we have $\tilde{\varepsilon}_l a_l^{(k)}=\varepsilon_l \tilde{a}_l^{(k)}$ for $1\leq l\leq k$, and $\tilde{a}_l^{(k)}=a_l^{(k)}$ for $k+1\leq l \leq m$.
    Then 
    \begin{align*}
        0<\beta_k=b_k-\sum_{\substack{k+1<s\leq m\\i_s=i_k}}a_s^{(k)}=b_k-\sum_{\substack{k+1<s\leq m\\i_s=i_k}}\tilde{a}_s^{(k)}.
    \end{align*}
    By similar reasoning, we can define 
    \begin{align*}
        (\tilde{a}_1^{(k-1)},\cdots,\tilde{a}_m^{(k-1)})=\tilde{\varphi}_{k-1}^{-1}(\tilde{a}_1^{(k)},\cdots,\tilde{a}_m^{(k)})\in (\tilde{\varepsilon}_1\mathbb{R}_{>0})\times \cdots \times (\tilde{\varepsilon}_{k-1}\mathbb{R}_{>0})\times \mathbb{R}_{>0}^{m-k+1}
    \end{align*}
    and then proceed by induction.
    Finally, we have 
    \begin{align*}
        E_{[T\tau^{j_1}I_{i_1}]}(-\tilde{\varepsilon}_1b_1)\cdots  E_{[T\tau^{j_m}I_{i_m}]}(-\tilde{\varepsilon}_mb_m) \cdot \mathbf{B}^+ = E_{[S_{i_1}]}(a_1^{(0)})\cdots E_{[S_{i_m}]}(a_m^{(0)}) \cdot \mathbf{B}^-.
    \end{align*}
    By a proof completely analogous to that of Theorem \ref{u=u}, we have $\mathfrak{U}^-_{>0}=\mathbf{U}^-_{>0}$.
\end{proof}

Note that the characterization of $\mathbf{U}^+_{>0}$ and $\mathbf{U}^-_{>0}$ use the same region $\Omega_Q$, and the ordering of the indecomposables in $\mathfrak{U}^-_{>0}$ is exactly the $T$-shift of that in $\mathfrak{U}^+_{>0}$.

It's clear that
\begin{align*}
   \mathbf{H}_{>0}=\{ \prod_{i=1}^n h_i(t_i) \mid t_1,\cdots,t_n \in \mathbb{R}_{>0}  \}, 
\end{align*}
 and each element in $ \mathbf{G}_{>0}=\mathfrak{U}^+_{>0} \mathbf{H}_{>0} \mathfrak{U}^-_{>0}=\mathfrak{U}^-_{>0} \mathbf{H}_{>0} \mathfrak{U}^+_{>0}$ can be uniquely written of the form 
\begin{align*}
     E_{[\tau^{j_1}I_{i_1}]}(\varepsilon_1a_1)\cdots  E_{[\tau^{j_m}I_{i_m}]}(\varepsilon_ma_m) h_{[S_1]}(t_1)\cdots h_{[S_n]}(t_n)  E_{[T\tau^{j_1}I_{i_1}]}(-\tilde{\varepsilon}_1b_1)\cdots  E_{[T\tau^{j_m}I_{i_m}]}(-\tilde{\varepsilon}_mb_m)  
\end{align*}
or 
\begin{align*}
     E_{[T\tau^{j_1}I_{i_1}]}(-\tilde{\varepsilon}_1b_1')\cdots  E_{[T\tau^{j_m}I_{i_m}]}(-\tilde{\varepsilon}_mb_m')  h_{[S_1]}(t_1')\cdots h_{[S_n]}(t_n')   E_{[\tau^{j_1}I_{i_1}]}(\varepsilon_1a_1')\cdots  E_{[\tau^{j_m}I_{i_m}]}(\varepsilon_ma_m'),
\end{align*}
with $(a_1,\cdots,a_m),(a_1',\cdots,a_m'),(b_1,\cdots,b_m),(b_1',\cdots,b_m') \in \Omega_Q$, $t_1,\cdots,t_n,t_1',\cdots,t_n'\in\mathbb{R}_{>0}$.

\subsection{Description of $\mathbf{U}^+_{\geq 0}$, $\mathbf{U}^-_{\geq 0}$ and $\mathbf{G}_{\geq 0}$}
In this subsection, we will deal with the closures $\mathbf{U}^+_{\geq 0}$, $\mathbf{U}^-_{\geq 0}$ and $\mathbf{G}_{\geq 0}$ of $\mathbf{U}^+_{>0}$, $\mathbf{U}^-_{> 0}$ and $\mathbf{G}_{> 0}$, respectively.

The results in the previous subsection actually hold for any reduced expression for $\overline{n}_0$.
Precisely, let $(i_1,\cdots,i_m)$ be any reduced expression for $\overline{n}_0$.
Then the subset 
\begin{align*}
    \{ E_{[S_{i_1}]}(\varepsilon_1b_1) E_{[\omega_{S_{i_1}}(S_{i_2})]}(\varepsilon_2b_2)\cdots E_{[\omega_{S_{i_1}}\cdots\omega_{S_{i_{m-1}}}(S_{i_m})]}(\varepsilon_mb_m) \mid (b_1,\cdots,b_m)\in \Omega_{i_1,\cdots,i_m} \}
\end{align*}
coincides with $\mathbf{U}^+_{>0}$, and the subset 
\begin{align*}
    \{ E_{[TS_{i_1}]}(-\tilde{\varepsilon}_1b_1) E_{[T\omega_{S_{i_1}}(S_{i_2})]}(-\tilde{\varepsilon}_2b_2)\cdots E_{[T\omega_{S_{i_1}}\cdots\omega_{S_{i_{m-1}}}(S_{i_m})]}(-\tilde{\varepsilon}_mb_m) \mid (b_1,\cdots,b_m)\in \Omega_{i_1,\cdots,i_m} \}
\end{align*}
coincides with $\mathbf{U}^-_{>0}$, where 
\begin{align*}
    \tilde{\varepsilon}_k=\eta_{S_{i_{k-1}},S_{i_k}}\eta_{S_{i_{k-2}},\omega_{S_{i_{k-1}}}(S_{i_k})}\cdots \eta_{S_{i_1},\omega_{S_{i_2}}\cdots\omega_{S_{i_{k-1}}}(S_{i_k})}.
\end{align*}
The set $\mathfrak{U}^+_{>0}$ and $\mathfrak{U}^-_{>0}$ are special cases of these.

For any $n\in \mathbf{N}$, let $(i_{t+1},\cdots,i_m)$ be a reduced expression for $\overline{n}$.
There exists some $v\in \mathbf{N}$ such that $vn=n_0$.
Let $(i_1,\cdots,i_t)$ be a reduced expression for $\overline{v}$, then $(i_1,\cdots,i_m)$ is a reduced expression for $\overline{n}_0$.
For $t+1\leq k\leq m$, define  
\begin{align*}
    &\varepsilon_k^{\overline{n}}=\eta_{S_{i_{t+1}},\omega_{S_{i_{t+1}}}\cdots\omega_{S_{i_{k-1}}}(S_{i_k})} \cdots \eta_{S_{i_{k-1}},\omega_{S_{i_{k-1}}}(S_{i_k})},\\
    &\delta_k^{\overline{n}}=\eta_{S_{i_{1}},\omega_{S_{i_{1}}}\cdots\omega_{S_{i_{k-1}}}(S_{i_k})} \cdots \eta_{S_{i_{t}},\omega_{S_{i_t}}\cdots \omega_{S_{i_{k-1}}}(S_{i_k})},
\end{align*}
and we have $\varepsilon_k=\delta_k^{\overline{n}}\varepsilon_k^{\overline{n}}$.

Recall that for each $k$, $\beta_k(b_1,\cdots,b_m)$ can actually be determined by $b_k,\cdots,b_m$, so we may remove the irrelevant $b_i$'s in the expression and denote it, for example, by $\beta_k(b_k,\cdots,b_m)$.

\begin{definition}
    For a reduced expression $(i_{t+1},\cdots,i_m)$ of $\overline{n}$, define a region 
    \begin{align*}
        \Omega_{i_{t+1},\cdots,i_m}=\{ (b_{t+1},\cdots,b_m)\in \mathbb{R}_{>0}^{m-t} \mid \beta_k(b_{t+1},\cdots,b_m)>0, \forall k=t+1,\cdots,m \},
    \end{align*}
    which is the projection of $\Omega_{i_1,\cdots,i_m}$ onto the last $m-t$ components.
\end{definition}

Now we use this region to deal with $\mathbf{U}^-(\overline{n})$.

\begin{definition}
    Define a subset $\mathfrak{U}^-_{\mathfrak{B}}(\overline{n})$ of $\mathfrak{B}$ consisting of elements of the form 
\begin{align*}
  E_{[S_{i_{t+1}}]}(\varepsilon_{t+1}^{\overline{n}}b_{t+1}) E_{[\omega_{S_{i_{t+1}}}(S_{i_{t+2}})]}(\varepsilon_{t+2}^{\overline{n}}b_{t+2})  \cdots E_{[\omega_{S_{i_{t+1}}}\cdots\omega_{S_{i_{m-1}}}(S_{i_m})]}(\varepsilon_m^{\overline{n}}b_m)n_{i_{t+1}}^{-1}\cdots n_{i_m}^{-1}\cdot \mathbf{B}^+ 
\end{align*}
with $ (b_{t+1},\cdots,b_m)\in \Omega_{i_{t+1},\cdots,i_m} $.
\end{definition}

\begin{proposition}
    The two subsets $\mathfrak{U}^-_{\mathfrak{B}}(\overline{n})$ and $\mathbf{U}^-(\overline{n})\cdot \mathbf{B}^+$ of $\mathfrak{B}$ are equal.

    Moreover, for each $(b_{t+1},\cdots,b_m)\in \Omega_{i_{t+1},\cdots,i_m}$, there exists a unique element $\mathbf{b}(b_{t+1},\cdots,b_m)$ in $\mathbf{B}^+$, such that the set of elements
    \begin{align*}
        E_{[S_{i_{t+1}}]}(\varepsilon_{t+1}^{\overline{n}}b_{t+1})  \cdots E_{[\omega_{S_{i_{t+1}}}\cdots\omega_{S_{i_{m-1}}}(S_{i_m})]}(\varepsilon_m^{\overline{n}}b_m)n_{i_{t+1}}^{-1}\cdots n_{i_m}^{-1} \mathbf{b}(b_{t+1},\cdots,b_m) 
    \end{align*}
    with $ (b_{t+1},\cdots,b_m)\in \Omega_{i_{t+1},\cdots,i_m} $ coincides with $\mathbf{U}^-(\overline{n})$.
\end{proposition}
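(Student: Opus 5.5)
The plan is to run the argument of Proposition \ref{fk} and Theorem \ref{u=u} again, but only on the tail $(i_{t+1},\cdots,i_m)$ of the reduced expression $(i_1,\cdots,i_m)$ of $\overline{n}_0$.

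\textbf{Step 1: the maps $f_k^{\overline{n}}$.} For $t\leq k\leq m$ I define $f_k^{\overline{n}}$ exactly as $f_k$, with the index set restricted to $t+1,\cdots,m$. That is, $f_k^{\overline{n}}$ is the product of the $E_{[\omega_{S_{i_{t+1}}}\cdots\omega_{S_{i_{l-1}}}(S_{i_l})]}(a_l)$ for $t<l\leq k$, followed by $n_{i_{t+1}}^{-1}\cdots n_{i_k}^{-1}$, then the factors $E_{[TS_{i_l}]}(-a_l)$ for $l>k$, all acting on $\mathbf{B}^+$. Then $f_t^{\overline{n}}(a_{t+1},\cdots,a_m)=E_{[TS_{i_{t+1}}]}(-a_{t+1})\cdots E_{[TS_{i_m}]}(-a_m)\cdot\mathbf{B}^+$, and $f_m^{\overline{n}}$ is the coset appearing in the definition of $\mathfrak{U}^-_{\mathfrak{B}}(\overline{n})$.

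\textbf{Step 2: the step identity.} The computation of Proposition \ref{fk} uses only Lemma \ref{conjugate}, the identity for $E_{[TX]}(-t)$, and the rank-one relation. So it gives $f_k^{\overline{n}}=f_{k+1}^{\overline{n}}\varphi_k^{\overline{n}}$, where $\varphi_k^{\overline{n}}$ is $\varphi_k$ with the signs $\varepsilon_l$ replaced by $\varepsilon_l^{\overline{n}}$. The formulas (3),(4) of $\varphi_k$ only involve indices $s>k$. Hence the positive coordinates transform exactly as before: the functions $a_l^{(k)}$ for $l>k$, and so the $\beta_k$ for $k\geq t+1$, are the same as for the full word. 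This is consistent with Lemma \ref{determine} and with $\beta_k$ depending only on $b_k,\cdots,b_m$. The signed coordinates differ only by the fixed factors $\delta_l^{\overline{n}}$.

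By Lemma \ref{varphikbij}, the composite $\varphi_{m-1}^{\overline{n}}\cdots\varphi_t^{\overline{n}}$ is therefore a bijection from $\mathbb{R}_{>0}^{m-t}$ onto the set of sign-twisted points $(\varepsilon^{\overline{n}}_{t+1}b_{t+1},\cdots,\varepsilon^{\overline{n}}_m b_m)$ with $(b_{t+1},\cdots,b_m)\in\Omega_{i_{t+1},\cdots,i_m}$. Since $\mathbf{U}^-(\overline{n})=\{E_{[TS_{i_{t+1}}]}(-a_{t+1})\cdots E_{[TS_{i_m}]}(-a_m)\mid a_l>0\}$, this gives the first claim, $\mathfrak{U}^-_{\mathfrak{B}}(\overline{n})=\mathbf{U}^-(\overline{n})\cdot\mathbf{B}^+$.

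\textbf{Step 3: the element $\mathbf{b}$.} Given $(b_{t+1},\cdots,b_m)\in\Omega_{i_{t+1},\cdots,i_m}$, let $x=E(\cdots)n_{i_{t+1}}^{-1}\cdots n_{i_m}^{-1}$ and let $y=E_{[TS_{i_{t+1}}]}(-a_{t+1})\cdots E_{[TS_{i_m}]}(-a_m)$ be the corresponding element of $\mathbf{U}^-(\overline{n})$. Equality of Borel subgroups gives $x\mathbf{B}^+x^{-1}=y\mathbf{B}^+y^{-1}$, so $x^{-1}y$ lies in the normalizer of $\mathbf{B}^+$. I will take $\mathbf{B}^+$ to be self-normalizing; this follows from the Bruhat decomposition, since $\mathbf{B}^+n\mathbf{B}^+$ normalizes $\mathbf{B}^+$ only when $\overline{n}=1$. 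Then $\mathbf{b}:=x^{-1}y\in\mathbf{B}^+$, and it is unique because $x\mathbf{b}=y$ determines it. As $(b_l)$ runs over the region, $y$ runs over all of $\mathbf{U}^-(\overline{n})$, which gives the second claim.

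\textbf{Main obstacle.} The delicate part is Step 2. One has to check that restricting to the tail gives exactly the functions $\beta_k(b_k,\cdots,b_m)$ of the full word, i.e. that the region is literally the projection and not merely homeomorphic to it. One also has to check that the sign bookkeeping ($\varepsilon_k=\delta_k^{\overline{n}}\varepsilon_k^{\overline{n}}$) is consistent with conjugating by $n_{i_{t+1}}^{-1}\cdots n_{i_k}^{-1}$ rather than starting from $n_{i_1}$. I would verify this by induction on $k$ from $m$ down to $t$, comparing term by term with the proof of Proposition \ref{fk}.
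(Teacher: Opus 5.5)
Your proposal is correct and is essentially the paper's argument. The paper gets the tail identity by running the full-word chain $f_m=\cdots=f_t$ from Proposition \ref{fk} and cancelling the common prefix $E(\cdots)\,n_{i_1}^{-1}\cdots n_{i_t}^{-1}$ via $\varepsilon_k=\delta_k^{\overline{n}}\varepsilon_k^{\overline{n}}$, whereas you rerun the same computation directly on the tail; the two are equivalent because the positive coordinates $a_l^{(k)}$, $l>k$, never involve the signs. Your normalizer argument for $\mathbf{b}$ makes explicit what the paper compresses into ``the correspondence is bijective''. For uniqueness in the strong sense you would also use $\mathbf{U}^-\cap\mathbf{B}^+=\{1\}$, so that distinct elements of $\mathbf{U}^-(\overline{n})$ give distinct Borel subgroups.
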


\begin{proof}
    For $(b_1,\cdots,b_m)\in \mathbb{R}_{>0}^m$, let $(a_1^{(m)},\cdots,a_m^{(m)})=(\varepsilon_1b_1,\cdots,\varepsilon_mb_m)$.
We have 
\begin{align*}
    & E_{[S_{i_1}]}(a_1^{(m)})\cdots E_{[\omega_{S_{i_1}}\cdots\omega_{S_{i_{m-1}}}(S_{i_m})]}(a_m^{(m)})n_{i_1}^{-1}\cdots n_{i_m}^{-1}\cdot \mathbf{B}^+\\
   =& E_{[S_{i_1}]}(a_1^{(m)})\cdots E_{[\omega_{S_{i_1}}\cdots\omega_{S_{i_{t-1}}}(S_{i_t})]}(a_t^{(m)})n_{i_1}^{-1}\cdots n_{i_t}^{-1} \times \\
   & \times E_{[S_{i_{t+1}}]}(\delta_{t+1}^{\overline{n}}a_{t+1}^{(m)})\cdots E_{[\omega_{S_{i_{t+1}}}\cdots\omega_{S_{i_{m-1}}}(S_{i_m})]}(\delta_m^{\overline{n}}a_m^{(m)}) n_{i_{t+1}}^{-1}\cdots n_{i_m}^{-1}   \cdot \mathbf{B}^+
\end{align*}
Assume that $(b_{t+1},\cdots,b_m)\in \Omega_{i_{t+1},\cdots,i_m}$. 
Using the notations as in the previous subsection, we have $a_{l}^{(k)}$ well-defined, for $1\leq l\leq m$ and $t\leq k\leq m$.
Since $\delta_k^{\overline{n}}a_k^{(m)}=\delta_k^{\overline{n}}\varepsilon_kb_k=\varepsilon_k^{\overline{n}}b_k$ for $t+1\leq k \leq m$, we have 
\begin{align*}
          &E_{[S_{i_{t+1}}]}(\varepsilon_{t+1}^{\overline{n}}b_{t+1})  \cdots E_{[\omega_{S_{i_{t+1}}}\cdots\omega_{S_{i_{m-1}}}(S_{i_m})]}(\varepsilon_m^{\overline{n}}b_m)n_{i_{t+1}}^{-1}\cdots n_{i_m}^{-1} \cdot \mathbf{B}^+ \\
    =&E_{[TS_{i_{t+1}}]}(-a_{t+1}^{(t)})\cdots E_{[TS_{i_m}]}(-a_m^{(t)}) \cdot \mathbf{B}^+.
\end{align*}
Note that $(a_{t+1}^{(t)},\cdots,a_m^{(t)})\in \mathbb{R}_{>0}^{m-t}$, so $E_{[TS_{i_{t+1}}]}(-a_{t+1}^{(t)})\cdots E_{[TS_{i_m}]}(-a_m^{(t)})\in \mathbf{U}^-(\overline{n})$.
We have  $\mathfrak{U}^-_{\mathfrak{B}}(\overline{n}) \subseteq \mathbf{U}^-(\overline{n})\cdot \mathbf{B}^+$.
On the other hand, by Lemma \ref{varphikbij}, we have $\mathbf{U}^-(\overline{n})\cdot \mathbf{B}^+  \subseteq \mathfrak{U}^-_{\mathfrak{B}}(\overline{n})$, and thus these two sets are equal.

Since the correspondence between
\begin{align*}
    (b_{t+1},\cdots,b_m)\in \Omega_{i_{t+1},\cdots,i_m}\quad \text{and} \quad (a_{t+1}^{(t)},\cdots,a_m^{(t)})\in \mathbb{R}_{>0}^{m-t}
\end{align*}
 is bijective, the second assertion follows.
\end{proof}

Denote the set of elements of the form
\begin{align*}
        E_{[S_{i_{t+1}}]}(\varepsilon_{t+1}^{\overline{n}}b_{t+1})  \cdots E_{[\omega_{S_{i_{t+1}}}\cdots\omega_{S_{i_{m-1}}}(S_{i_m})]}(\varepsilon_m^{\overline{n}}b_m)n_{i_{t+1}}^{-1}\cdots n_{i_m}^{-1} \mathbf{b}(b_{t+1},\cdots,b_m), 
\end{align*}
with $ (b_{t+1},\cdots,b_m)\in \Omega_{i_{t+1},\cdots,i_m} $ by $\mathfrak{U}^-(\overline{n})$.
The previous Proposition tells that $\mathfrak{U}^-(\overline{n})=\mathbf{U}^-(\overline{n})$ and $\mathfrak{U}^-(\overline{n})\cdot \mathbf{B}^+ = \mathfrak{U}^-_{\mathfrak{B}}(\overline{n})$.
We can similarly define $\mathfrak{U}^+(\overline{n})$ with analogous properties.

Finally, we obtain the following theorem.

\begin{theorem}
    We have 
    \begin{align*}
   &\mathbf{U}^+_{\geq 0}=\bigsqcup_{\overline{n}\in \mathbf{N}/\mathbf{H}} \mathfrak{U}^+(\overline{n}), \qquad 
    \mathbf{U}^-_{\geq 0}=\bigsqcup_{\overline{n}\in \mathbf{N}/\mathbf{H}} \mathfrak{U}^-(\overline{n}),\\
    &\mathbf{G}_{\geq 0}=\bigsqcup_{\overline{n},\overline{n}'\in \mathbf{N}/\mathbf{H}} \mathfrak{U}^+(\overline{n})\mathbf{H}_{>0}\mathfrak{U}^-(\overline{n}')=\bigsqcup_{\overline{n},\overline{n}'\in \mathbf{N}/\mathbf{H}}\mathfrak{U}^-(\overline{n}')\mathbf{H}_{>0}\mathfrak{U}^+(\overline{n}).
\end{align*}
\end{theorem}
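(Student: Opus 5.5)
The plan is to reduce the theorem to the analogue of Proposition \ref{uw}(4) and Lemma \ref{utu} for $\mathbf{G}$, and then substitute the sets $\mathfrak{U}^{\pm}(\overline{n})$ for $\mathbf{U}^{\pm}(\overline{n})$ using the preceding Proposition.

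We first recall that Proposition \ref{uw} holds with $\mathbf{U}^+(\overline{n})$ in place of $U^+(w)$, and similarly for $\mathbf{U}^-(\overline{n})$. In particular this gives
\begin{align*}
\mathbf{U}^-_{\geq 0}=\bigsqcup_{\overline{n}\in \mathbf{N}/\mathbf{H}}\mathbf{U}^-(\overline{n}), \qquad \mathbf{U}^+_{\geq 0}=\bigsqcup_{\overline{n}\in \mathbf{N}/\mathbf{H}}\mathbf{U}^+(\overline{n}).
\end{align*}
Here disjointness comes from part (3), which in turn rests on the inclusion $\mathbf{U}^-(\overline{n})\subseteq \mathbf{B}^+ n\mathbf{B}^+$ and the Bruhat decomposition.

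The previous Proposition gives $\mathfrak{U}^-(\overline{n})=\mathbf{U}^-(\overline{n})$. Substituting this into the decomposition of $\mathbf{U}^-_{\geq 0}$ yields the second equality of the theorem.

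For $\mathbf{U}^+_{\geq 0}$ we define $\mathfrak{U}^+(\overline{n})$ by the mirror construction. We use the maps $\tilde f_k$ from the proof for $\mathbf{U}^-_{>0}$, the signs $\tilde{\varepsilon}$ restricted to the tail $(i_{t+1},\dots,i_m)$, and the element $n_{i_{t+1}}\cdots n_{i_m}$ acting on $\mathbf{B}^-$. The same argument as in the previous Proposition then shows $\mathfrak{U}^+(\overline{n})\cdot\mathbf{B}^-=\mathbf{U}^+(\overline{n})\cdot\mathbf{B}^-$. One subtlety is that the region $\Omega_{i_{t+1},\dots,i_m}$ is independent of the chosen extension $(i_1,\dots,i_t)$. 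This holds because each $\beta_k$ depends only on $b_k,\dots,b_m$ (Lemma \ref{determine}) and on the indices $i_k,\dots,i_m$. The signs $\varepsilon^{\overline n}_k$ involve only $i_{t+1},\dots,i_m$. Putting these together gives the first equality.

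For $\mathbf{G}_{\geq 0}$ we apply Lemma \ref{utu} in its $\mathbf{G}$-version, as stated after the definition of $\mathbf{G}_{\geq 0}$. Every $g$ has a unique factorization $g=u^+hu^-$ with $u^\pm\in\mathbf{U}^\pm_{\geq 0}$ and $h\in\mathbf{H}_{>0}$. Inserting the two disjoint decompositions already obtained gives
\begin{align*}
\mathbf{G}_{\geq 0}=\bigcup_{\overline n,\overline n'}\mathfrak{U}^+(\overline n)\mathbf{H}_{>0}\mathfrak{U}^-(\overline n').
\end{align*}
This union is disjoint because of the uniqueness in Lemma \ref{utu} combined with the disjointness of the $\mathfrak{U}^{\pm}(\overline n)$. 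The version with the factors in the order $u^-hu^+$ follows in the same way from Lemma \ref{utu}(2).

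The main obstacle is the definition of $\mathfrak{U}^+(\overline n)$ and the verification that it equals $\mathbf{U}^+(\overline n)$. The paper only asserts that $\mathfrak{U}^+(\overline n)$ has "analogous properties". Our approach is to rerun the $\tilde f_k$ computation truncated at $k=t$. We then check that the inverse maps $\tilde\varphi_k^{-1}$ produce positive tail coordinates exactly when $\beta_k>0$ for $k>t$, by the same argument as in Lemma \ref{varphikbij}. Finally, the uniqueness of the correcting element of $\mathbf{B}^-$ comes from the uniqueness of the Bruhat expression.
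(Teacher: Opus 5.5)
Your proposal is correct and follows essentially the paper's route. The paper gives no separate proof: it reads the theorem off from $\mathfrak{U}^{\pm}(\overline{n})=\mathbf{U}^{\pm}(\overline{n})$ together with the $\mathbf{G}$-versions of Proposition \ref{uw}(3),(4) and Lemma \ref{utu}, and your truncated $\tilde f_k$ construction of $\mathfrak{U}^+(\overline{n})$ simply spells out what the paper leaves as ``analogous properties''.
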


\subsection*{Acknowledgments}
We would like to thank Jiepeng Fang and Yixin Lan for their discussion.
The authors were partially supported by National Natural Science Foundation of China [Grant No. 12471030].

\end{document}